\newcommand{\overbar}[1]{\mkern 1.5mu\overline{\mkern-1.5mu#1\mkern-1.5mu}\mkern 1.5mu}
\newcommand{\sub}{\subseteq}
\newcommand*\interior[1]{#1^{\mathsf{o}}}
\newcommand{\pks}{P_{k_s}}
\newcommand{\pksi}{P_{k_{s_i}}}
\newcommand{\gvalue}[1]{|#1|_G}
\newcommand{\pzbk}{P_{ZB_k}}
\newcommand{\tildeqi}{\Tilde{Q}_i^\epsilon}
\newcommand {\Odd}{\Omega_{\delta_1,\delta_2}}
\newcommand {\Qdd}{Q_{\delta_1,\delta_2}}
\newcommand{\tqil}{\Tilde{Q}_i^\lambda}
\def \R {{\mathbb {R}}}
\def \O {\Omega}
\def \N {{\mathbb{N}}}
\def \H {\mathcal{H}}
\def \L {\mathcal{L}}
\def \l {\lambda}
\def \ks {k_s}
\def \remark {\textbf{Remark. }}
\def \slim { \lim_{s\to1^-}}
\def \a {\alpha}
\def \F {\mathcal{F}}
\def \d {\delta}
\def \P {\Pi}
\def \e {\epsilon}
\def \dpe {\partial _\epsilon \Pi}
\def \dpej {\partial_\epsilon \Pi_j}
\def \o {\omega}
\def \I {\mathcal{I}}
\def \Q {\mathbb{Q}}
\def \r {\rho}
\newtheorem{theorem}{Theorem}[section]
\newtheorem*{theorem*}{Theorem}
\newtheorem{lemma}[theorem]{Lemma}
\newtheorem{proposition}[theorem]{Proposition}
\newtheorem{definition}[theorem]{Definition}
\newtheorem{corollary}[theorem]{Corollary}
\title{Gamma-convergence as $s\to1^-$ of anisotropic nonlocal fractional perimeter functionals}
\author{Alberto Fanizza}
\address{Alberto Fanizza, Institute of Analysis and Scientific Computing, TU Wien, Wiedner Hauptstraße 8-10, 1040 Vienna, Austria.}
\email{alberto.fanizza@asc.tuwien.ac.at}
\subjclass[2020]{49Q20, 52A38, 28A75}
\keywords{Fractional perimeter, nonlocal perimeter, anisotropic perimeter, $\Gamma$-convergence}
\begin{document}

\maketitle

\begin{abstract}
    We investigate the asymptotic behavior in the sense of $\Gamma(L^1_{loc})$-convergence as $s\to1^-$ of anisotropic non local $s$-fractional perimeters defined with respect to general anisotropic integration kernels $k_s(\cdot)$, under the hypothesis of pointwise convergence of such kernels. In particular, we prove the $\Gamma(L^1_{loc})$-convergence as $s\to1^-$ of the rescaled anisotropic nonlocal $s$-fractional perimeters defined with respect to the kernels $k_s(\cdot)$ to a suitable anisotropic perimeter. We do so both in $\R^n$ and on a bounded domain $\O\sub\R^n$ with Lipschitz boundary.
\end{abstract}

\tableofcontents

\section{Introduction}

Nonlocal perimeter functionals were first introduced in \cite{CafSav} by L. Caffarelli, J-.M. Roquejoffre and O. Savin in the context of the analysis of motion of interfaces in multi-phase systems when also long range interactions are accounted for. In particular, motion of surfaces by mean curvature flow can be seen as a short step limit of the scheme proposed by J. Bence, B. Merriman and S. Osher (\cite{MerBenOsh}), which can be summarized in the way that follows. Let $\O_k\sub\R^n$ be an open set with smooth boundary at a time $t_k$ and denote by $S_k=\partial\O_k$. The surface $S_{k+1}=\partial\O_{k+1}$ at a time $t_{k+1}=t_k+\delta$, where $\delta>0$ is a fixed time step, is obtained by finding the solution at a time $\e>0$ of the initial value problem for the classical heat equation:
\begin{equation*}
    \begin{cases}
        u_t(\cdot,t)& = \Delta u(\cdot,t)\\
        u(\cdot,0) &= u_k(\cdot)\,,
    \end{cases} 
\end{equation*}
where $u_k=\chi_{\O_k}-\chi_{\O_k^c}$, and then taking $\O_{k+1}=\{u(\cdot,\e)>0\}$. If $\delta=\mathcal{O}(\e^2)$ it was proved by L.C. Evans in \cite{LCE93} that this scheme provides a discrete approximation of motion by mean curvature for small time steps $\delta\to0^+$.
Nonlocal effects can be introduced in this scheme by substituting the classical Laplacian in the diffusion equation with the fractional Laplacian $(-\Delta)^\sigma$, $\sigma\in(0;1)$. When $\sigma<\frac{1}{2}$ this process converges for short time steps to a nonlocal mean curvature flow (cf. \cite{CS08}), whose stationary sets, referred as nonlocal minimal surfaces, minimize the Gagliardo seminorm:
\begin{equation*}
    [\chi_{\O}]_{H^\sigma(\R^n)}=\int_{\R^n}\int_{\R^n}\frac{\abs{\chi_\O(x)-\chi_\O(y)}^2}{\abs{x-y}^{n+2\sigma}}dxdy=2\int_\O\int_{\O^c}\frac{1}{\abs{x-y}^{n+2\sigma}}dxdy.
\end{equation*}
For $s\in(0;1)$ and Lebesgue measurable sets $A,B\sub\R^n$ let us define the following \textit{locality defect}:
\begin{equation}
    \label{locdef}
    \I_s(A,B):=\int_A\int_B\frac{1}{\abs{x-y}^{n+s}}dxdy
\end{equation}
Let $\O\sub\R^n$ be either a bounded domain with Lipschitz boundary or the whole space $\R^n$ and let $E\sub\R^n$ be a Lebesgue measurable set. The \textit{nonlocal $s$-fractional perimeter} of $E$ in $\O$ is defined as:
\begin{equation}
    \label{fracperdef}
        P_s(E,\O):=P_s^1(E,\O)+P_s^2(E,\O)
\end{equation}
        Where:
\begin{equation*}
    \begin{split}
        P_s^1(E,\O):=&\I_s(E\cap\O,E^c\cap\O)\\
        P_s^2(E,\O):=&\I_s(E\cap\O,E^c\cap\O^c)+\I_s(E^c\cap\O,E\cap\O^c).
    \end{split}
\end{equation*}
\remark In the case $\O=\R^n$ the quantity $P_s^2(E,\R^n)$ vanishes (since both integral contribution are performed on an empty integration domain), leaving us with $P_s(E,\O)=P_s^1(E,\O)$.\\
It is generally said that $P^1_s(E,\O)$ measures the $s$-perimeter of $E$ inside $\O$, since the double integration happens across the portion of the boundary of $E$ which lies inside $\O$, while $P^2_s(E,\O)$ represents the contribution of the $s$-perimeter of $E$ on the boundary of $\O$, the double integration taking place, in this case, across both the boundaries of $E$ and $\O$. This type of set functional can be seen as a $(n-s)$-dimensional perimeter measure in the sense that it is homogeneous of degree $n-s$ and it verifies a suitable coarea formula (cf. \cite{Visintincoarea}). Namely, for any $\lambda>0$ it holds that:
\begin{equation*}
    P_s(\l E,\l \O)=\l^{n-s}P_s(E,\O),
\end{equation*}
and  for any function $u\in W^{s,1}(\O)$ it holds that:
\begin{equation*}
    \frac{1}{2}[u]_{W^{s,1}(\O)}=\int_\R P_s(\{u>t\},\O)\,dt,
\end{equation*}
where $[u]_{W^{s,1}(\O)}$ denotes the Gagliardo seminorm of $u$ in the fractional Sobolev space $W^{s,1}(\O)$.\\
Perimeter functionals of this type are nonlocal in the sense that their value does not only depend of the shape of $E$ in $\O$. As for the terminology \textit{locality defect}, this originates from the fact that, roughly speaking, $\I_s(\cdot,\cdot)$ quantifies how far the functional $P^1_s(E,\cdot)$ is from being additive with respect to union of disjoint sets in the second variable.  Indeed, let $\O_1,\O_2\sub\R^n$ be two disjoint bounded domains in $\R^n$. Then:
\begin{equation*}
    P^1_s(E,\O_1\cup\O_2)=P^1_s(E,\O_1)+P^1_s(E,\O_2)+\I_s(E\cap\O_1, E^c\cap\O_2)+
    \I_s(E\cap\O_2,E^c\cap\O_1).
\end{equation*}

The functionals $P_s(E,\O)$ are isotropic, meaning that for every $R\in SO(n)$ it holds that $P_s(RE,R\O)=P_s(E,\O)$. This is clearly due to the fact that the integration kernels, defined by means of the Euclidean norm in $\R^n$ are rotation invariant.  A possible way of introducing anisotropy in these functionals is to substitute the Euclidean norm in the integration kernel by an anisotropic norm (c.f. \cite{Ludwig}). Let $K\sub\R^n$ be a convex body symmetric with respect to the origin and let
\begin{equation}
    \label{anisonorm}
    \norm{v}_X:=\inf\left\{r>0:\frac{x}{r}\in K \right\}
\end{equation}
be the anisotropic norm on $\R^n$ induced by $K$. For any $A,B\sub\R^n$ let us define the  \textit{anisotropic locality defect} with respect to $K$ as:
\begin{equation}
    \label{anisoklocdef}
    \I_{s,K}(A,B):=\int_A\int_B\frac{1}{\norm{x-y}_K^{n+s}}dxdy.
\end{equation}
For $s$, $E$, $\O$ as above the \textit{anisotropic nonlocal $s$-fractional perimeter} of $E$ in $\O$ with respect to $K$ is defined in the natural way:
\begin{equation}
    \label{fracperdef}
        P_{s,K}(E,\O):=P_{s,K}^1(E,\O)+P_{s,K}^2(E,\O)
\end{equation}
        Where:
\begin{equation*}
    \begin{split}
        P_{s,K}^1(E,\O):=&\I_{s,K}(E\cap\O,E^c\cap\O)\\
        P_{s,K}^2(E,\O):=&\I_{s,K}(E\cap\O,E^c\cap\O^c)+\I_{s,K}(E^c\cap\O,E\cap\O^c).
    \end{split}
\end{equation*}
The functionals $P_{s,K}(\cdot,\O)$ represent a nonlocal $s$-fractional counterpart of the classical anisotropic perimeter. If $\O$ and $K$ are as above and $E$ is a set of finite perimeter in the sense of Caccioppoli, the \textit{anisotropic perimeter} of $E$ in $\O$ with respect to $K$ is defined as:
\begin{equation}
    P_K(E,\O):=\int_{\partial^*E\cap\O}\norm{\nu_E}_{K^*}d\H^{n-1},
\end{equation}
where $\partial^*E$ denotes the the reduced boundary of $E$, $\nu_E$ is the measure theoretic outward pointing unit normal vector to $\partial^*E$ and $K^*$ is the polar body of $K$:
\begin{equation*}
    K^*=\left\{ v\in\R^n:x\cdot v\leq 1 \textit{ for all }\, x\in K \right\}.
\end{equation*}

\subsection{Previous contributions}
Many different properties of nonlocal fractional perimeters where investigated by a variety of authors since their introduction. As mentioned above, in \cite{CafSav} local minimizers of the functionals $P_s(\cdot,\O)$ are studied. A Lebesgue measurable set $E$ is said to be a local minimizer for $P_s(\cdot,\O)$ (or equivalently an $s$-minimal surface in $\O$) if for every Lebesgue measurable set $F\sub\R^n$ such that $E\Delta\F\Subset\O$ it holds that:
\begin{equation*}
    P_s(E,\O)\leq P_s(F,\O).
\end{equation*}
In \cite{CafSav} it was proved, among other results, that the Euler-Lagrange equation in the viscosity sense satisfied by a local minimizer $E\sub\R^n$ of $P_s(\cdot,\O)$ is given by $\Delta^{s/2}(\chi_E-\chi_{E^c})=0$ on $\partial E\cap\O$ and that $\partial E\cap \O$ is a $C^{1,\a}$ hypersurface, $\a<s$, up to a set of Hausdorff dimension at most $n-2$. Moreover, as a consequence of the Euler-Lagrange equation, it also follows that hyperplanes are $s$-minimal. Since the $C^{1,\a}$ regularity of $s$-minimal surfaces is far from the one that we have for classical minimal surfaces (see e.g. \cite{Giusti}), it was supposed that this regularity result might not be optimal for $s\to1^-$ and, alongside, the question arose weather the classical theory for minimal surfaces (ideally corresponding to the case $s=1$) could be recovered by letting $s\to1^-$. Asymptotic properties of $P_s(E,\O)$ as $s\to1^-$ were extensively studied from a geometric point of view by L. Caffarelli and E. Valdinoci (\cite{CafVald}, \cite{CV13}). In \cite{CafVald} it was proved that nonlocal $s$-minimal surfaces approach classical minimal surfaces as $s\to1^-$ and that, up to rescaling by a factor $(1-s)$, the nonlocal fractional $s$-perimeters converge to the classical perimeter measure.  These results were obtained by means of uniform density estimates and clean ball conditions as $s\to1^-$ proved in \cite{CafSav}. In \cite{SV13} O.Savin and E.Valdinoci were able to improve the bound on the Hausdorff dimension of the singular set from $n-2$ to $n-3$. In \cite{BFV14}, B. Barrios, A. Figalli and E. Valdinoci improved regularity outside the singular set from $C^{1,\a}$ to $C^\infty$. This result, combined with the ones in \cite{CV13}, gives that, for $s$ sufficiently close to $1$, $s$-minimal surfaces are $C^\infty$ outside a closed set of Hausdorff dimension at most $n-8$, thus recovering the classical regularity result. In \cite{Ambrosio}, L.Ambrosio, G. De Philippis and L. Martinazzi studied the problem from the point of view of $\Gamma$-convergence, proving that the rescaled functionals $(1-s)P_s(\cdot,\O)$, in the case of $\O\sub\R^n$ bounded domain with Lipschitz boundary, $\Gamma$-converge as $s\to1^-$ with respect to the $L^1_{loc}$ topology of $\R^n$ to $\omega_{n-1}P(\cdot,\O)$, where $P(\cdot,\O)$ denotes the classical De Giorgi's perimeter in $\O$ and $\o_{n-1}$ is the volume of the unit ball in $\R^{n-1}$. Thanks to this result, they were able to prove convergence of local minimizers of $P_s(\cdot,\O)$ to local minimizers of the classical perimeter without relying on the density estimates provided in \cite{CafSav}. For completeness, we mention that also the asymptotic properties as $s\to0+$ of $P_s(\cdot,\O)$ were studied in detail (c.f. \cite{DFPV13}).\\
As for the anisotropic case, convergence properties of the functionals $P_{s,K}(\cdot,\R^n)$ both in the case $s\to0^+$ and $s\to1^-$ were investigated by M. Ludwig in \cite{Ludwig}. In particular, it is shown there that for any Borel set $E\sub\R^n$ it holds that:
\begin{equation*}
    \lim_{s\to1^-}(1-s)P_{s,K}(E,\R^n)=P_{ZK}(E,\R^n),
\end{equation*}
where $ZK\sub\R^n$ is the \textit{moment body} of $K$, that's to say the convex body in $\R^n$ defined by the following norm associated to its polar $Z^*K$:
\begin{equation*}
    \norm{v}_{Z^*K}:=\frac{n+1}{2}\int_K\abs{v\cdot x}dx.
\end{equation*}
This result is obtained by M.Ludwig via an integral-geometric argument based on the use of a Blaschke-Petkantschin type formula. Moreover, partially exploiting the $\Gamma$-liminf inequality proved in \cite{Ambrosio} in the isotropic case, she was also able to prove $\Gamma$-convergence with respect to the $L^1_{loc}$ topology of $\R^n$. In particular, the use of Blaschke-Petkantschin formula allows her to go back from the $n$-dimensional case to the $1$-dimensional case, in which pointwise convergence can be proved directly via explicit computations. Borrowing then the $\Gamma$-liminf inequality for the $1$-dimensional case by the general result in \cite{Ambrosio}, $\Gamma$-convergence is established.

\subsection{Setting and main results of the paper}
The aim of this paper is to investigate the $\Gamma$-convergence properties of anisotropic nonlocal $s$-fractional perimeters as $s\to1^-$ in bounded domains with Lipschitz boundary by means of combining the approach used by L. Ambrosio, G. De Philippis and G. Martinazzi in \cite{Ambrosio} with the technique used by Ludwig in \cite{Ludwig}. For $s\in(0,1)$, we consider generalized anisotropic integration kernels of the form of measurable functions $k_s(\cdot):\R^n\setminus\{0\}\to\R_+$ which fulfill the following properties:
\begin{enumerate}[label=\textit{h}.\arabic*, start=0]
    \item \label{h0} for every $z\neq0$:
    \begin{equation*}
        k_s(-z)=k_s(z)
    \end{equation*}
    \item \label{h1} for every $\lambda>0$ and $z\neq 0$:
    \begin{equation*}
        \ks(\l z)=\l^{-n-s}\ks(z).
    \end{equation*}
    \item \label{h2} there exists a constant $c\geq1$ such that for every $s\in (0,1)$ and $z\neq 0$:
    \begin{equation*}
        \frac{1}{c}\frac{1}{\abs{z}^{n+s}}\leq\ks(z)\leq c \frac{1}{\abs{z}^{n+s}}.
    \end{equation*}
\end{enumerate}
In complete analogy of the cases presented above, for $A,B\sub\R^n$ Lebesgue measurable sets we define the \textit{anisotropic $k_s$-locality defect} as:
\begin{equation}
    \I_{k_s}(A,B):=\int_A\int_B k_s(x-y)dxdy.
\end{equation}
For $E\sub\R^n$ a Lebesgue measurable set and $\O\sub\R^n$ either a bounded domain with Lipschitz boundary or the whole space $\R^n$, we define the anisotropic nonlocal perimeter of $E$ in $\O$ with respect to $k_s$ in the natural way:
\begin{equation}
    \label{fracperdef}
        \pks(E,\O):=\pks^1(E,\O)+\pks^2(E,\O)
\end{equation}
        Where:
\begin{equation*}
    \begin{split}
        \pks^1(E,\O):=&\I_{k_s}(E\cap\O,E^c\cap\O)\\
        \pks^2(E,\O):=&\I_{k_s}(E\cap\O,E^c\cap\O^c)+\I_{k_s}(E^c\cap\O,E\cap\O^c).
    \end{split}
\end{equation*}
We point out that hypotheses \ref{h0}, \ref{h1}, \ref{h2} on the kernels $k_s(\cdot)$ are the ones strictly necessary for the functionals $P_{k_s}(\cdot, \O)$ to define an anisotropic $s$-fractional perimeter measure. Indeed, \ref{h0} is necessary to ensure that, for each Lebesgue measurable set $E$, $\pks(E,\O)=\pks(E^c,\O)$, while \ref{h1} provides the functionals with the degree of homogeneity $n-s$.\\
In this paper we investigate the asymptotic behavior of the functionals $(1-s)\pks(\cdot,\O)$ as $s\to1^-$ in the case in which $\O$ is the whole space $\R^n$ or a Bounded domain with Lipschitz boundary. First we show a pointwise convergence and a $\Gamma$-convergence result with respect to the $L^1_{loc}(\R^n)$ convergence in the case $\O=\R^n$.
\begin{theorem}
    \label{pointwholespace}
     Suppose $k_s(\cdot)$ converge pointwise to a measurable function $k(\cdot):\R^n\setminus\{0\}\to\R_+$ in $\R^n\setminus\{0\}$. Then, there exists a unique origin symmetric convex body $ZB_k\sub\R^n$ such that for every $E\sub\R^n$ bounded measurable set of finite perimeter:
    \begin{equation}
        \label{pointwise}
        \slim (1-s)P_{k_s}(E,\R^n)=\pzbk(E,\R^n).
    \end{equation}
\end{theorem}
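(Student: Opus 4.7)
The plan is to adapt Ludwig's Blaschke--Petkantschin reduction \cite{Ludwig} to the general kernel setting prescribed by \ref{h0}--\ref{h2}. Since $\Omega=\R^n$ the boundary term $\pks^2$ is identically zero, so it suffices to analyze $(1-s)\I_{k_s}(E,E^c)$. The affine Blaschke--Petkantschin formula rewrites this double integral as an integral over the affine Grassmannian of lines in $\R^n$, with a Jacobian factor $|x-y|^{n-1}$. Using \ref{h0}--\ref{h1}, for a line of direction $v\in S^{n-1}$ parametrized so that $x-y=tv$ one has $k_s(x-y)=|t|^{-n-s}k_s(v)$, hence
\[
(1-s)\,k_s(x-y)\,|x-y|^{n-1} \;=\; (1-s)\,k_s(v)\,|t|^{-1-s}.
\]
The problem thus reduces to a line-by-line one-dimensional limit, weighted by $k_s(v)$.

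Next I would invoke the one-dimensional asymptotic
\[
\slim (1-s)\int_F\int_{F^c}|x-y|^{-1-s}\,dx\,dy \;=\; 2\,P(F,\R),
\]
valid for $F\subset\R$ of finite perimeter (the $1$D case used in \cite{Ambrosio}). Applying it line-by-line together with the pointwise convergence $k_s(v)\to k(v)$, and passing to the limit under the Grassmannian integral via dominated convergence (the upper bound in \ref{h2} yields the $s$-independent majorant $c\,|z|^{-n-1}$, whose Blaschke--Petkantschin pushforward is controlled using the isotropic limit, while boundedness and finite perimeter of $E$ ensure integrability) yields
\[
\slim(1-s)\pks(E,\R^n) \;=\; c_n\int_{S^{n-1}}k(v)\left(\int_{v^\perp}\#(\partial^*E\cap \ell_y^v)\,d\H^{n-1}(y)\right)d\sigma(v),
\]
with $\ell_y^v = y+\R v$ and $c_n$ a dimensional constant.

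The final step is to identify this expression as an anisotropic perimeter. Cauchy's integral-geometric formula gives $\int_{v^\perp}\#(\partial^*E\cap\ell_y^v)\,d\H^{n-1}(y) = \int_{\partial^*E}|\nu_E\cdot v|\,d\H^{n-1}$, and Fubini yields
\[
\slim(1-s)\pks(E,\R^n) \;=\; \int_{\partial^*E}\phi_k(\nu_E)\,d\H^{n-1}, \qquad \phi_k(\nu) := c_n\int_{S^{n-1}}|\nu\cdot v|\,k(v)\,d\sigma(v).
\]
Since $\phi_k$ is even, nonnegative, positively $1$-homogeneous and sublinear (a nonnegative superposition of the one-dimensional support functions $v\mapsto|\nu\cdot v|$), Minkowski's theorem provides a unique origin-symmetric convex body $ZB_k$ with $\phi_k=\|\cdot\|_{(ZB_k)^*}$; this is precisely the zonoid with even generating measure $k(v)\,d\sigma(v)$ on $S^{n-1}$. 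The right-hand side is then $\pzbk(E,\R^n)$ by definition of anisotropic perimeter, proving the claim.

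The main technical obstacle I anticipate is the uniform control needed for the line-by-line passage to the limit, since the $1$D BBM asymptotic is not uniform in $F$: one has to build an integrable majorant on the Grassmannian out of the bilateral bound \ref{h2} and the boundedness of $E$, in order to handle those lines that barely intersect $E$, for which the $1$D integrand can blow up before cancelling. A secondary issue is that the pointwise convergence of $k_s$ on $\R^n\setminus\{0\}$ must transfer to a.e.\ convergence on $S^{n-1}$ in a way compatible with the Blaschke--Petkantschin disintegration; this follows from \ref{h1} by restriction to the unit sphere.
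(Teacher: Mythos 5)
Your proposal is correct and follows essentially the same route as the paper: the affine Blaschke--Petkantschin reduction, the one-dimensional asymptotics from \cite{Ludwig} (with the uniform bound for $s\geq\tfrac12$ furnishing the integrable majorant you rightly flag as the crux), the Cauchy/Wieacker projection identity, and finally the identification of the limiting surface density $\tfrac12\int_{S^{n-1}}|\nu\cdot v|\,k(v)\,d\H^{n-1}(v)$ as the support function of a convex body. The only cosmetic difference is that the paper first encodes $k_s$ via a quasi-norm $|\cdot|_{k_s}$ and then converts the spherical integral into the body integral $\tfrac{n+1}{2}\int_{B_k}|y\cdot\nu|\,dy$ (a generalized moment body), whereas you identify the same body directly as the zonoid generated by the even measure $k\,d\sigma$ on $S^{n-1}$.
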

This theorem only represents a slight generalization of \cite[Theorem 4]{Ludwig} to functionals defined through more general integration kernels and can be proved using the same arguments after giving a proper characterization of the properties of the kernels $k_s(\cdot)$. The following $\Gamma$-convergence result also follows directly from Theorem \ref{pointwholespace}.
\begin{corollary}
    \label{gammawholespace}
    Let $\O\sub\R^n$ be a bounded domain with Lipschitz boundary. Suppose that the functions $k_s(\cdot)$ converge pointwise to a measurable function $k(\cdot):\R^n\setminus\{0\}\to\R_+$. Then, for every Lebesgue measurable set $E\sub\R^n$,
    \begin{equation}
        \Gamma\hbox{-}\lim_{s\to1^-}(1-s)\pks(E,\R^n)=\pzbk(E,\R^n)
    \end{equation}
    with respect to the $L^1_{loc}(\R^n)$ convergence of sets, where $ZB_k\sub\R^n$ is the same as in Theorem \ref{pointwholespace}.
\end{corollary}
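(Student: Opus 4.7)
The plan is to derive the $\Gamma$-limsup and $\Gamma$-liminf inequalities separately, both anchored in Theorem \ref{pointwholespace}.

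For the $\Gamma$-limsup I exhibit a recovery sequence. If $\pzbk(E,\R^n)=+\infty$ there is nothing to prove. Otherwise, hypothesis \ref{h2} together with the construction of $ZB_k$ in Theorem \ref{pointwholespace} guarantees that $\|\cdot\|_{ZB_k^*}$ is equivalent to the Euclidean norm, so $E$ has finite De Giorgi perimeter, and by the isoperimetric inequality either $E$ or $E^c$ has finite Lebesgue measure; by symmetry $\pks(E,\cdot)=\pks(E^c,\cdot)$ assume the former. When $E$ is bounded, the constant sequence $E_s\equiv E$ is a recovery sequence directly from \eqref{pointwise}. When $E$ is unbounded of finite perimeter and measure, I would truncate $E_R:=E\cap B_R$ and select a sequence $R_k\to\infty$ outside a negligible set of radii so that $\pzbk(E_{R_k},\R^n)\to\pzbk(E,\R^n)$ (the boundary contribution at $\partial B_{R_k}$ being controllable because $\int H^{n-1}(\partial B_R\cap E)\,dR=|E|$) and $E_{R_k}\to E$ in $L^1_{loc}$, apply the bounded case to each $E_{R_k}$, and extract a diagonal recovery sequence.

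For the $\Gamma$-liminf let $E_s\to E$ in $L^1_{loc}$ with $\liminf_{s\to1^-}(1-s)\pks(E_s,\R^n)<\infty$. Following the integral-geometric strategy used for Theorem \ref{pointwholespace}, I would apply a Blaschke--Petkantschin type formula to express $(1-s)\pks(E_s,\R^n)$ as an average over affine lines $\ell$ of weighted one-dimensional $s$-fractional locality defects of the slices $E_s\cap\ell$. Hypothesis \ref{h2} provides the uniform comparison with the isotropic one-dimensional fractional perimeter that allows a Fatou step in the line variable, after extracting a subsequence along which the slice convergence $E_s\cap\ell\to E\cap\ell$ in $L^1(\ell)$ holds for almost every line. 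On each such line, the one-dimensional isotropic $\Gamma$-liminf of Ambrosio--De Philippis--Martinazzi \cite{Ambrosio} supplies the required lower bound; the pointwise convergence $\ks\to k$ then identifies the result with the integral representation of $\pzbk(E,\R^n)$ coming from Theorem \ref{pointwholespace}.

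The main obstacle is the $\Gamma$-liminf step: one has to interchange the liminf simultaneously with the integration over lines in the Blaschke--Petkantschin identity and with the slicewise $L^1$ convergence, which requires a careful subsequence extraction (to obtain slicewise convergence for almost every line) and equi-integrability coming from \ref{h2} (to legitimate the Fatou step). The $\Gamma$-limsup is more routine; its only subtle point is the continuity of $\pzbk(\cdot,\R^n)$ along truncations by balls, which is standard once $R$ is chosen outside a negligible set of radii.
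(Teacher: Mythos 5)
Your proposal is correct and follows the same route as the paper: the $\Gamma$-liminf is obtained from the Blaschke--Petkantschin slicing formula together with the one-dimensional $\Gamma$-liminf inequality (Proposition \ref{1dliminf}, which is precisely the one-dimensional case of the Ambrosio--De Philippis--Martinazzi result you cite) and a Fatou argument in the line variable, while the $\Gamma$-limsup comes from the pointwise convergence in Theorem \ref{pointwholespace}, with an approximation by truncations for unbounded finite-perimeter sets. The paper's written proof records only the liminf chain of inequalities and leaves the limsup and the a.e.-line subsequence extraction implicit, so you are simply more explicit about those details; your one small misattribution is that hypothesis \ref{h2} is used to control the anisotropic weight $\abs{u(l)}_{k_s}^{-(n+s)}$ along lines rather than to license Fatou, which only needs nonnegativity.
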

The main result that we will prove here is the following $\Gamma$-convergence result for the rescaled functionals $(1-s)P_{k_s}(\cdot,\O)$ as $s\to1^-$ on bounded domains with Lipschitz boundary.
\begin{theorem}
    \label{gammaonomega}
    Let $\O\sub\R^n$ be a bounded domain with Lipschitz boundary. Suppose that the functions $k_s(\cdot)$ converge pointwise to a measurable function $k(\cdot):\R^n\setminus\{0\}\to\R_+$. Then, for every Lebesgue measurable set $E\sub\R^n$,
    \begin{equation}
        \Gamma\hbox{-}\lim_{s\to1^-}(1-s)\pks(E,\O)=\pzbk(E,\O)
    \end{equation}
    with respect to the $L^1_{loc}(\R^n)$ convergence of sets, where $ZB_k\sub\R^n$ is the same as in Theorem \ref{pointwholespace}.
\end{theorem}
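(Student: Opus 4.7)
The plan is to combine the Blaschke--Petkantschin integral-geometric technique used by Ludwig in \cite{Ludwig} with the $\Gamma$-convergence argument on Lipschitz domains of Ambrosio--De Philippis--Martinazzi \cite{Ambrosio}, reducing the $n$-dimensional anisotropic problem to a family of one-dimensional isotropic fractional perimeter problems along the directions $v\in S^{n-1}$. The key preliminary, derived via polar coordinates, Fubini, the scaling homogeneity \ref{h1} and the symmetry \ref{h0}, is the slicing identity
\begin{equation*}
\pks(E, \O) = \frac{1}{2} \int_{S^{n-1}} k_s(v) \int_{v^\perp} P_{1, s}(E_{y', v}, \O_{y', v}) \, d\H^{n-1}(y') \, d\sigma(v),
\end{equation*}
where $E_{y', v} := \{t \in \R : y' + tv \in E\}$, $\O_{y', v}$ is defined analogously, and $P_{1, s}$ denotes the one-dimensional isotropic fractional perimeter. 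The Lipschitz regularity of $\partial\O$ makes $\O_{y', v}$ a finite union of bounded open intervals for $\H^{n-1}$-a.e.\ $y' \in v^\perp$. It is essential that both $\pks^1$ and $\pks^2$ contribute on the left, and that they reassemble on each slice into the full one-dimensional functional $P_{1, s}^1 + P_{1, s}^2$ via the disjoint decomposition $\R^n = (E\cap\O) \sqcup (E^c\cap\O) \sqcup (E\cap\O^c) \sqcup (E^c\cap\O^c)$.

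For the $\Gamma$-liminf inequality, let $s_m \to 1^-$ and $E_m \to E$ in $L^1_{loc}(\R^n)$. Up to a subsequence, Fubini guarantees that for $d\sigma \otimes d\H^{n-1}$-a.e.\ $(v, y')$ the slices $(E_m)_{y', v}$ converge to $E_{y', v}$ in $L^1_{loc}(\R)$. The one-dimensional $\Gamma$-liminf inequality of \cite{Ambrosio}, applied slicewise on $\O_{y', v}$, yields
\begin{equation*}
\liminf_{m \to \infty}(1 - s_m) P_{1, s_m}((E_m)_{y', v}, \O_{y', v}) \geq P_1(E_{y', v}, \O_{y', v}),
\end{equation*}
the right-hand side being the number of essential boundary points of $E_{y', v}$ inside $\O_{y', v}$. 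Combining this with the pointwise convergence $k_{s_m}(v) \to k(v)$ and Fatou's lemma on the joint $(v, y')$ integration, then invoking the classical sliced perimeter identity $\int_{v^\perp} P_1(E_{y', v}, \O_{y', v})\, d\H^{n-1}(y') = \int_{\partial^* E \cap \O} |\nu_E \cdot v| \, d\H^{n-1}$ and Fubini, the lower bound becomes
\begin{equation*}
\int_{\partial^* E \cap \O} \left(\frac{1}{2} \int_{S^{n-1}} k(v) |\nu_E(x) \cdot v| \, d\sigma(v)\right) d\H^{n-1}(x) = \pzbk(E, \O),
\end{equation*}
the last equality being the moment-type characterization of $ZB_k$ underlying Theorem \ref{pointwholespace}.

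For the $\Gamma$-limsup, a standard density-plus-diagonal reduction leaves us with producing a recovery sequence for bounded sets $E$ with smooth boundary satisfying the transversality $\H^{n-1}(\partial E \cap \partial\O) = 0$. For such $E$ the constant sequence $E_s \equiv E$ works: the decomposition $\pks(E, \O) = \pks(E, \R^n) - \I_{k_s}(E \cap \O^c, E^c \cap \O^c)$ splits the functional into two rescaled nonlocal quantities with computable limits. By Theorem \ref{pointwholespace}, $(1-s)\pks(E, \R^n) \to \pzbk(E, \R^n)$; the second term, via the same slicing identity combined with one-dimensional pointwise convergence on nice slices and dominated convergence (the uniform integrable majorant coming from \ref{h2}), tends to the anisotropic perimeter of $E$ in the open exterior $\R^n\setminus\overline{\O}$. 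Transversality forces $\pzbk(E, \partial\O) = 0$, whence $\pzbk(E, \R^n)$ equals the sum of the contributions from $\O$ and from $\R^n\setminus\overline{\O}$; the subtraction then yields the desired $\pzbk(E, \O)$. The generic $E$ with $\pzbk(E, \O) < \infty$ is then handled by approximation with smooth transverse sets converging in $L^1(\O)$ with convergent anisotropic perimeters (available on Lipschitz $\O$), followed by a diagonal extraction. The most delicate point is expected to be the $\Gamma$-liminf: ensuring that the cross-boundary interactions encoded in $\pks^2$ reshuffle correctly onto sliced domains $\O_{y', v}$ whose geometry varies with $v$ is where the Lipschitz regularity of $\partial\O$ becomes essential, as it is precisely what guarantees $\O_{y', v}$ to be a finite union of bounded intervals almost everywhere, making the one-dimensional result of \cite{Ambrosio} directly applicable.
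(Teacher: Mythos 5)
Your proposal takes a genuinely different route from the paper. The paper uses the Blaschke--Petkantschin slicing only for the whole-space result (Theorem~\ref{pointwholespace}), and then establishes the bounded-domain $\Gamma$-liminf via a blow-up argument combined with the gluing construction (Proposition~\ref{gluing}, Lemma~\ref{gammaequal}) and a Besicovitch covering, and the $\Gamma$-limsup via direct estimates on small cubes covering the polyhedral boundary (Lemmas~\ref{lemma81}--\ref{lemma82}). You instead slice the bounded-domain functional itself, obtaining
\begin{equation*}
(1-s)\pks(E,\O)=\frac{1}{2}\int_{S^{n-1}}k_s(v)\int_{v^\perp}(1-s)P_s^{(1\mathrm d)}\!\left(E_{y',v},\O_{y',v}\right)d\H^{n-1}(y')\,d\sigma(v),
\end{equation*}
and reduce both inequalities to one-dimensional isotropic facts on the slices. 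This is conceptually cleaner and avoids the blow-up and covering machinery, but the trade-off is a stronger set of one-dimensional inputs: for the liminf you need the $\Gamma$-liminf of \cite[Theorem~2]{Ambrosio} on \emph{bounded} one-dimensional domains (not merely on $\R$, which is what Proposition~\ref{1dliminf} of the paper provides), applied slicewise with the usual Fubini subsequence extraction and a verification that a.e.\ slice $\O_{y',v}$ is a finite union of intervals (true because $\int_{v^\perp}P(\O_{y',v},\R)\,d\H^{n-1}(y')<\infty$ by Lipschitz regularity of $\partial\O$, but worth stating). The liminf part is essentially sound modulo these.

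The genuine gap is in the $\Gamma$-limsup. Your decomposition $\pks(E,\O)=\pks(E,\R^n)-\I_{k_s}(E\cap\O^c,E^c\cap\O^c)$ is correct, and Theorem~\ref{pointwholespace} handles the first term. But the claim that $(1-s)\I_{k_s}(E\cap\O^c,E^c\cap\O^c)\to \pzbk(E,\R^n\setminus\overline\O)$ is not a consequence of anything established so far. Slicing reduces it to the pointwise limit $(1-s)P_s^{(1\mathrm d),1}(F,U)\to P(F,U)$ for $F$ a bounded finite union of intervals and $U=(\O^c)_{y',v}$ a cofinite union of intervals (so $U$ is \emph{unbounded}). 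Neither Proposition~\ref{lemma1ludw} (which is $U=\R$) nor the $n$-dimensional Theorem~\ref{pointwholespace} gives this; it must be proved by a separate computation analogous to Ludwig's Lemma~1, verifying that boundary points of $F$ lying inside $U$ contribute $1$ and those outside $\overline U$ contribute $0$, under a transversality that you correctly impose. This 1D statement is true and not hard, but it is a new lemma, not a citation, and without it the limsup argument is incomplete. The paper avoids this entirely by never invoking pointwise convergence on domains other than $\R^n$: it works with compactly contained cubes, where the cross-boundary terms are handled by Lemma~\ref{lemma82}, and reduces to Theorem~\ref{pointwholespace} via the algebraic identity \eqref{decomposition}.

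Two minor further points. First, the argument showing $\pzbk(E,\R^n)-\pzbk(E,\R^n\setminus\overline\O)=\pzbk(E,\O)$ does hinge on transversality ($\H^{n-1}(\partial^*E\cap\partial\O)=0$), which you state; but you should also note that this transversality must propagate to a.e.\ slice so that the 1D limit on $(\O^c)_{y',v}$ sees no boundary point of $E_{y',v}$ on $\partial\O_{y',v}$. Second, the density-in-energy reduction to smooth transverse bounded sets is available (Morse--Sard plus Reshetnyak continuity, as the paper does for polyhedra), so that part is fine, provided unbounded $E$ are truncated far from $\O$ before smoothing.
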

This result can be seen as a generalization of \cite[Theorem 2]{Ambrosio} to the anisotropic setting. The proof is obtained by means of introducing the technique proposed by M. Ludwig in \cite{Ludwig} in the arguments used by L. Ambrosio, G. De Philipps and L. Martinazzi in \cite{Ambrosio}. Their result in the isotropic case can be recovered by computing $\pzbk(\cdot,\O)$ in the case in which the kernels $k_s(\cdot)$ are the ones induced by the euclidean norm:
\begin{equation*}
    k_s(z)=\frac{1}{\abs{z}^{n+s}}.
\end{equation*}
There are two main advantages in recovering the isotropic $\Gamma$-limit as a particular case of the anisotropic one. First, the computations necessary to determine the multiplicative constant $\o_{n-1}$ come out to be slightly simplified. Moreover, the determination of $\o_{n-1}$ made in \cite[Section 3.3]{Ambrosio} in hinged on the fact that halfspaces are local minimizers for the $s$-fractional perimeter (c.f. \cite[Corollary 5.3]{CafSav}), while here we only need this property of halfspaces in dimension $1$.\\
After proving $\Gamma$-convergence for the functionals $(1-s)\pks(\cdot,\O)$, it is possible to adapt the proof of \cite[Theorem 3]{Ambrosio} in order to obtain convergence of local minimizers also in this anisotropic setting, only minor changes in the argument being necessary.
\begin{theorem}
    \label{convlocmin}
    Let $(s_i)_{i\in\N}\sub(0;1)$ be a sequence such that $s_i\to1^-$ and suppose that for each $i\in\N$ $E_i\sub\R^n$ is a local minimizer for $\pksi(\cdot,\O)$. Suppose also that there exists a measurable set $E\sub\R^n$ for which $E_i\to E$ in $L^1_{loc}(\R^n)$. Then, for every $\O'\Subset\O$:
    \begin{equation}
        \label{minbound}
        \limsup_{i\to\infty}(1-s_i)\pksi(E_i,\O')<\infty.
    \end{equation}
    Moreover, $E$ is a local minimizer for $\pzbk(\cdot,\O)$ and for every $\O'\Subset\O$ with $\pzbk(E,\partial \O')=0$:
    \begin{equation}
        \lim_{i\to\infty}(1-s_i)\pksi(E_i,\O')=\pzbk(E,\O').
    \end{equation}
\end{theorem}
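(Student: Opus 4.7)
The plan is to adapt the proof of \cite[Theorem 3]{Ambrosio} to the anisotropic setting, taking Theorem \ref{gammaonomega} as a black box for the $\Gamma$-convergence structure and invoking Theorem \ref{pointwholespace} whenever uniform-in-$i$ energy estimates for fixed ``test'' sets are needed. The argument splits into three parts: (i) establishing the uniform energy bound \eqref{minbound}; (ii) showing that $E$ inherits local minimality for $\pzbk(\cdot,\O)$; and (iii) upgrading the $\Gamma$-convergence to pointwise convergence of the energies on $\O'$ under the extra assumption $\pzbk(E,\partial\O')=0$.

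For the uniform bound I fix $\O'\Subset\O''\Subset\O$ with $\O''$ of Lipschitz (or smooth) boundary and compare $E_i$ with the competitor $F_i:=E_i\setminus\O''$, which satisfies $E_i\Delta F_i\sub\O''\Subset\O$. Expanding the inequality $\pksi(E_i,\O)\leq\pksi(F_i,\O)$ into its $\pksi^1$ and $\pksi^2$ pieces and cancelling the contributions supported off $\O''$ reduces the estimate to controlling terms of the form $\I_{k_{s_i}}(\O'',(\O'')^c)$. Theorem \ref{pointwholespace} applied to the fixed Caccioppoli set $\O''$ guarantees that $(1-s_i)\I_{k_{s_i}}(\O'',(\O'')^c)$ stays bounded as $s_i\to 1^-$, giving \eqref{minbound} through the monotonicity $\pksi(E_i,\O')\leq\pksi(E_i,\O'')\leq\pksi(E_i,\O)$.

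For the local minimality of $E$, given any competitor $F$ with $E\Delta F\Subset\O$, I pick an open set $A$ with $E\Delta F\Subset A\Subset\O$ and use the $\Gamma$-limsup half of Theorem \ref{gammaonomega} to produce a recovery sequence $F_i\to F$ in $L^1_{loc}(\R^n)$ with $\limsup_i(1-s_i)\pksi(F_i,\O)\leq\pzbk(F,\O)$. I then cut and paste: $G_i:=(F_i\cap A)\cup(E_i\setminus A)$, so that $G_i\Delta E_i\sub A\Subset\O$ and $G_i\to F$ in $L^1_{loc}$. The local minimality of $E_i$ yields $\pksi(E_i,\O)\leq\pksi(G_i,\O)$, and the surplus of $\pksi(G_i,\O)$ over $\pksi(F_i,\O)+\pksi(E_i,\O\setminus\overline A)$ comes from $\I_{k_{s_i}}$-cross terms across $\partial A$ that are again controlled by Theorem \ref{pointwholespace} applied to $A$. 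Passing to the $\Gamma$-liminf on the left-hand side (via Theorem \ref{gammaonomega}) and combining the $\Gamma$-limsup with the already-proven uniform bound for the annular term produces $\pzbk(E,\O)\leq\pzbk(F,\O)$.

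Finally, the convergence of energies on $\O'$ combines the $\Gamma$-liminf inequality of Theorem \ref{gammaonomega} applied to $\O'$ with a matching cut-and-paste upper bound obtained by choosing $F=E$ and a recovery sequence $\tilde E_i\to E$ for $\pzbk(\cdot,\O')$, pasted to $E_i$ across a thin annulus just outside $\overline{\O'}$ inside $\O$. The hypothesis $\pzbk(E,\partial\O')=0$ is used exactly to show that, as the annulus thickness shrinks, the annular $\I_{k_{s_i}}$-cross terms vanish in the limit and the inner and outer regular values of $\pzbk(E,\cdot)$ collapse to $\pzbk(E,\O')$. The principal technical obstacle throughout is the uniform-in-$s_i$ control of these annular cross terms; this is where hypothesis \ref{h2} (which pointwise dominates $k_{s_i}$ by a multiple of the Euclidean kernel) together with Theorem \ref{pointwholespace} plays the decisive role, letting the isotropic arguments of \cite{Ambrosio} be imported essentially verbatim into the present anisotropic framework.
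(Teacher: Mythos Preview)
Your overall architecture (energy bound, then minimality, then convergence of energies) matches the paper, and Step~(i) is essentially the same idea as the paper's Step~1 (the paper removes $E_i$ from $\O\setminus\O_\delta$ rather than from a fixed $\O''$, but the mechanism is identical). The substantive gap is in Steps~(ii) and~(iii), specifically in the ``cut-and-paste'' you propose.

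The competitor $G_i:=(F_i\cap A)\cup(E_i\setminus A)$ is glued along the \emph{fixed} hypersurface $\partial A$. The extra terms this introduces are of the form $\I_{k_{s_i}}\!\big(A,(E_i\Delta F_i)\cap A^c\big)$ (and symmetric variants). You say these are ``controlled by Theorem~\ref{pointwholespace} applied to $A$'', but that theorem only tells you that $(1-s_i)\I_{k_{s_i}}(A,A^c)\to\pzbk(A,\R^n)>0$: you get a \emph{uniform bound}, not a term that vanishes. Since $A$ is pinned by $E\Delta F\Subset A$, you cannot shrink it away, and you end up with $\pzbk(E,\O)\leq\pzbk(F,\O)+C$ for a fixed $C>0$, which is useless. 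More concretely, even though $\L^n\big((E_i\Delta F_i)\cap A^c\big)\to0$, nothing prevents $E_i\Delta F_i$ from concentrating near $\partial A$; since $\int_A k_{s_i}(x-y)\,dx\sim c\,d(y,A)^{-s_i}$ for $y\in A^c$, the rescaled cross term need not tend to zero. The same objection applies verbatim to your Step~(iii).

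The paper (following \cite{Ambrosio}) avoids this by using the gluing construction of Proposition~\ref{gluing}: instead of cutting on a fixed surface, one interpolates $w=\phi\chi_{E_i}+(1-\phi)\chi_{F_i}$ across an annulus and selects a good super-level set via the coarea formula. The resulting error is $C\,\L^n\big((E_i\Delta F_i)\cap(\text{annulus})\big)/(1-s_i)$, so \emph{after} multiplying by $(1-s_i)$ one is left with just the $L^1$-distance, which does vanish. This is the key technical device you are missing. In addition, the paper works on balls $B_R\Subset\O$ and uses the De~Giorgi--Letta compactness for monotone set functions (Theorem~\ref{Letta}) to produce a limit $\alpha$, show $\alpha\leq\pzbk(E,\cdot)$, and hence transfer the hypothesis $\pzbk(E,\partial\O')=0$ into $\alpha(\partial\O')=0$; your outline does not account for this passage either.
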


\section{Preliminaries}
We dedicate this section to fixing some notation and briefly illustrating some preliminaries on Caccioppoli sets, homogeneous spaces and Blaschke-Petkantschin type formulas.

\subsection{General notation} Let us introduce here some fixed notation that will be used throughout the rest of the paper.\\
For any $K\sub\R^n$ convex body symmetric with respect to the origin the anisotropic norm induced by $K$ on $\R^n$ will be denoted by $\norm{\cdot}_K$ instead.\\
For $x\in\R^n$ and $r>0$ let $B(x,r)$ be the open ball with center $x$ and radius $r$. When $x=0$ we will use the notation $B_r=B(0,r)$.\\
Let $H$ denote the halfspace $\{x\in\R^n:x_n\leq 0\}$ and let $Q=(-\frac{1}{2},\frac{1}{2})^n\sub\R^n$ be the standard cube in $\R^n$ centered in the origin with side length one.\\
For a function $\phi:\R^n\to\R^n$ and a vector $y\in\R^n$ we will denote by $\tau_y\phi(x)=\phi(x+y)$ the $y$-translated function.\\
The Lebesgue measure on $\R^n$ will be denoted by $\L^n$, whereas for $d\geq0$ the $d$-dimensional Hausdorff measure on $\R^n$ will be denoted by $\H^d$.\\
For a measurable set $E$ and $\O\sub\R^n$ as above, the classical perimeter of $E$ inside $\O$ will be denoted by $P(E,\O)$.\\
If $G\sub\R^n$, we will denote its internal part by $G^\circ$.\\
The Lebesgue measure of the unit ball in $\R^n$, $\L^n(B(1))$, will be denoted by $\o_n$.\\

\subsection{Caccioppoli sets}
In this subsection we give some definition and recall some basic results on sets with finite perimeter, also known as Caccioppoli sets. For an exhaustive presentation of the topic we refer to the books by L.C. Evans and R.F. Gariepy (\cite{Evansgariepy}) and F. Maggi (\cite{Maggi}). Let $E\sub\R^n$ be a Lebesgue measurable set and let $\O$ be an open set.
\begin{definition}
    We say that $E$ is a set of locally finite perimeter in $\O$ if for every compact set $K\sub\O$ the following quantity is finite:
    \begin{equation}
       sup\left\{\int_E\mathrm{div}\phi\,dx:\;\phi\in C^\infty_c(K,\R^n):\sup\abs{\phi}\leq 1\right\}<\infty
    \end{equation}
    If the bound above is independent of $K$ we say that $E$ has finite perimeter in $\O$ and we define the perimeter of $E$ is $\O$ as:
    \begin{equation}
        P(E,\O):=sup\left\{\int_E\mathrm{div}\phi\,dx:\;\phi\in C^\infty_c(\O,\R^n):\sup\abs{\phi}\leq 1\right\}.
    \end{equation}
\end{definition}
Sets with locally finite perimeter can be characterized in terms of the distributional gradient of their characteristic functions. In particular, a measurable set $E\sub\R^n$ is a set of locally finite perimeter if and only if the distributional gradient of its characteristic function is a vector valued Radon measure, as specified by the following proposition (c.f.\cite[Proposition 1.21]{Maggi}).
\begin{proposition}
    \label{deffinperf}
    Let $E\sub\R^n$ be a measurable set. Then $E$ is a set of locally finite perimeter if and only if there exists a positive Radon measure $\norm{\partial E}$ and a $\norm{\partial E}$-measurable function $\nu_E:\R^n\to\R^n$ such that $\abs{\nu_E}=1$ $\norm{\partial E}$-almost everywhere on $\R^n$ and for each $\phi\in C^1_c(\R^n,\R^n)$ it holds that:
    \begin{equation}
        \label{generaldiv}
        \int_E \mathrm{div}\phi\,dx=\int_{\R^n}\phi \cdot \nu_E\,d\norm{\partial E}.
    \end{equation}
    Moreover, $E$ has finite perimeter if and only if $\norm{\partial E}(\R^n)<\infty$.
\end{proposition}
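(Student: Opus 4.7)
The plan is to prove this as a straightforward application of the Riesz representation theorem for vector-valued functionals, together with the polar decomposition of a vector-valued Radon measure. The two directions are quite different in character.

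The easy direction is the ``if'' part. Assuming such a pair $(\norm{\partial E},\nu_E)$ exists, I would fix a compact set $K\sub\R^n$ and any $\phi\in C^\infty_c(K,\R^n)$ with $\sup\abs{\phi}\leq 1$. Plugging $\phi$ into the identity \eqref{generaldiv} and using $\abs{\nu_E}=1$ $\norm{\partial E}$-a.e., I get
\begin{equation*}
\Bigl|\int_E \mathrm{div}\phi\,dx\Bigr|=\Bigl|\int_{\R^n}\phi\cdot\nu_E\,d\norm{\partial E}\Bigr|\leq \norm{\partial E}(K),
\end{equation*}
which is finite because $\norm{\partial E}$ is Radon. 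Taking the supremum over such $\phi$ shows $E$ has locally finite perimeter. The same estimate extended to $\phi\in C^1_c(\R^n,\R^n)$ with $\sup\abs{\phi}\leq 1$ yields $P(E,\R^n)\leq\norm{\partial E}(\R^n)$, giving the ``finite perimeter'' implication.

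For the harder ``only if'' direction, I would start from the linear functional
\begin{equation*}
L(\phi):=\int_E \mathrm{div}\phi\,dx, \qquad \phi\in C^\infty_c(\R^n,\R^n).
\end{equation*}
The hypothesis that $E$ has locally finite perimeter gives, for every compact $K\sub\R^n$, a constant $C_K<\infty$ with $|L(\phi)|\leq C_K\sup\abs{\phi}$ whenever $\mathrm{supp}\,\phi\sub K$. This makes $L$ a continuous linear functional on $C^\infty_c(\R^n,\R^n)$ endowed with the $\sup$-norm on each compact set, and a standard mollification/density argument extends $L$ uniquely to a continuous linear functional on $C^0_c(\R^n,\R^n)$ with the inductive limit topology. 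Applying the Riesz representation theorem componentwise, I obtain a vector-valued Radon measure $\mu=(\mu_1,\dots,\mu_n)$ on $\R^n$ such that
\begin{equation*}
L(\phi)=\int_{\R^n}\phi\cdot d\mu \qquad \forall\phi\in C^0_c(\R^n,\R^n).
\end{equation*}

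The final step is the polar decomposition of $\mu$. I would define $\norm{\partial E}$ to be the total variation measure $|\mu|$, which is a positive Radon measure, and apply the Radon--Nikodym theorem (together with the usual argument identifying the density as a unit vector field) to obtain a $|\mu|$-measurable function $\nu_E:\R^n\to\R^n$ with $|\nu_E|=1$ $|\mu|$-a.e. such that $d\mu=\nu_E\,d|\mu|$. Substituting back into the representation of $L$ yields \eqref{generaldiv}. For the ``finite perimeter'' addendum, one checks that $\norm{\partial E}(\R^n)=\sup\{L(\phi):\phi\in C^1_c(\R^n,\R^n),\,\sup\abs{\phi}\leq 1\}=P(E,\R^n)$ by approximating $\nu_E$ in $L^1(|\mu|)$ by compactly supported smooth vector fields of norm at most $1$, so finiteness of $P(E,\R^n)$ is equivalent to $\norm{\partial E}(\R^n)<\infty$.

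The main obstacle is purely technical rather than conceptual: carefully justifying the extension of $L$ from $C^\infty_c$ to $C^0_c$ and then producing the unit vector field via the polar decomposition, where one must argue that the Radon--Nikodym density of $\mu$ with respect to $|\mu|$ has unit norm almost everywhere. Both facts are standard (e.g.\ \cite{Evansgariepy,Maggi}), so I would simply cite the relevant results rather than reproduce the constructions.
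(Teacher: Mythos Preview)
Your proposal is correct and follows the standard route (Riesz representation for the functional $\phi\mapsto\int_E\mathrm{div}\phi$, followed by polar decomposition of the resulting vector-valued Radon measure). The paper does not actually prove this proposition: it is stated as a preliminary and simply cited from \cite[Proposition 1.21]{Maggi}, whose argument is precisely the one you outline.
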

If $E\sub\R^n$ is a set of locally finite perimeter the radon measure $\norm{\partial E}$ is called the \textit{Gauss-Green measure} or the \textit{perimeter measure} of $E$. In the one-dimensional case, a set $E$ is of locally finite perimeter if and only if it is up to a set of $\L^1$-measure zero the countable union of open intervals lying at mutually positive distance (c.f. \cite[Proposition 12.13]{Maggi}).
Sets of locally finite perimeter can be also characterized in terms of the structure of their reduced boundary.
\begin{definition}
    \label{redbound}
    Let $E\sub\R^n$ be a set with locally finite perimeter in $\R^n$. We say that $x\in\R^n$ belongs to the reduced boundary of $E$, which we denote by $\partial^*E$, if $\abs{\nu_E}(x)=1$ (where $\nu_E$ is defined in Proposition \ref{deffinperf}), $\norm{\partial E}B(x,r)>0$ for every $r>0$ and:
    \begin{equation}
        \label{reducedchar}
        \lim_{r\to0^+}\frac{1}{\norm{\partial E}B(x,r)}\int_{B(x,r)}\nu_E\,d\norm{\partial E}=\nu_E(x).
    \end{equation}
\end{definition}
For every $x\in\partial^*E$ the unit vector $\nu_E(x)$ is called the measure theoretic outer unit normal vector to $E$ at $x$. One of the main tools that can be used to study the structure of sets of locally finite perimeter is the \textit{blow up} of the reduced boundary. Let $E\sub\R^n$ be a set with locally finite perimeter, let $x\in\partial^* E$ and $r>0$. The blow up of $E$ at $x$ with scaling $r>0$ is the measurable set:
\begin{equation}
    E_{x,r}\coloneq \frac{E-x}{r}.
\end{equation}
Note that, since $E$ has locally finite perimeter, then, for each $r>0$, $E_{x,r}$ has locally finite perimeter as well.\\
For $x\in\partial^*E$ define:
\begin{equation}
    H_x\coloneq\{y\in\R^n:y\cdot \nu_E(x)\leq 0\}.
\end{equation}
The following theorem (\cite[Theorem 15.5]{Maggi}, \cite[Section 5.7.2, Theorem 1]{Evansgariepy}), states that the blow-ups at a point $x$ of the reduced boundary converge to the negative halfspace $H_x$ in $L^1_{loc}(\R^n)$ as $r\to0^+$.
\begin{theorem}
    If $E\sub\R^n$ is a set of locally finite perimeter and $x\in\partial^*E$, then:
    \begin{equation}
        E_{x,r}\to H_x
    \end{equation}
    as $r\to0^+$ in $L^1_{loc}(\R^n)$.
    Moreover, it holds that:
    \begin{equation}
        \nu_{E_{x,r}}\norm{\partial E_{x,r}}\xrightarrow{*}\nu_E \norm{\partial E}
    \end{equation}
    weakly-star in the sense of vector valued Radon measures as $r\to0^+$. 
\end{theorem}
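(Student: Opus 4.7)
The plan is to establish this classical De Giorgi blow-up result by a compactness/uniqueness strategy: show that any subsequential $L^1_{\mathrm{loc}}$-limit of $E_{x,r}$ along $r\to 0^+$ must be exactly the halfspace $H_x$, and then conclude convergence of the whole family. The natural framework is the scaling behavior of the Gauss-Green measure: by change of variables one has, for any Borel $A\sub\R^n$,
\begin{equation*}
    \norm{\partial E_{x,r}}(A)=\frac{1}{r^{n-1}}\norm{\partial E}(x+rA),\qquad \nu_{E_{x,r}}(y)=\nu_E(x+ry)\ \ \norm{\partial E_{x,r}}\hbox{-a.e.}
\end{equation*}
This will be the workhorse for all estimates.

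First I would prove uniform perimeter bounds on compact sets. Since $x\in\partial^*E$, a standard density argument for the Radon measure $\norm{\partial E}$ gives $\limsup_{r\to0^+}\norm{\partial E}(B(x,r))/r^{n-1}<\infty$, so from the scaling above one gets $\sup_{r\in(0,r_0)}\norm{\partial E_{x,r}}(B_R)<\infty$ for every $R>0$. By the standard compactness theorem for sets of locally finite perimeter, any sequence $r_k\to 0^+$ admits a subsequence along which $E_{x,r_k}\to F$ in $L^1_{\mathrm{loc}}(\R^n)$ for some set $F$ of locally finite perimeter, with $\nu_{E_{x,r_k}}\norm{\partial E_{x,r_k}}\xrightarrow{*}\nu_F\norm{\partial F}$. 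The main obstacle, and the heart of the argument, will be to identify $F=H_x$.

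To identify $F$ I would use the reduced-boundary condition \eqref{reducedchar}. For any $R>0$, the scaling gives
\begin{equation*}
    \int_{B_R}\nu_{E_{x,r}}\,d\norm{\partial E_{x,r}}=\frac{1}{r^{n-1}}\int_{B(x,rR)}\nu_E\,d\norm{\partial E},
\end{equation*}
and \eqref{reducedchar} together with the density property of $\norm{\partial E}$ at $x$ forces the right-hand side to be asymptotically parallel to $\nu_E(x)$ with length $\norm{\partial E_{x,r}}(B_R)$. Passing to the weak-star limit along $r_k\to 0^+$ at radii $R$ for which $\norm{\partial F}(\partial B_R)=0$, I obtain $\int_{B_R}\nu_F\,d\norm{\partial F}=\nu_E(x)\,\norm{\partial F}(B_R)$, and since $|\nu_F|=1$ $\norm{\partial F}$-a.e., this equality case in Cauchy-Schwarz forces $\nu_F\equiv\nu_E(x)$ on $\mathrm{spt}\norm{\partial F}$. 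A standard constancy argument (constant distributional derivative of $\chi_F$ orthogonal to $\nu_E(x)$, and one-dimensional monotonicity in the $\nu_E(x)$-direction) then yields $F=\{y\cdot\nu_E(x)\leq c\}$ for some $c\in\R$.

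It remains to pin down $c=0$, and this is the step I expect to be most delicate. Here I would exploit that $x\in\mathrm{spt}\norm{\partial E}$ with positive lower density: the scaling identity implies $\norm{\partial E_{x,r}}(B_\rho)>0$ for every $\rho>0$ uniformly in $r$ small, so a weak-star lower-semicontinuity argument applied along a radius $\rho$ with $\norm{\partial F}(\partial B_\rho)=0$ gives $\norm{\partial F}(B_\rho)>0$ for every $\rho>0$, which is incompatible with $|c|>0$. Hence $F=H_x$. Because every subsequential limit agrees with the deterministic set $H_x$, the full family $E_{x,r}$ converges to $H_x$ in $L^1_{\mathrm{loc}}(\R^n)$ as $r\to 0^+$, and the corresponding weak-star convergence of the measures $\nu_{E_{x,r}}\norm{\partial E_{x,r}}$ follows from the same subsequence-uniqueness principle applied to the already-established uniform local mass bound.
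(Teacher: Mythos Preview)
The paper does not prove this theorem: it is stated in the preliminaries with references to \cite[Theorem 15.5]{Maggi} and \cite[Section 5.7.2, Theorem 1]{Evansgariepy}, and no argument is given. Your proposal is essentially the classical De Giorgi proof found in those references, so there is nothing in the paper to compare against.

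That said, one step in your sketch is not quite right as written. In pinning down $c=0$ you invoke a ``weak-star lower-semicontinuity argument'' to deduce $\norm{\partial F}(B_\rho)>0$ from a uniform positive lower bound on $\norm{\partial E_{x,r_k}}(B_\rho)$. But lower semicontinuity of the total variation under $L^1_{\mathrm{loc}}$ convergence gives the inequality in the wrong direction, namely $\norm{\partial F}(B_\rho)\leq\liminf_k\norm{\partial E_{x,r_k}}(B_\rho)$, which does not force the left side to be positive. What you actually need is weak-star convergence of the total variations $\norm{\partial E_{x,r_k}}\xrightarrow{*}\norm{\partial F}$, and this is not automatic from the convergence of the vector measures $\nu_{E_{x,r_k}}\norm{\partial E_{x,r_k}}\xrightarrow{*}\nu_F\norm{\partial F}$; it requires an extra step using the reduced-boundary condition (the fact that $\big|\int_{B_R}\nu_{E_{x,r_k}}\,d\norm{\partial E_{x,r_k}}\big|/\norm{\partial E_{x,r_k}}(B_R)\to1$ rules out loss of mass in the limit). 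Alternatively, and more cleanly, you can fix $c=0$ via the \emph{volume} density bounds $0<c_1\leq |E\cap B(x,r)|/|B(x,r)|\leq 1-c_1$, which are among the standard preliminary lemmas at reduced-boundary points: these pass directly through $L^1_{\mathrm{loc}}$ convergence to give $|F\cap B_\rho|/|B_\rho|\in[c_1,1-c_1]$ for every $\rho>0$, which is incompatible with $c\neq 0$.
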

By De Giorgi's structure theorem for sets of locally finite perimeter (\cite[Theorem 15.9]{Maggi}), if $E$ is a set of locally finite perimeter, then its reduced boundary $\partial^* E$ is countably $(n-1)$-rectifiable. Namely, $\partial^*E$ is the union, up to a set of $\H^{n-1}$-measure zero, of countably many  $C^1$-hypersurfaces $(S_j)_j$, $\nu_E$ coincides on every $S_j$ with the classical outward pointing unit normal vector and $\norm{\partial E}=\H^{n-1}\resmes\partial^*E$.   

\subsection{Blaschke-Petkantschin formulas}
The technique exploited by M. Ludwig in \cite{Ludwig} is based on the application of an integral geometric identity generally known as Blaschke-Petkantschin formula. Actually, many different version of this identity exist, so it would be more appropriate to talk about a family of Blaschke-Petkantschin-like formulas. Boradly speaking, with the name Blascke-Petkantshin formulas we refer to various formulas, commonly used in integral geometry (\cite[Section 7.2]{Schneider}), that are used to compute integrals of nonnegative measurable functions over the product of some copies of a homogeneous space by means of slicing the space itself into lower dimensional subspaces. In this subsection we briefly describe the version of the Blaschke-Petkantschin formula that we will use and the tools it requires. For a detailed presentation of the topic we refer to \cite[Section 13.2]{Schneider}. Let $A(n,1)$ denote the $1$-dimensional affine Grassmannian on $\R^n$, i.e. the space of all $1$-dimensional affine subspaces of $\R^n$ (lines). Such space can be equipped with a suitably normalized, rigid-motion invariant Haar measure, that we will denote with $dL$, which can be described as follows. Every $l\in A(n,1)$ can be written as $l=\{x+tu(l):t\in\R\}$, where $u(l)\in S^{n-1}$ and $x\in u(l)^{\perp}$. Let $h:A(n,1)\to\R_+$ be a measurable function. Then:
\begin{equation*}
    \int_{A(n,1)}h(l)dL=\frac{1}{2}\int_{S^{n-1}}\int_{u^\perp}h(\{x+tu:t\in\R\})d\mathcal{H}^{n-1}(x)d\mathcal{H}^{n-1}(u).
\end{equation*}
For a more rigorous description of invariant measures on locally compact homogeneous topological spaces see \cite[Section 13.2]{Schneider}. The next theorem illustrates the version of the Blaschke-Petkantschin formula that we will use (c.f. \cite[7.2.7]{Schneider})
\begin{lemma}[Blaschke-Petkantschin formula]
    Let $g:\R^n\times\R^n\to\R_+$ be a nonnegative Lebesgue measurable function. Then:
    \begin{equation*}
        \int_{\R^n}\int_{\R^n}g(x,y)dxdy
        =\int_{A(n,1)}\left(\int_l\int_l g(x,y)\abs{x-y}^{n-1}d\H^1(x)d\H^1(y)\right)dL.
    \end{equation*}
\end{lemma}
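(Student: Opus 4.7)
The plan is to verify the identity by direct computation, transforming each side into the same quadruple integral using the explicit description of the Haar measure $dL$ recalled just before the statement, together with polar coordinates on $\R^n$.

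For the right-hand side, I would substitute the formula for $dL$ into the outer integral. Writing a generic line as $l=\{z+tu:t\in\R\}$ for $u\in S^{n-1}$ and $z\in u^\perp$, the two line integrals against $\H^1$ become Lebesgue integrals on $\R$, so that
\begin{equation*}
    \text{RHS}=\frac{1}{2}\int_{S^{n-1}}\int_{u^\perp}\int_\R\int_\R g(z+tu,z+su)\abs{s-t}^{n-1}\,dt\,ds\,d\H^{n-1}(z)\,d\H^{n-1}(u).
\end{equation*}
For the left-hand side, I would use polar coordinates centred at $x$ for the inner integration in $y$, writing $y=x+ru$ with $r>0$ and $u\in S^{n-1}$, so that $dy=r^{n-1}dr\,d\H^{n-1}(u)$. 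Then, exchanging the order of integration and decomposing $x=z+tu$ along the orthogonal splitting $\R^n=u^\perp\oplus\R u$ (with $z\in u^\perp$, $t\in\R$, $dx=dt\,d\H^{n-1}(z)$), the substitution $s=t+r$ identifies the LHS with the portion of the integral appearing on the RHS restricted to the region $\{s>t\}$.

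To close the argument I would split the $s$-integration in the RHS into $\{s>t\}$ and $\{s<t\}$ and check that the involution $(u,z,t,s)\mapsto(-u,z,-t,-s)$ preserves $d\H^{n-1}(u)\,d\H^{n-1}(z)\,dt\,ds$ as well as the integrand $g(z+tu,z+su)\abs{s-t}^{n-1}$ (noting $(-u)^\perp=u^\perp$ and $z+tu=z+(-t)(-u)$), while swapping the two regions. The two halves then contribute equally, and together with the factor $\tfrac{1}{2}$ coming from the formula for $dL$ they reproduce exactly the polar-coordinate expression obtained for the LHS.

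The only genuinely subtle point is the bookkeeping of the orientation ambiguity: each unoriented line in $A(n,1)$ is represented by the two pairs $(u,z)$ and $(-u,z)$ in the parametrization of the Grassmannian, and this is precisely what produces the $\tfrac{1}{2}$ in the description of $dL$. Matching this to the asymmetry between polar coordinates (which use $r>0$) and the line parametrization (which uses all $s,t\in\R$) is the step that has to be performed carefully to avoid over- or under-counting. Once the identity is established for nonnegative measurable integrands via Fubini--Tonelli and the above change of variables, the statement follows in full generality.
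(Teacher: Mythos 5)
Your proof is correct. The paper does not actually prove this lemma; it only states it and cites [Schneider, Theorem~7.2.7], so your argument is genuinely new relative to the source at hand and worth spelling out.

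The structure — polar coordinates in $y$ about $x$, orthogonal decomposition $x=z+tu$ with $z\in u^\perp$, shear $s=t+r$, and then matching against the explicit description of $dL$ — is sound, and each change of variables has Jacobian one except for the polar factor $r^{n-1}=(s-t)^{n-1}$, which is exactly the weight appearing in the formula. The delicate point you flag is real and you handle it correctly: after the substitution the left-hand side produces only the half $\{s>t\}$ of the quadruple integral, while the right-hand side (after inserting the paper's description of $dL$) runs over all $(t,s)$ with a prefactor $\tfrac12$. Your involution $(u,z,t,s)\mapsto(-u,z,-t,-s)$ does fix the integrand $g(z+tu,z+su)\,\abs{s-t}^{n-1}$, preserves the product measure $d\H^{n-1}(u)\,d\H^{n-1}(z)\,dt\,ds$ (since $u\mapsto -u$ preserves $\H^{n-1}$ on $S^{n-1}$ and $(-u)^\perp=u^\perp$), and swaps $\{s>t\}$ with $\{s<t\}$; note that flipping $u$ is necessary here because $g$ is not assumed symmetric, so the naive swap $(t,s)\mapsto(s,t)$ would not work. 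The two halves therefore contribute equally, and the $\tfrac12$ in $dL$ compensates for the double counting of the unoriented line by $\pm u$. This gives a shorter, self-contained derivation of the $q=1$ Blaschke--Petkantschin identity than reading it off the general affine formula in Schneider, at the cost of not generalizing to higher-dimensional flats. One minor wording point: the concluding sentence about extending "to full generality" is unnecessary, since the statement already assumes $g$ nonnegative and measurable and Tonelli covers that case directly.
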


\section{Gamma convergence in the whole space}
This section is dedicated to the proof of the pointwise convergence and $\Gamma$-convergence as $s\to1^-$ of the functionals $(1-s)\pks(\cdot,\R^n)$ (c.f. \eqref{fracperdef}), namely Theorem \ref{pointwholespace} and Corollary \ref{gammawholespace}. In section \ref{charofkern} we provide a characterization of the integration kernels $k_s(\cdot):\R^n\setminus\{0\}\to\R_+$ which verify hypotheses \ref{h0}-\ref{h2}. In section \ref{onedimcase} we recall some results concerning the one-dimensional case. Finally, sections \ref{proofpointwholespace} and \ref{proofgammawholespace} are dedicated, respectively, to the proof of Theorem \ref{pointwholespace} and Corollary \ref{gammawholespace}.

\subsection{Characterization of the Kernels}
\label{charofkern}

For clarity, throughout all this section we will denote the standard Euclidean norm by $\norm{\cdot}$. Let us first recall that a set $G\subseteq \R^n$ is called $\textit{star convex}$ with respect to a point $x_0$ if for every $y\in G$ and every $t\in [0;1]$ we have that $tx_0+(1-t)y\in G$.\\
We introduce now a simple yet useful definition. 
\begin{definition}
    Let $G\subseteq\R^n$ be non empty and star convex set with respect to the origin. We say that $G$ is $\textit{radially open}$ if for every line $r$ passing through the origin of $\R^n$ we have that $G\cap r$ is open in $r$ equipped with the Euclidean topology induced by $\R^n$.
\end{definition}
\begin{proposition}
    Let $G\subseteq\R^n$ be non empty, bounded, star convex with respect to the origin of $\R^n$ and radially open with $0 \in \interior G$. Define for every $x\in\R^n$
    \begin{equation*}
        |x|_G:=\inf \left\{r>0:\frac{x}{r}\in G \right\}
    \end{equation*}
    Then:
    \begin{enumerate}
        \item For every $\lambda>0$ $|\lambda x|_G=\lambda|x|_G$
        \item $|x|_G=0$ if and only if $|x|=0$
        \item There exists $\tau\geq 1$ such that for every $x\in\R^n$
        \begin{equation*}
            \frac{1}{\tau}\norm{x}\leq |x|_G \leq \tau \norm{x}.
        \end{equation*}
    \end{enumerate}
    Moreover $G=\left\{ x\in\R^n:|x|_G<1\right\}$.
\end{proposition}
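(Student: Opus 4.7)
The plan is to handle the three numbered claims first, then the characterization $G=\{x:|x|_G<1\}$ at the end, since the last assertion is the only one that genuinely requires the radial openness hypothesis. Claim (1) is a direct change of variables: for $\lambda>0$, the substitution $s=r/\lambda$ gives
\[
\{r>0:\lambda x/r\in G\}=\lambda\{s>0:x/s\in G\},
\]
so passing to the infimum yields $|\lambda x|_G=\lambda|x|_G$.

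Next I would prove claim (3), which will then imply (2) as a byproduct. For the upper bound, since $0\in\interior G$ there exists $\rho>0$ with $B_\rho\sub G$; for $x\neq0$ the point $\rho x/\|x\|$ lies in $B_\rho\sub G$, so taking $r=\|x\|/\rho$ gives $|x|_G\le\|x\|/\rho$. For the lower bound, boundedness provides $M>0$ with $G\sub B_M$; if $x/r\in G$ then $\|x\|\le M r$, hence $|x|_G\ge\|x\|/M$. Setting $\tau:=\max(M,1/\rho)\ge1$ concludes (3). Claim (2) is then immediate: $|0|_G=0$ since $0\in G$, and conversely $|x|_G=0$ forces $\|x\|\le\tau|x|_G=0$.

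For the final characterization, I would argue by double inclusion using star convexity for one direction and radial openness for the other. If $|x|_G<1$, pick $r\in(0,1)$ with $x/r\in G$; then star convexity with respect to the origin (with $s=r\in[0,1]$) gives $x=r\cdot(x/r)\in G$. Conversely, if $x\in G$, note first that $x=0$ is trivial since $|0|_G=0<1$; for $x\neq 0$, let $\ell$ be the line through the origin and $x$. By radial openness, $G\cap\ell$ is open in $\ell$, and since $x\in G\cap\ell$ there exists $\delta>0$ such that $(1+\delta)x\in G$. Then $x/r\in G$ for $r=1/(1+\delta)$, so $|x|_G\le 1/(1+\delta)<1$.

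The proof is essentially routine bookkeeping; the only subtle point, and the one I would flag as the key step, is the use of radial openness to upgrade $|x|_G\le 1$ to the strict inequality $|x|_G<1$ for $x\in G$, since without that hypothesis one could only obtain $G\sub\{x:|x|_G\le1\}$.
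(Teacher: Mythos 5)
Your proof is correct and reaches the same conclusions. The one genuine structural difference is in item (3): the paper first establishes the characterization $G=\{x:|x|_G<1\}$ and then obtains the two-sided comparison with $\norm{\cdot}$ by a closure argument on the level set $\{|x|_G=1\}$ (showing such $x$ lie in $\overline{G}\subseteq\overline{B(0,M)}$ and using homogeneity), whereas you bound $|x|_G$ directly from the sandwich $B_\rho\subseteq G\subseteq B_M$ without needing the set identity as an intermediate step; your route is marginally more self-contained and also makes the logical order (norm equivalence before set characterization) cleaner. Note that $\max(M,1/\rho)\ge 1$ does hold automatically, since $\rho<1$ forces $1/\rho>1$ while $\rho\ge1$ forces $M\ge\rho\ge1$, so no adjustment is needed there. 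The remaining steps — homogeneity by rescaling, deducing (2) from (3), and the double-inclusion proof of $G=\{x:|x|_G<1\}$ using star convexity in one direction and radial openness in the other — coincide with the paper's, and you correctly isolate radial openness as the hypothesis required only to upgrade $|x|_G\le 1$ to the strict inequality for $x\in G$.
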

\begin{proof}
    To prove that $G=\left\{ x\in\R^n:|x|_G<1\right\}$, start by choosing $x\in G$. Since $G$ is radially open there exists $r<1$ such that $\frac{x}{r} \in G$, which implies, by the definition of $\gvalue{\cdot}$, that $\gvalue{x}<1$, proving one of the inclusions. Vice versa, if $\gvalue{x}<1$, since $G$ is star convex with respect to the origin, for every $r\in (\gvalue{x};+\infty)$ we have that $\frac{x}{r}\in G$. In particular, taking $r=1$, we get $x\in G$.\\
    Statements $\textit{(1)}$ and $\textit{(2)}$ follow directly from the definition of $\gvalue{\cdot}$. Let us prove $\textit{(3)}.$ Suppose $B(0,m)\sub G \sub B(0,M)$ for some $0<m<M<+\infty$. For every $x\in \R^n$ such that $\gvalue{x}=1$, $x\in\overbar{G}$. Indeed, for every $x$ such that $\gvalue{x}=1$ the sequence
    \begin{equation*}
        x_n=\frac{x}{1+\frac{1}{n}}
    \end{equation*}
    lies in $G$ and converges to $x$ as $n\to+\infty$.
    So if $\gvalue{x}=1$ we have that $x\in\overbar{G}\sub\overbar{B(0,M)}$, meaning that:
    \begin{equation*}
        \frac{\norm{x}}{M}\leq1=\gvalue{x}.
    \end{equation*}
    For every $x\in\R^n$, $x\neq 0$, there exist an unique $\lambda>0$ such that $\gvalue{\lambda x}=1$. We can then write:
    \begin{equation*}
        \frac{\norm{\lambda x}}{M}\leq1=\gvalue{\lambda x}.
    \end{equation*}
    Using the $1$-homogeneity we get:
     \begin{equation*}
        \frac{\norm{x}}{M}\leq\gvalue{x}.
    \end{equation*}
    Moreover, since $B(0,m)\sub G$, if $\gvalue{x}= 1$ then $\norm{x}\geq m$,
    which can be written as:
    \begin{equation*}
        \frac{\norm{x}}{m}\geq1.
    \end{equation*}
    By homogeneity, for every $x\in \R^n$, $x\neq 0$:
    \begin{equation*}
        \frac{\norm{x}}{m}\geq\gvalue{x}.
    \end{equation*}
    Taking now $\tau=\max\left\{\frac{1}{m},M\right\}$ we conclude.\\
\end{proof}
From now on we will call a set $G$ verifying the hypotheses of the proposition above a $\textit{quasi-ball}$ and a map $|\cdot|:\R^n\to\R_+$ fulfilling properties $\textit{(1)}$, $\textit{(2)}$ and $\textit{(3)}$ a $\textit{quasi-norm}$.\\
In the proposition above we have just showed that a quasi-ball in $\R^n$ induces a quasi-norm in a natural way. As we could expect, the converse also holds true. 
\begin{proposition}
    Let $|\cdot |:\R^n\to \R_+$ be such that:
    \begin{enumerate}
        \item For every $\lambda>0$ $|\lambda x|=\lambda|x|_G$
        \item $|x|=0$ if and only if $x=0$
        \item There exists $\tau\geq 1$ such that for every $x\in\R^n$
        \begin{equation*}
            \frac{1}{\tau}\norm{x}\leq |x| \leq \tau \norm{x}.
        \end{equation*}
    \end{enumerate}
    Then there exists a bounded, star convex with respect to the origin of $\R^n$, radially open set $G\sub \R^n$ with $0\in \interior G$ such that for every $x\in\R^n$
    \begin{equation*}
        |x|=\gvalue{x}:=\inf\left\{r>0:\frac{x}{r}\in G\right\}.
    \end{equation*}
    Moreover $G=\left\{x\in\R^n:|x|<1\right\}$.
\end{proposition}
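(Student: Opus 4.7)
The natural plan is to define $G$ directly as the unit "quasi-ball" of the given quasi-norm, namely $G:=\{x\in\R^n:\abs{x}<1\}$, and then verify in turn each of the properties (boundedness, star convexity, radial openness, $0\in\interior G$) and check that the quasi-norm induced by $G$ reproduces $\abs{\cdot}$. Since the previous proposition already shows that $\gvalue{\cdot}$ determines its defining set by the rule $G=\{\gvalue{x}<1\}$, the formula $G=\{\abs{x}<1\}$ will be automatic once we prove $\abs{x}=\gvalue{x}$.

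First I would read off boundedness and interiority of the origin straight from condition (3): if $\abs{x}<1$ then $\norm{x}\leq \tau\abs{x}<\tau$, giving $G\sub B(0,\tau)$; conversely if $\norm{x}<1/\tau$ then $\abs{x}\leq \tau\norm{x}<1$, giving $B(0,1/\tau)\sub G$, so $0\in\interior G$. Next I would verify star convexity from (1): for $y\in G$ and $t\in[0,1]$, the point $(1-t)y$ (which is $t\cdot 0+(1-t)y$) satisfies $\abs{(1-t)y}=(1-t)\abs{y}<1$, hence $(1-t)y\in G$. For radial openness, any line through the origin can be written as $\{su:s\in\R\}$ for some $u\in S^{n-1}$; by (1), $G$ intersects this line in $\{su:\abs{s}<1/\abs{u}\}$ (which is nonempty and well defined since $\abs{u}>0$ by (2) combined with (3)), an open interval in the line.

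It remains to show that $\abs{x}$ and $\gvalue{x}$ agree. By (1), for any $x\neq 0$ and $r>0$ we have $x/r\in G$ if and only if $\abs{x/r}=\abs{x}/r<1$, i.e.\ if and only if $r>\abs{x}$; taking the infimum over such $r$ yields $\gvalue{x}=\abs{x}$. The case $x=0$ is handled by (2) and the corresponding property for $\gvalue{\cdot}$ established in the previous proposition. The final assertion $G=\{x:\abs{x}<1\}$ is then just the definition of $G$.

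There is no genuinely hard step here: the whole statement is the converse direction to the previous proposition, and condition (3) (equivalence with the Euclidean norm) is precisely what is needed to translate geometric properties of $G$ (boundedness, containing a neighborhood of $0$) into analytic properties of $\abs{\cdot}$ and back. The only point that requires a small moment of care is the radial openness, because this relies on the strict homogeneity property (1) rather than on any continuity of $\abs{\cdot}$, which is not part of the hypotheses.
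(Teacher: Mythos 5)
Your argument is correct and takes essentially the same route as the paper: define $G:=\{|x|<1\}$, read boundedness and $0\in G^{\mathsf{o}}$ off condition (3), get star convexity and radial openness from the positive $1$-homogeneity (1), and then verify $|\cdot|=|\cdot|_G$ directly from (1); the paper does the same, except it normalizes to the level set $\{|\lambda x|=1\}$ in the last step where you compute the infimum directly, and it dismisses star convexity and radial openness as "straightforward" where you spell them out. One small imprecision worth noting: in the radial-openness step you write the intersection of $G$ with the line $\{su:s\in\R\}$ as $\{su:|s|<1/|u|\}$, but since origin symmetry of $|\cdot|$ is \emph{not} among hypotheses (1)--(3), the correct description is $\{su:-1/|-u|<s<1/|u|\}$; this is still an open interval, so the conclusion you need is unaffected, but the displayed formula tacitly assumes $|u|=|-u|$.
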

\begin{proof}
    \begin{onehalfspace}
    Let $G:=\left\{x\in\R^n:|x|<1\right\}$. If $\norm{x}<\frac{1}{\tau}$, then $|x|\leq \tau \norm{x}<1$, so $B(0,\frac{1}{\tau})\sub G$. Moreover, if $x\in G$ then $\frac{1}{\tau}\leq|x|<1$, so $G\sub B(0,\tau)$.
    Thus, $G$ is bounded and $0\in\interior G$. \\
    The proof of the fact that $G$ is star convex w.r.t. the origin and radially open is straightforward.\\
    We now show that $|\cdot|=\gvalue{\cdot}$. For every $x\in \R^n$ there exists a unique $\lambda>0$ such that $|\lambda x|=1$. For every $r>1$ then $\abs{\frac{lx}{r}}<1$, so $\gvalue{\lambda x}\leq 1$. If otherwise $0<r<1$ then $\frac{\lambda x}{r}\notin G$, so $\abs{\lambda x}=1=\gvalue{\lambda x}$. Since both maps are positively 1-homogeneous we conclude that $\abs{x}=\gvalue{x}$.
    \end{onehalfspace}
\end{proof}
We now show that our kernels $\ks(\cdot)$ can be characterized in terms of quasi-norms.
\begin{proposition}\label{charnuc}
    Let $\ks:\R^n\setminus\{0\}\to\R_+$, $s\in(0;1)$, be measurable, origin symmetric and such that it verifies \eqref{h1} and \eqref{h2}. Then there exists an origin symmetric quasi-norm $\abs{\cdot}:\R^n\to\R_+$ such that for every $z\in\R^n$, $z\neq0$:
    \begin{equation*}
        \ks(z)=\frac{1}{\abs{z}^{n+s}}.
    \end{equation*}    
\end{proposition}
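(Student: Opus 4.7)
The plan is to produce the quasi-norm explicitly by inverting the relation $k_s(z)=1/|z|^{n+s}$. For each fixed $s\in(0,1)$ I would set
\begin{equation*}
    |z|_s := k_s(z)^{-1/(n+s)} \qquad \text{for } z\neq 0, \qquad |0|_s := 0,
\end{equation*}
and then verify that this map is an origin-symmetric quasi-norm in the sense of the previous proposition. The symmetry $|-z|_s=|z|_s$ is immediate from hypothesis \ref{h0}. Positive $1$-homogeneity follows from \ref{h1}: for every $\lambda>0$ and $z\neq 0$,
\begin{equation*}
    |\lambda z|_s = k_s(\lambda z)^{-1/(n+s)} = \bigl(\lambda^{-n-s}k_s(z)\bigr)^{-1/(n+s)} = \lambda\,k_s(z)^{-1/(n+s)} = \lambda\,|z|_s .
\end{equation*}
Definiteness ($|z|_s=0\iff z=0$) is also immediate, since $k_s$ takes values in $\R_+$, hence is finite and strictly positive on $\R^n\setminus\{0\}$.

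The only property that still requires a genuine computation is the two-sided bound by the Euclidean norm, and this is exactly the content of \ref{h2}. Indeed, raising the inequalities in \ref{h2} to the power $-1/(n+s)$ reverses them and gives
\begin{equation*}
    c^{-1/(n+s)}\,\|z\| \;\leq\; k_s(z)^{-1/(n+s)} \;\leq\; c^{1/(n+s)}\,\|z\|,
\end{equation*}
so setting $\tau := c^{1/(n+s)}\geq 1$ (using $c\geq 1$) yields $\tau^{-1}\|z\|\leq |z|_s\leq \tau\|z\|$, which is property \textit{(3)} of a quasi-norm. Thus $|\cdot|_s$ fulfills all three conditions of the preceding proposition and is therefore an origin-symmetric quasi-norm on $\R^n$.

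Finally, by construction, $k_s(z) = |z|_s^{-(n+s)}$ for every $z\neq 0$, which is the required identity. Note that by the earlier bijection between quasi-norms and quasi-balls, $|\cdot|_s$ is induced by the star-convex, radially open set $G_s=\{x\in\R^n:k_s(x)>1\}=\{x\in\R^n:|x|_s<1\}$, so the characterization is completely concrete. No essential obstacle arises here: the only point deserving a moment of care is the correct bookkeeping of the exponent $-1/(n+s)$ when translating \ref{h2} into the Euclidean bound on $|\cdot|_s$, and the observation that the resulting quasi-norm is allowed to depend on $s$ (the statement asks only for existence for each individual $s$).
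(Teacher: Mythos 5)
Your proof is correct and takes exactly the same approach as the paper: define $|z|:=k_s(z)^{-1/(n+s)}$ for $z\neq 0$ and $|0|:=0$, then read off symmetry, $1$-homogeneity, and the two-sided Euclidean bound directly from hypotheses \ref{h0}, \ref{h1}, and \ref{h2}. The paper simply states this verification is straightforward, whereas you carried it out explicitly, which is a welcome bit of extra detail.
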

\begin{proof}
    Define:
    \begin{equation*}
        \abs{z}=
        \begin{cases}
            \ks(z)^{-\frac{1}{n+s}} & \text{if } z\neq 0\\
            0 & \text{if } z=0.
        \end{cases}
    \end{equation*}
    It is straightforward to check that this is the origin-symmetric symmetric quasi-norm requested.
\end{proof}
$\remark$ According to the previous results if $G:=\left\{z\in R^n: z\neq 0,\ks(z)>1\right\}\cup \{0\}$, then $\abs{x}=\abs{x}_G$, $G=\{\abs{x}_G<1\}$ and:
\begin{equation*}
    \ks(z)=\frac{1}{\abs{z}_G^{n+s}}.
\end{equation*}
Since $G$ need not be convex in general, $\abs{\cdot}_G$ is not necessarily a norm.

\subsection{One-dimensional Case}
\label{onedimcase}
Following the steps of \cite{Ludwig}, we start by focusing on the one dimensional case, since it will be of great use later. Let us recall the definition of $s$-fractional perimeter $P_s(E,\R)$ of a Borel set $E\sub\R$ for $s\in (0,1)$:
\begin{equation*}
    P_s(E,\R^n):= \int_E\int_{E^c} \frac{1}{\abs{x-y}^{s+1}}dxdy,
\end{equation*}
where $\abs{\cdot}$ denotes here the usual norm on $\R$.
The following proposition was first proved by Ludwig in \cite{Ludwig}.
\begin{proposition}[\cite{Ludwig}, Lemma 1]
    \label{lemma1ludw}
    Let $E\sub \R$ be a bounded measurable set of finite perimeter. Then:
    \begin{equation}
        \label{a1}
        \lim_{s\to1^-}(1-s)P_s(E,\R)=\H^0(\partial^*E)
    \end{equation}
    and for $\frac{1}{2}\leq s<1$:
    \begin{equation}
        \label{a2}
        (1-s)P_s(E,\R)\leq 8\H^0(\partial^*E)\max\{1,\mathrm{diam}(E)\}.
    \end{equation}
\end{proposition}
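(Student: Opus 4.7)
The plan is to invoke the one-dimensional structure theorem for sets of finite perimeter (recalled immediately above the statement) and reduce everything to an explicit calculation on a finite disjoint union of intervals. Since $E\sub\R$ is bounded and has finite perimeter, $E$ coincides up to an $\L^1$-null set with a finite disjoint union $\bigsqcup_{i=1}^N I_i$, where $I_i=(a_i,b_i)$ and $a_1<b_1<a_2<b_2<\cdots<a_N<b_N$; in particular $\H^0(\partial^*E)=2N$, and the intervals $I_i,I_j$ sit at mutually positive distances $d_{ij}>0$.

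Next I would compute $P_s$ for a single interval. By symmetry of the integrand and a direct substitution $u=y-x$,
\begin{equation*}
P_s(I_i,\R)=2\int_{a_i}^{b_i}\int_{b_i}^{\infty}\frac{1}{(y-x)^{s+1}}\,dy\,dx=\frac{2(b_i-a_i)^{1-s}}{s(1-s)},
\end{equation*}
so $(1-s)P_s(I_i,\R)=2(b_i-a_i)^{1-s}/s\to 2$ as $s\to 1^-$. To pass from a single interval to the whole of $E$, I would exploit the symmetry of the integration kernel together with the disjointness of the $I_i$'s: expanding $\sum_i P_s(I_i,\R)=\sum_i \I_s(I_i,I_i^c)$ and splitting $I_i^c=E^c\sqcup\bigsqcup_{j\ne i}I_j$ yields
\begin{equation*}
\sum_{i=1}^N P_s(I_i,\R)=P_s(E,\R)+2\sum_{i<j}\I_s(I_i,I_j).
\end{equation*}
Each cross term admits the uniform-in-$s$ bound $\I_s(I_i,I_j)\le |I_i||I_j|/d_{ij}^{s+1}$, so $(1-s)\I_s(I_i,I_j)\to 0$ as $s\to 1^-$.

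Combining these two pieces gives $(1-s)P_s(E,\R)\to 2N=\H^0(\partial^*E)$, which is \eqref{a1}. For the bound \eqref{a2}, I would drop the nonnegative cross terms to obtain $P_s(E,\R)\le\sum_i P_s(I_i,\R)$, then use $1/s\le 2$ for $s\ge 1/2$ together with the elementary estimate $(b_i-a_i)^{1-s}\le\max\{1,\mathrm{diam}(E)\}$ to conclude
\begin{equation*}
(1-s)P_s(E,\R)\le\sum_{i=1}^N\frac{2(b_i-a_i)^{1-s}}{s}\le 4N\max\{1,\mathrm{diam}(E)\}=2\H^0(\partial^*E)\max\{1,\mathrm{diam}(E)\},
\end{equation*}
which is stronger than \eqref{a2}. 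The only mildly delicate point is the cross-term decomposition and the verification that $(1-s)\I_s(I_i,I_j)\to 0$; once these are in place the proof is essentially explicit one-dimensional calculus on a finite partition.
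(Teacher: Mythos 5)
Your proof is correct and in fact yields a sharper constant in \eqref{a2}: you obtain $2\H^0(\partial^*E)\max\{1,\mathrm{diam}(E)\}$ rather than $8\H^0(\partial^*E)\max\{1,\mathrm{diam}(E)\}$. Both your argument and the paper's (which follows Ludwig) reduce to an explicit one-dimensional computation over a finite disjoint union of intervals, but the decompositions are dual to one another. The paper splits $E^c$ into the bounded gaps $J_1,\ldots,J_{m-1}$ and the two unbounded tails $J_0,J_m$, writes $P_s(E,\R)=\sum_i\I_s(E,J_i)$, and handles the tails (contributing $1$ each to the rescaled limit) and the gaps (contributing $2$ each) separately. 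You instead split $E=\bigsqcup_i I_i$ itself, compute $P_s(I_i,\R)=2|I_i|^{1-s}/(s(1-s))$ in closed form, and transfer back to $E$ via the locality-defect identity $\sum_i P_s(I_i,\R)=P_s(E,\R)+2\sum_{i<j}\I_s(I_i,I_j)$, killing the cross terms with the crude bound $\I_s(I_i,I_j)\le|I_i||I_j|/d_{ij}^{s+1}$. Your route is a bit cleaner: the single-interval perimeter is a tidy closed form (whereas the paper's $\I_s(E,J_i)$ still sees all of $E$), and the cross-term cancellation makes the structure transparent; the paper's route avoids writing the algebraic identity and instead spends effort distinguishing the boundary behaviour of tails versus gaps. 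One small point worth making explicit, as the paper does: changing $E$ on an $\L^1$-null set alters neither $P_s(E,\R)$ nor $\partial^*E$, so passing to the canonical representative $\bigsqcup_i I_i$ is harmless.
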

The next result is a $\Gamma$-liminf inequality for $(1-s)P_s(\cdot,\R)$, which, combined with \eqref{a1}, implies the $\Gamma$-convergence. This result is the one-dimensional case of \cite[Theorem 2]{Ambrosio} and can be proved avoiding the explicit computations of the geometric constant needed in the general case. We postpone the proof to section \ref{sectiondim1}.
\begin{proposition}
    \label{1dliminf}
    Let $s_i\to1^-$ and $E_i$, $E\sub \R$ be bounded measurable sets such that $E_i\to E$ in $L^1_{loc}(\R)$. Then:
    \begin{equation}
        \liminf_{i\to\infty}(1-s_i)P_{s_i}(E_i,\R)\geq P(E,\R).
    \end{equation}
\end{proposition}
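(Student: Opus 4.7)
The plan is to localize around the finitely many points of the reduced boundary $\partial^*E$ and, on each localizing interval, apply a one-dimensional Bourgain--Brezis--Mironescu-type $\Gamma$-liminf inequality for the Gagliardo seminorm. In dimension one the associated geometric constant equals $1$, which can be identified a posteriori from the pointwise asymptotic in Proposition \ref{lemma1ludw} without any explicit computation. This is exactly the simplification over \cite{Ambrosio} advertised in the introduction.

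First, assume $\alpha:=\liminf_i(1-s_i)P_{s_i}(E_i,\R)<+\infty$ (else there is nothing to prove) and pass to a subsequence realizing $\alpha$; extract a further subsequence along which $\chi_{E_i}\to\chi_E$ pointwise $\L^1$-a.e. If $P(E,\R)=+\infty$, the argument below applied with arbitrarily many reduced boundary points will produce arbitrarily large lower bounds, contradicting the finiteness of $\alpha$; otherwise set $m:=P(E,\R)$ and write $\partial^*E=\{y_1<\dots<y_m\}$. For $r>0$ small the open intervals $I_j:=(y_j-r,y_j+r)$ are pairwise disjoint, and since the product sets $E_i\cap I_j\times E_i^c\cap I_j$ are then disjoint subsets of $E_i\times E_i^c$, the positivity of the kernel gives
\[
(1-s_i)P_{s_i}(E_i,\R)\ \ge\ \sum_{j=1}^{m}(1-s_i)\I_{s_i}(E_i\cap I_j,\,E_i^c\cap I_j).
\]
Thus it suffices to prove the localized inequality
\[
\liminf_{i\to\infty}(1-s_i)\I_{s_i}(E_i\cap I_j,\,E_i^c\cap I_j)\ \ge\ 1-\eta(r),
\]
with $\eta(r)\to 0$ as $r\to 0$; summing and then letting $r\to 0$ yields $\alpha\ge m=P(E,\R)$.

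Since $2\I_{s_i}(E_i\cap I_j,E_i^c\cap I_j)=[\chi_{E_i}]_{W^{s_i,1}(I_j)}$ and $\chi_{E_i}\to\chi_E$ in $L^1(I_j)$, the localized inequality is a one-dimensional BBM $\Gamma$-liminf for the Gagliardo seminorm on the bounded interval $I_j$, with limit BV total variation $\lVert D\chi_E\rVert(I_j)=1\pm\eta(r)$ thanks to the near-halfspace structure of $E$ at the reduced boundary point $y_j$. The plan is to prove this via mollification: convolve $\chi_{E_i}$ with a smooth radial kernel $\rho_{\epsilon_i}$ whose scale $\epsilon_i\to 0$ is tuned so that $\epsilon_i^{s_i}\sim(1-s_i)$; then the pointwise estimate $|\rho_\epsilon'(t)|\le C_\rho\,\epsilon^s/|t|^{1+s}$, combined with the symmetrization identity $(\chi_{E_i}*\rho_{\epsilon_i})'(x)=\int[\chi_{E_i}(y)-\chi_{E_i}(x)]\rho_{\epsilon_i}'(x-y)\,dy$ (using $\int\rho_{\epsilon_i}'=0$), yields $\lVert(\chi_{E_i}*\rho_{\epsilon_i})'\rVert_{L^1}\le C_\rho\,(1-s_i)\,[\chi_{E_i}]_{W^{s_i,1}(I_j)}$. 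Since $v_i:=\chi_{E_i}*\rho_{\epsilon_i}\to\chi_E$ in $L^1_{loc}$, the lower semicontinuity of the total variation delivers the desired lower bound up to the multiplicative constant $2C_\rho$.

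The hard part is pinning down the sharp multiplicative constant $1$ rather than the rough $2C_\rho>0$ produced by the mollification. This is where the dimension-one setting becomes advantageous: one avoids computing any geometric quantity (such as $\omega_{n-1}$) by testing the whole scheme against the halfspace $H=\{x_1\le 0\}\cap I_j$, for which Proposition \ref{lemma1ludw} independently gives the exact pointwise limit $\slim(1-s)\I_s(H,I_j\setminus H)=1$. Matching the constant produced by the mollification scheme against this known value forces it to be precisely $1$, at which point summing over the disjoint $I_j$ and letting $r\to 0$ concludes the proof.
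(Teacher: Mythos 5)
The localization step in your proposal — splitting $(1-s_i)P_{s_i}(E_i,\R)$ over disjoint intervals $I_j$ around the finitely many points of $\partial^*E$, using that the products $(E_i\cap I_j)\times(E_i^c\cap I_j)$ are disjoint — is sound and is essentially the same reduction the paper performs with the intervals $U(x_0,r)$. The mollification estimate is also correct as a bound: tuning $\epsilon_i^{s_i}\sim(1-s_i)$ and using $|\rho_\epsilon'(t)|\le C_\rho\,\epsilon^s/|t|^{1+s}$ together with lower semicontinuity of total variation does give
\begin{equation*}
\liminf_{i\to\infty}(1-s_i)\,\I_{s_i}(E_i\cap I_j,E_i^c\cap I_j)\ \ge\ \frac{1}{2C_\rho}\,\|D\chi_E\|(I_j),
\end{equation*}
with $C_\rho$ depending on the chosen mollifier. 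So far, so good.

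The gap is in the final step, where you propose to ``pin down'' the constant $\tfrac{1}{2C_\rho}$ to be exactly $1$ by testing the scheme against the halfspace and invoking Proposition~\ref{lemma1ludw}. This does not work, and the failure is structural, not technical. Set $\gamma$ to be the $\Gamma$-liminf constant in question, i.e. the infimum of $\liminf_i(1-s_i)\I_{s_i}(F_i\cap I,F_i^c\cap I)$ over sequences $F_i\to H$ in $L^1(I)$. Proposition~\ref{lemma1ludw} evaluates the pointwise limit along the constant sequence $F_i=H$, and this gives the one-sided bound $\gamma\le 1$ — it provides an \emph{upper} bound for an infimum. Your mollification argument gives a \emph{lower} bound $\gamma\ge(2C_\rho)^{-1}$. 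Plugging the halfspace into the chain of inequalities only tells you $1\ge(2C_\rho)^{-1}$, i.e.\ $C_\rho\ge\tfrac12$, which is true for any mollifier and carries no information. Neither ingredient, nor their combination, can deliver the needed direction $\gamma\ge 1$. Some form of extremality of the halfspace for the fractional energy is required, and that is precisely the ingredient the paper supplies: after passing through the gluing reduction (Lemma~\ref{gammaequal}, which lets one restrict to competitors $F_i$ agreeing with $H$ near $\partial U(0,1)$), the paper invokes the minimality of halfspaces from \cite[Proposition~17]{Ambrosio} to get $(1-s_i)P_{s_i}(F_i,U(0,1))\ge(1-s_i)P_{s_i}(H_1,U(0,1))$, then the pointwise limit is applied to the \emph{right-hand side}, not the left. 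Without that minimality input, the sharp constant $1$ is out of reach via your proposed route, and you are left with a lower bound $(2C_\rho)^{-1}\,P(E,\R)$ that is strictly weaker than the claim for any admissible mollifier.
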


\subsection{Proof of Theorem \ref{pointwholespace}}
\label{proofpointwholespace}
    From Proposition \ref{charnuc} we get that for every $s\in(0,1)$ there exists an origin symmetric quasi-norm $\abs{\cdot}_{k_s}:\R^n\to\R_+$ such that for every $z\neq 0$:
    \begin{equation*}
        k_s(z)=\frac{1}{\abs{z}_{k_s}^{n+s}}.
    \end{equation*}
    It is easy to check that the pointwise convergence implies that also $k(\cdot)$ is an origin symmetric nucleus that verifies \eqref{h1} and \eqref{h2} with $s=1$ and the same constant $c$. Let us denote with $\abs{\cdot}_k$ the associated quasi-norm. Clearly, for every $z\neq 0$:
    \begin{equation*}
        \frac{1}{\abs{z}_{k_s}^{n+s}}\longrightarrow\frac{1}{\abs{z}_{k}^{n+1}}
    \end{equation*}
    as $s\to1^-$.\\
    For every $x,y\in\R^n$, $x\neq y$, call:
    \begin{equation*}
        u(x,y)\coloneq \frac{x-y}{\norm{x-y}}\in S^{n-1}.
    \end{equation*}
    Since $\abs{\cdot}_{k_s}$ is origin symmetric and positively $1$-homogeneous, calling $l$ the affine line in $\R^n$ that contains $x-y$, we can write:
    \begin{equation*}
        \abs{x-y}_{k_s}=\norm{x-y}\abs{u(l)}_{k_s},
    \end{equation*}
    the term $\abs{u(l)}_{k_s}$ only depending on $l$. By the Blaschke-Petkantschin formula:
    \begin{equation*}
        \begin{split}
            P_{k_s}(E,\R^n) & = \int_{E}\int_{E^c}\frac{1}{\abs{x-y}_{k_s}^{n+s}}dxdy\\
            & = \int_{E\cap l\neq \emptyset}\frac{1}{\abs{u(l)}_{k_s}^{n+s}}\left(\int_{E\cap l}\int_{E^c\cap l}\frac{1}{\norm{x-y}^{s+1}}d\H^1(x)d\H^1(y)\right )dL
        \end{split}
    \end{equation*}
    where $\left\{E\cap l \neq \emptyset\right\}=\left\{l\in A(n,1):l\cap E\neq\emptyset\right\}$.
    Since for $L$-almost every $l\in A(n,1)$ such that $E\cap l\neq \emptyset$ it holds that $E\cap l$ has finite perimeter and $\partial^*(E\cap l)=\partial^*E \cap l$ (see \cite[Proposition 14.5, Theorem 18.11 and remark 18.13]{Maggi}), we have by proposition \ref{lemma1ludw}  that:
    \begin{equation*}
        \slim (1-s)\int_{E\cap l}\int_{E^c\cap l}\frac{1}{\norm{x-y}^{s+1}}d\H^1(x)d\H^1(y)=\H^0(\partial^*E\cap l).
    \end{equation*}
    Moreover, the following estimates hold true for $1/2\leq s< 1$:
    \begin{equation*}
        \begin{split}
            \int_{E\cap l}\int_{E^c\cap l}\frac{1}{\norm{x-y}^{s+1}}d\H^1(x)d\H^1(y) & \leq 8\H^0(\partial^*E\cap l)\max\left\{1,\text{diam}(E\cap l)\right\},\\
            \frac{1}{\abs{u(l)}_{k_s}^{n+s}} &\leq c\frac{1}{\norm{u(l)}^{n+s}}=c
        \end{split}
    \end{equation*}
    where $c\geq1$ is the constant in \eqref{h2}.\\
    For $u\in S^{n-1}$, let us denote with $E|u^\perp$ the projection of $E$ on $u^\perp$ and with $r(u)$ the line through the origin containing $u$. 
    By the dominated convergence theorem:
    \begin{equation*}
        \begin{split}
            \slim(1-s)P_{k_s}(E,\R^n) & =\int_{E\cap l \neq \emptyset}\frac{\H^0(\partial^*E\cap l)}{\abs{u(l)}_k^{n+1}}dL\\
            & = \frac{1}{2}\int_{S^{n-1}}\int_{E|u^\perp}\frac{\H^0(\partial^*E\cap (y+ r(u))}{\abs{u}_k^{n+1}}d\H^{n-1}(y)d\H^{n-1}(u)\\
            & = \frac{1}{2}\int_{S^{n-1}}\int_{\partial^*E}\frac{\abs{u\cdot \nu_E(x)}}{\abs{u}_k^{n+1}}d\H^{n-1}(x)d\H^{n-1}(u).
        \end{split}
    \end{equation*}
    The last step is due to \cite[Theorem 1]{Wieacker}, which is the measure theoretic equivalent of the formula for the area of the projection of smooth surfaces on planes.\\
    Let us call now $B_k=\left\{x:\abs{x}_k<1\right\}$. Since $B_k$ is bounded and star convex with respect to the origin we can compute:
    \begin{equation*}
        \begin{split}
        \frac{1}{2}\int_{S^{n-1}}\frac{\abs{u\cdot \nu_E(x)}}{\abs{u}_k^{n+1}}d\H^{n-1}(u) & =   \frac{n+1}{2}\int_{S^{n-1}}\frac{\abs{u\cdot \nu_E(x)}}{\abs{u}_k^{n+1}}\frac{1}{n+1}d\H^{n-1}(u)\\
        & = \frac{n+1}{2}\int_{S^{n-1}}\left(\int_0^{1/\abs{u}_k}\abs{u \cdot \nu_E(x)}t^{n}dt\right )d\H^{n-1}(u)\\
        & = \frac{n+1}{2}\int_{B_k}\abs{y\cdot\nu_E(x)}dy
        \end{split}
    \end{equation*}
    Thus we obtain:
    \begin{equation*}
        \slim(1-s)P_{k_s}(E,\R^n) = \int_{\partial^*E}\frac{n+1}{2}\int_{B_k}\abs{y\cdot\nu_E(x)}dy\;d\H^{n-1}(x).
    \end{equation*}
    We define now for $v\in\R^n$:
    \begin{equation*}
        \label{starnorm}
        \abs{v}_{Z^*B_k}:= \frac{n+1}{2}\int_{B_k}\abs{x\cdot v}dx.
    \end{equation*}
    $\abs{\cdot}_{Z^*B_k}$ is a norm on $\R^n$. Let us prove that if $\abs{v}_{Z^*B_k}=0$ for some $v\in R^n$ then $v=0$. All the other properties of a norm follow directly from the definition. Suppose $v\neq 0$. Since $0\in\interior{B_k}$ there exists $\lambda>0$ such that $\lambda v\in \interior{B_k}$. Let $\delta>0$ be such that $B(\lambda v,\delta)\sub B_k$ and for all $x\in B(\lambda v,\delta)$ $\abs{\lambda v \cdot x}>0$. Then:
    \begin{equation*}
         \lambda \abs{v}_{Z^*B_k}=\abs{\lambda v}_{Z^*B_k} =  \frac{n+1}{2}\int_{B_k}\abs{x\cdot \lambda v}dx \geq\frac{n+1}{2}\int_{B(\lambda v,\delta)}\abs{x\cdot \lambda v}dx>0,
    \end{equation*}
    which gives a contradiction. Since $\abs{\cdot}_{Z^*B_k}$ is a norm on $\R^n$ the set $Z^*B_k:=\left\{v\in\R^n: \abs{v}_{Z^*B_k}\leq 1\right\}$ is a convex body in $\R^n$ symmetric with respect to the origin. Let us define $ZB_k:=(Z^*B_k)^*$. Then $ZB_k$ is a convex body symmetric with respect to the origin, its polar body is $(ZB_k)^*=(Z^*B_k)^{**}=Z^*B_k$ and:
    \begin{equation*}
        \slim(1-s)P_{k_s}(E,\R^n) = \int_{\partial^*E}\abs{\nu_E(x)}_{Z^*B_k}d\H^{n-1}(x)=\pzbk(E,\R^n).
    \end{equation*}        
\remark The moment body is defined in general for a $\textit{star body}$, that's to say compact set, which is star convex with respect to the origin and such that its radial function is continuous (see \cite[Sections 1.7, 10.8]{Schneiderconvex} for an exhaustive description of star and moment bodies). Since in our case the set $B_k$ is not a star body in general, $ZB_k$ must be intended here only as a suitable generalization of such a concept, for which we maintain the same symbol used in the literature. However, in the case of a kernel induced by a norm $\norm{\cdot}_K$ on $\R^n$, $ZB_k$ reduces to $ZK$, the moment body of $K$.

\subsection{Proof of Corollary \ref{gammawholespace}}
\label{proofgammawholespace}
    By the Blaschke-Petkantschin formula and the result for the $1$-dimensional case (Proposition \ref{1dliminf}):
    \begin{equation*}
        \begin{split}
            \liminf_{i\to\infty}(1-s_i)P_{k_{s_i}}(E_i,\R^n) & = \liminf_{i\to\infty}\int_{E\cap l \neq \emptyset}\frac{(1-s_i)P_{k_{s_i}}(E_i\cap l,\R)}{\abs{u(l)}_{k_{s_i}}^{n+s_i}}dL\\
            & \geq \int_{E\cap l\neq \emptyset}\frac{\H^0(\partial^*E\cap l)}{\abs{u(l)}_k^{n+1}}dL\\
            & =\pzbk(E,\R^n).\\
        \end{split}
    \end{equation*}

\section{Gamma-convergence on bounded Lipschitz domains}

This section is dedicated to the analysys of the $\Gamma(L^1_{loc})$-convergence properties of the anisotropic nonlocal $k_s$-perimeters $(1-s)\pks(\cdot,\O)$ as $s\to1^-$ where $
\O\sub\R^n$ is a bounded domain with Lipschitz boundary. The main goal of this section is prove Theorem \ref{gammaonomega}, namely:
\begin{itemize}
    \item Whenever $s_i\to 1^-$ and $E_i,E\sub\R^n$ are measurable sets such that $\chi_{E_i}\to\chi_E$ in $L^1_{loc}(\R^n)$:
    \begin{equation}
        \label{liminf}
        \liminf_{i\to\infty}(1-s_i)\pksi(E_i,\O)\geq P_{ZB_k}(E,\O)
    \end{equation}
    \item For any measurable set $E$ and sequence $s_i\to 1^-$ there exists a sequence of measurable sets $E_i$ such that $\chi_{E_i}\to\chi_E$ in $L^1_{loc}(\R^n)$ and:
    \begin{equation}
        \label{limsup}
        \limsup_{i\to\infty}(1-s_i)\pksi(E_i,\O)\leq P_{ZB_k}(E,\O),
    \end{equation}
\end{itemize}
In section \ref{equicoerc} we state and prove an equicoercivity theorem for the functionals $(1-s)\pks(\cdot,\O)$. This result is proved in the isotropic case in \cite[Theorem 1]{Ambrosio}; for completeness, we reproduce here the proof in our anisotropic setting. Section \ref{isotropiclemmassect} is dedicated to the proof of some useful lemmas regarding the isotropic case. In sections \ref{proofgammaliminfbounded} and \ref{proofgammalimsupbounded} we prove respectively the $\Gamma$-liminf inequality \ref{gammaliminf} and the $\Gamma$-limsup inequality \ref{gammalimsup}. In section \ref{sectiondim1} we provide a simpler proof of the $\Gamma$-liminf inequality in dimension $1$ (c.f. \ref{1dliminf}).

\subsection{Equi-coercivity}
\label{equicoercivity}
This section is dedicated to showing that the equicoercivity result proved in \cite[Theorem 1]{Ambrosio} also works in our anisotropic setting.
\begin{theorem}
    \label{equicoerc}
    Let $s_i\to1^-$ and assume that $E_i\sub\R^n$ are Lebesgue measurable sets such that, for every $\O'\Subset \O$, $\O'$ open, it holds that:
    \begin{equation*}
        \sup_i(1-s_i)\pksi(E_i,\O')<+\infty.
    \end{equation*}
    Then $(E_i)_i$ is relatively compact in $L^1_{loc}(\O)$and any limit point has locally finite perimeter in $\O$.
\end{theorem}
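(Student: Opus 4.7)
The plan is to reduce the anisotropic equicoercivity statement to its isotropic counterpart by exploiting the two-sided bound \ref{h2}. From the lower estimate $k_{s_i}(z)\geq c^{-1}|z|^{-n-s_i}$ one reads off at once
\begin{equation*}
    \pksi(E,\O')\;\geq\;\frac{1}{c}\,P_{s_i}(E,\O')
\end{equation*}
for every measurable $E\sub\R^n$ and every open $\O'\Subset\O$, where $P_{s_i}$ denotes the standard isotropic nonlocal $s_i$-fractional perimeter. Hence the standing hypothesis $\sup_i(1-s_i)\pksi(E_i,\O')<+\infty$ upgrades to the uniform isotropic bound $\sup_i(1-s_i)P_{s_i}(E_i,\O')<+\infty$, which places $(E_i)_i$ within the exact scope of \cite[Theorem 1]{Ambrosio}. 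Structurally, the theorem is then an immediate corollary of its isotropic version.

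To keep the presentation self-contained (as announced), I would reproduce the Ambrosio--De Philippis--Martinazzi argument. The underlying ingredient is a Bourgain--Brezis--Mironescu-type compactness theorem: any sequence $(u_i)\sub L^1(\O')$ uniformly bounded in $L^1(\O')$ and satisfying $\sup_i(1-s_i)[u_i]_{W^{s_i,1}(\O')}<+\infty$ admits a subsequence converging in $L^1(\O')$ to a function in $BV(\O')$. Applied to $u_i=\chi_{E_i}$, which is automatically $L^1(\O')$-bounded with $\|\chi_{E_i}\|_{L^1(\O')}\leq\L^n(\O')$ and whose rescaled Gagliardo seminorm is controlled (up to a factor of $2$) by $(1-s_i)P_{s_i}^1(E_i,\O'')$ for any $\O''$ slightly larger than $\O'$, this yields a subsequence converging in $L^1(\O')$ to $\chi_E$ for some set $E$ of finite perimeter in $\O'$.

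A standard diagonal extraction along a compact exhaustion $\O'_j\nearrow\O$ then produces a measurable set $E\sub\R^n$ with $\chi_{E_i}\to\chi_E$ in $L^1_{loc}(\O)$ and with $E\cap\O'_j$ of finite perimeter for each $j$, i.e.\ $E$ of locally finite perimeter in $\O$. The only genuinely anisotropic input is the comparison via \ref{h2}; everything else is a verbatim transcription of the isotropic proof, and I foresee no real obstacle. The point that requires the most attention is the compatibility of the $L^1(\O'_j)$-limits across the exhaustion, which follows from the routine uniqueness of the limit on each smaller subdomain.
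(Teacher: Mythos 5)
Your reduction is correct and matches the paper's proof in substance: the paper likewise invokes hypothesis \ref{h2} to pass from the anisotropic kernel to the isotropic one, writing $\F_{s_i}(\chi_{E_i},\O')\leq 2c\,\pksi^1(E_i,\O')$ inside the reproduced Riesz--Fr\'echet--Kolmogorov argument, which is exactly the comparison you extract up front as $P_{s_i}(E,\O')\leq c\,\pksi(E,\O')$. The only cosmetic difference is that you state the reduction to \cite[Theorem 1]{Ambrosio} explicitly at the outset rather than folding \ref{h2} into the translation estimate; your remark about needing a slightly larger $\O''$ is unnecessary since $\F_{s_i}(\chi_E,\O')=2P^1_{s_i}(E,\O')$ already holds on the same domain.
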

The proof is a direct application of the Riesz-Fréchet-Kolmogorov compactness theorem, and relies on some  preliminary estimates (\cite[Proposition 4, Proposition 5, Proposition 6]{Ambrosio}) that we recall here without proof.
\begin{proposition}[{Hardy's Inequality, c.f. \cite[Proposition 4]{Ambrosio}}]
    Let $g:\R\to\R_+$ be a Borel measurable function. Then, for every $s>0$, $r>0$:
    \begin{equation}
        \label{hardy}
        \int_0^r\frac{1}{\xi^{n+s+1}}\int_0^\xi g(t) dt d\xi \leq\frac{1}{n+s}\int_0^r\frac{g(t)}{t^{n+s}}dt.
    \end{equation}    
\end{proposition}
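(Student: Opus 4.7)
The plan is to prove this by a straightforward Fubini/Tonelli swap of the order of integration. Since $g \geq 0$, Tonelli's theorem applies to the double integral on the left-hand side with no integrability issues (the integral is a well-defined element of $[0,+\infty]$).

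First I would rewrite the region of integration on the left-hand side. The double integral is taken over the set $\{(t,\xi) : 0 < t < \xi < r\}$, so after swapping:
\begin{equation*}
    \int_0^r\frac{1}{\xi^{n+s+1}}\int_0^\xi g(t)\,dt\,d\xi = \int_0^r g(t) \left(\int_t^r \frac{1}{\xi^{n+s+1}}\,d\xi\right) dt.
\end{equation*}
Then I would compute the inner integral explicitly as an antiderivative of a power:
\begin{equation*}
    \int_t^r \xi^{-(n+s+1)}\,d\xi = \frac{1}{n+s}\left(\frac{1}{t^{n+s}} - \frac{1}{r^{n+s}}\right) \leq \frac{1}{(n+s)\,t^{n+s}}.
\end{equation*}
Substituting this bound back and pulling the constant $1/(n+s)$ outside yields the claimed inequality.

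I do not expect any real obstacle here: since $g$ is nonnegative and Borel measurable, Tonelli's theorem applies directly, and the rest is an elementary computation. The only minor point to be careful about is that the inner integral is finite for every $t > 0$ (the singularity is at $\xi = 0$, which is excluded from the domain once we fix $t > 0$), and that dropping the nonpositive term $-1/r^{n+s}$ only strengthens the inequality in the desired direction.
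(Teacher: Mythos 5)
Your proof is correct. The paper itself recalls this proposition without proof, simply citing \cite[Proposition 4]{Ambrosio}, and the Tonelli swap over the triangular region $\{0<t<\xi<r\}$ followed by an explicit evaluation of $\int_t^r \xi^{-(n+s+1)}\,d\xi$ and dropping the nonpositive $-r^{-(n+s)}$ term is precisely the standard (and the cited source's) argument; there is nothing to add or repair.
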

\begin{proposition}[{\cite[Proposition 5]{Ambrosio}}]
    Let $u \in L^1(\O)$, $\O'\Subset \O$ open, $h\in\R^n$ such that $\abs{h}<\text{dist}(\O',\O)/2$. Then, for every $z\in(0,\abs{h}]$ it holds that:
    \begin{equation}
        \label{transest}
        \norm{\tau_h u-u}_{L^1(\O')}\leq C(n)\frac{\abs{h}}{z^{n+1}}\int_{B_z}\norm{\tau_\xi u -u}_{L^1(J_\abs{h}(\O'))}d\xi,
    \end{equation}
    where $J_\abs{h}(\O')\coloneq\{x\in\O:\mathrm{dist}(x,\O')<\abs{h}\}$.
\end{proposition}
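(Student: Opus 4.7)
The plan is to pass through a mollification argument: exploit the smoothness of the mollified function to gain one derivative (at the price of a factor $z^{-1}$) and then use the fundamental theorem of calculus to absorb the factor $\abs{h}$. First I would fix an even, nonnegative mollifier $\rho\in C_c^\infty(B_1)$ with $\int\rho=1$, and set $\rho_z(x)=z^{-n}\rho(x/z)$, $u_z=u*\rho_z$. The starting point is the telescoping decomposition
\begin{equation*}
    \tau_h u-u=\tau_h(u-u_z)+(\tau_h u_z-u_z)+(u_z-u),
\end{equation*}
which reduces the problem to three separate $L^1$-bounds.

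For the two mollification error terms, writing $u_z(x)-u(x)=\int_{B_z}[u(x-\xi)-u(x)]\rho_z(\xi)\,d\xi$ (which uses $\int\rho_z=1$) together with $\norm{\rho_z}_\infty\leq C(n)z^{-n}$ yields
\begin{equation*}
    \norm{u_z-u}_{L^1(A)}\leq \frac{C(n)}{z^n}\int_{B_z}\norm{\tau_{-\xi}u-u}_{L^1(A)}\,d\xi
\end{equation*}
for any measurable $A\sub\O$. I would apply this with $A=\O'$ and $A=\O'+h\sub J_{\abs{h}}(\O')$ (the inclusion being a direct consequence of the hypothesis on $\abs{h}$), and a change of variable $\xi\leftrightarrow-\xi$ using the symmetry of $B_z$ puts the integrand in the required form $\norm{\tau_\xi u-u}_{L^1}$. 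Since $z\leq\abs{h}$, the resulting prefactor $z^{-n}$ is dominated by $\abs{h}z^{-n-1}$, matching the target constant in \eqref{transest}.

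For the smooth middle term, the fundamental theorem of calculus on segments gives
\begin{equation*}
    \norm{\tau_h u_z-u_z}_{L^1(\O')}\leq \abs{h}\,\norm{\nabla u_z}_{L^1(J_{\abs{h}}(\O'))}.
\end{equation*}
The key observation is that $\int_{\R^n}\nabla\rho_z(\xi)\,d\xi=0$, so from $\nabla u_z(x)=\int u(x-\xi)\nabla\rho_z(\xi)\,d\xi$ one may subtract $u(x)\cdot 0$ to obtain
\begin{equation*}
    \nabla u_z(x)=\int_{B_z}[u(x-\xi)-u(x)]\nabla\rho_z(\xi)\,d\xi;
\end{equation*}
combined with $\norm{\nabla\rho_z}_\infty\leq C(n)z^{-n-1}$ this produces exactly the factor $z^{-n-1}$ needed, and summing the three contributions delivers \eqref{transest}. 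The hard part, conceptually, is the scale matching: the mollifier width must be chosen equal to $z$ so that the differentiation cost $z^{-1}$ combines with the chord length $\abs{h}$ to give the sharp prefactor $\abs{h}/z^{n+1}$, while the mean-zero property of $\nabla\rho_z$ is what upgrades a pointwise $L^\infty$-bound on $\nabla\rho_z$ into a bound involving only integrated translations of $u$ itself.
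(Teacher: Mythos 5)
Your proof is correct. The paper itself states this proposition \emph{without proof}, deferring to \cite{Ambrosio}, so there is no in-paper argument to compare against; your mollification route is a standard and valid way to obtain the estimate. The three-term telescoping through $u_z=u*\rho_z$ is sound, the mean-zero identity $\int\nabla\rho_z=0$ correctly upgrades the pointwise bound $\norm{\nabla\rho_z}_\infty\leq C(n)z^{-n-1}$ into one involving only integrated translations of $u$, and the final absorption $z^{-n}\leq\abs{h}\,z^{-n-1}$ uses $z\leq\abs{h}$ exactly as needed. One point worth making explicit in a write-up: the convolution $u_z$ is well defined on $J_{\abs{h}}(\O')$ (and on $\O'+th$ for $t\in[0,1]$) because for $x\in J_{\abs{h}}(\O')$ and $\abs{\xi}<z\leq\abs{h}$ one has $\mathrm{dist}(x-\xi,\O')<\abs{h}+z\leq2\abs{h}<\mathrm{dist}(\O',\partial\O)$, hence $x-\xi\in\O$; this is precisely where the standing hypothesis $\abs{h}<\mathrm{dist}(\O',\partial\O)/2$ enters (the statement as printed has a small typo, writing $\mathrm{dist}(\O',\O)$ for $\mathrm{dist}(\O',\partial\O)$, as is clear from the companion Proposition~6).
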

Using these two results one can prove the following proposition, which is used in the proof of the equi-coercivity theorem.
\begin{proposition}[{\cite[Proposition 6]{Ambrosio}}]
    Let $u\in L^1(\O)$, $\O'\Subset\O$ and $s\in(0,1)$. Then, for every $h\in\R^n$ such that $0<\abs{h}<\text{dist}(\O',\partial \O)/2$, it holds that:
    \begin{equation}
        \label{transest2}
        \frac{\norm{\tau_h u-u}_{L^1(\O')}}{\abs{h}^s}\leq C(n)(1-s)\int_{B(0,\abs{h})}\frac{\norm{\tau_\xi u -u}_{L^1(J_\abs{h}(\O'))}}{\abs{\xi}^{n+s}}d\xi.
    \end{equation}
\end{proposition}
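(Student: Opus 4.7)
The plan is to combine the two preceding propositions: Hardy's inequality \eqref{hardy} and the translation estimate \eqref{transest}. The idea is that \eqref{transest} provides a uniform \emph{lower} bound on the averaged oscillation $\int_{B_z}\norm{\tau_\xi u-u}_{L^1(J_{\abs{h}}(\O'))}d\xi$ in terms of $\norm{\tau_h u-u}_{L^1(\O')}$, while \eqref{hardy} converts a nested integral of such averages into a radial integral weighted by $t^{-(n+s)}$, which is precisely the weight appearing on the right-hand side of \eqref{transest2}.

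More concretely, I would first pass to polar coordinates on both sides. Setting
\begin{equation*}
    g(t):=\int_{\partial B_t}\norm{\tau_\xi u-u}_{L^1(J_{\abs{h}}(\O'))}d\H^{n-1}(\xi),
\end{equation*}
one has $\int_{B_z}\norm{\tau_\xi u-u}_{L^1(J_{\abs{h}}(\O'))}d\xi=\int_0^z g(t)\,dt$, and the right-hand side of \eqref{transest2} equals $C(n)(1-s)\int_0^{\abs{h}}g(t)t^{-(n+s)}dt$. Applying \eqref{transest} with an arbitrary $z=\xi\in(0,\abs{h}]$ rearranges to
\begin{equation*}
    \int_0^\xi g(t)\,dt\geq \frac{\xi^{n+1}}{C(n)\abs{h}}\,\norm{\tau_h u-u}_{L^1(\O')}.
\end{equation*}

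Next, I would apply Hardy's inequality \eqref{hardy} with $r=\abs{h}$ and insert the lower bound above into the left-hand side of \eqref{hardy}:
\begin{equation*}
    \int_0^{\abs{h}}\frac{1}{\xi^{n+s+1}}\int_0^\xi g(t)\,dt\,d\xi\geq \frac{\norm{\tau_h u-u}_{L^1(\O')}}{C(n)\abs{h}}\int_0^{\abs{h}}\xi^{-s}\,d\xi=\frac{\norm{\tau_h u-u}_{L^1(\O')}}{C(n)(1-s)\abs{h}^{s}}.
\end{equation*}
On the other hand, \eqref{hardy} bounds the same left-hand side from above by $\frac{1}{n+s}\int_0^{\abs{h}}g(t)t^{-(n+s)}dt$. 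Chaining these two estimates and returning to Cartesian coordinates yields \eqref{transest2}, with the constant $C(n)$ absorbing the harmless factor $(n+s)^{-1}<1$.

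I do not foresee a real obstacle here: once the polar decomposition is set up, the argument reduces to a two-line chain combining \eqref{transest} and \eqref{hardy}. The only point worth flagging is the origin of the scaling factor $(1-s)$, which appears as $\int_0^{\abs{h}}\xi^{-s}d\xi=\abs{h}^{1-s}/(1-s)$; this is exactly the feature of the estimate that makes it non-degenerate as $s\to 1^-$ and that will make the rescaled functionals $(1-s)\pks(\cdot,\O)$ admit a meaningful limit in the subsequent sections.
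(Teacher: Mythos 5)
Your proof is correct and follows exactly the intended route: you combine Propositions \eqref{hardy} and \eqref{transest} by rewriting the translated-oscillation integral in polar coordinates, use \eqref{transest} to produce the lower bound $\int_0^\xi g(t)\,dt\geq \xi^{n+1}\norm{\tau_h u-u}_{L^1(\O')}/(C(n)\abs{h})$, insert it into the left-hand side of Hardy's inequality with $r=\abs{h}$, and read off the factor $(1-s)$ from $\int_0^{\abs{h}}\xi^{-s}d\xi=\abs{h}^{1-s}/(1-s)$. The paper states this proposition without proof (deferring to Ambrosio, De Philippis, Martinazzi), and your argument is precisely the one those preceding two propositions are set up for.
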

For readability, from now on for $u\in\L^1(\O)$ and $s\in(0;1)$ we will denote the Gagliardo seminorm of $u$ in $W^{s,1}(\O)$ by:
\begin{equation}
    \F_s(u,\O):=\int_\O\int_\O\frac{\abs{u(x)-u(y)}}{\abs{x-y}^{n+s}}dxdy.
\end{equation}
\begin{proof}[Proof of Theorem \ref{equicoerc}]
    Let us call for simplicity $u_i=\chi_{E_i}$ and $\mathcal{G}=\{u_i\}_i$. We need to prove that for every compact $K\Subset \O$ the family $\mathcal{G}|_K=\{u_i|_K\}_i$ is relatively compact in $L^1(K)$. In order to do so, let $\O'$ be an open set such that $K\Subset \O' \Subset \O$ and call $\delta=\text{dist}(K,\partial \O')$. Fix $\epsilon>0$ and choose $0<\eta<\delta/2$ such that for any $h\in\R^n$ with $\abs{h}<\eta$:
    \begin{equation*}
        \L^n(K\Delta (K+h))<\epsilon,
    \end{equation*}
    where $K+h=\left\{x+h:x\in K\right\}$. A simple computation shows that if $\abs{h}<\eta$:
    \begin{equation*}
        \norm{\tau_h (u_i|_K)-u_i|_K}_{L^1(\R^n)} \leq \norm{\tau_h u_i-u_i}_{L^1(K)}+\epsilon.
    \end{equation*}
    Applying now \eqref{transest2} we get that:
    \begin{equation*}
        \norm{\tau_h u_i-u_i}_{L^1(K)}\leq C(n)\abs{h}^{s_i}(1-s_i)\F_{s_i}(u_i,\O').
    \end{equation*}
    Using hypotesis \eqref{h2}, since $u_i=\chi_{E_i}$ we can estimate:
    \begin{equation*}
        \F_{s_i}(u_i,\O')\leq 2c\, \pksi^1(E_i,\O').
    \end{equation*}
    Thus:
    \begin{equation}
        \label{tbound}
        \norm{\tau_h u_i-u_i}_{L^1(K)}\leq C(n)\abs{h}^{s_i}(1-s_i)\pksi^1(E_i,\O'),
    \end{equation}
    and:
    \begin{equation}
        \norm{\tau_h (u_i|_K)-u_i|_K}_{L^1(\R^n)}\leq C(n)\abs{h}^{s_i}(1-s_i)\pksi^1(E_i,\O')+\epsilon
    \end{equation}
    By hypothesis, $\sup_i(1-s_i)\pksi^1(E_i,\O')<+\infty$, so we have that:
    \begin{equation*}
        \lim_{\abs{h}\to 0}\sup_i \norm{\tau_h (u_i|_K)-u_i|_K}_{L^1(\R^n)} \leq\epsilon.
    \end{equation*}
    Since $\epsilon>0$ is arbitrary we conclude that:
    \begin{equation*}
        \lim_{\abs{h}\to 0}\sup_i \norm{\tau_h (u_i|_K)-u_i|_K}_{L^1(\R^n)}=0.
    \end{equation*}
    By Riesz-Fréchet-Kolmogorov compactness criterion (\cite[Theorem 4.26]{Brezis}), $\mathcal{G|_K}$ is relatively compact in $L^1(K)$.\\
    Let now $E$ be a limit point in $L^1_{loc}(\R^n)$ of a subsequence of sets $E_{i_j}$. By \eqref{tbound}:
    \begin{equation*}
         \norm{\tau_h u_{i_j}-u_{i_j}}_{L^1(K)}\leq C(n,K)\abs{h}^{s_{i_j}}.
    \end{equation*}
    Passing to the limit with respect to $j$ we obtain for $\abs{h}$ small enough:
    \begin{equation*}
        \norm{\tau_h\chi_E-\chi_E}_{L^1(K)}\leq C(n,K)\abs{h},
    \end{equation*}
    which implies that $\chi_E\in BV_{loc}(\O)$ (\cite[Proposition 9.3]{Brezis}).    
\end{proof}

\subsection{Lemmas on the isotropic case}
\label{isotropiclemmassect}

We anticipate here two lemmas on the isotropic case which we will make great use of in the subsequent proof of the $\Gamma$-convergence. These lemmas are, in fact, just particular cases of \cite[lemma 8]{Ambrosio}. Since in such cases the proofs come out slightly simplified, we provide them here.\\
For any $\d\geq0$ and $\O$ as above let:
\begin{equation*}
    \O_\d\coloneq\{x\in\O:\mathrm{dist}(x,\O^c)<\d\},
\end{equation*}
and:
\begin{equation*}
    \O_\d^c\coloneq\{x\in\O^c:\mathrm{dist}(x,\O)<\d\}.
\end{equation*}
Note that if $\d=0$ then both these sets are empty.\\
Moreover, for $\d_1,\d_2\geq 0$ define:
\begin{equation*}
 \Odd\coloneq \O_{\d_1}\cup\O_{\d_2}^c.
\end{equation*}
Let $Q$ denote the standard open unit cube in $\R^n$:
\begin{equation}
    Q=\left(-\frac{1}{2},\frac{1}{2}\right)^n.
\end{equation}
Coherently with the notation proposed above, for any $\d\geq0$ we will call $Q_\delta\coloneq\{x\in Q:\mathrm{dist}(x,Q^c)<\delta\}$.\\
\begin{lemma}
    \label{lemma81}
   There exists a positive constant $C(n)<\infty$ such that for any $\d_1,\d_2\in \Q$ with $0\leq\d_1,\d_2<\frac{1}{2}$:
    \begin{equation}
        \limsup_{s\to1^-}(1-s)P^1_s(H,\Qdd)\leq C(n)P(H,\Qdd).
    \end{equation}
    
\end{lemma}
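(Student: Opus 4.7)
The plan is to reduce $P_s^1(H, \Qdd)$ to a one-dimensional weighted integral by integrating out the $n-1$ coordinates parallel to $\partial H = \{x_n = 0\}$, exploiting the translation invariance of the Euclidean kernel in those coordinates.

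First, enlarging the outer integration from $H^c \cap \Qdd$ to all of $H^c$, a direct cylindrical-coordinates computation gives, for $x = (x', -a) \in H$ with $a > 0$,
\begin{equation*}
\int_{H^c} \frac{dy}{|x-y|^{n+s}} = \frac{C_{n,s}}{s}\, a^{-s}, \qquad C_{n,s} := \int_{\R^{n-1}} \frac{dv}{(1+|v|^2)^{(n+s)/2}},
\end{equation*}
with $C_{n,s}$ uniformly bounded for $s \in (0,1]$ by a constant depending only on $n$. Applying Fubini in $x'$ then yields
\begin{equation*}
P_s^1(H, \Qdd) \leq \frac{C_{n,s}}{s}\int_0^\infty a^{-s}\, \o(a)\, da, \qquad \o(a) := \H^{n-1}\bigl(\{x' \in \R^{n-1} : (x', -a) \in \Qdd\}\bigr).
\end{equation*}

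Next I would analyse $\o(a)$ from the decomposition $\Qdd = Q_{\d_1} \cup Q_{\d_2}^c$. The key observation is that for $a \in [0, 1/2 - \d_1]$ one has $1/2 - a \geq \d_1$, so the face $\{x_n = -1/2\}$ of $\partial Q$ never realizes the distance from $(x', -a)$ to $Q^c$ in the definition of $Q_{\d_1}$; similarly, membership of $(x',-a)$ in $Q_{\d_2}^c$ reduces to a condition on $x'$ alone, independent of $a$. Hence the horizontal slice of $\Qdd$ at height $-a$ coincides with the slice at $0$, which gives $\o(a) = \o(0) = P(H, \Qdd) =: p$ on this whole interval. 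On $(1/2 - \d_1, 1/2 + \d_2)$ the slice enlarges but always satisfies $\o(a) \leq (1 + 2\d_2)^{n-1} \leq 2^{n-1}$, and $\o(a) = 0$ for $a \geq 1/2 + \d_2$.

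Finally, splitting the integral at $a = 1/2 - \d_1$ produces
\begin{equation*}
(1-s)\int_0^\infty a^{-s}\, \o(a)\, da \leq p\,(1/2-\d_1)^{1-s} + 2^{n-1}\bigl[(1/2+\d_2)^{1-s} - (1/2-\d_1)^{1-s}\bigr].
\end{equation*}
As $s \to 1^-$ both powers tend to $1$, so the bracket vanishes while the first term converges to $p$; combined with the bound from the first step, this gives $\limsup_{s\to1^-}(1-s) P_s^1(H, \Qdd) \leq C(n)\, P(H, \Qdd)$ with $C(n) = \sup_{s \in [1/2, 1]} C_{n,s}/s$. The step requiring the most care is the identification $\o(a) = p$ on $[0, 1/2 - \d_1]$, which uses the explicit form of the Euclidean distance to $Q$ and $Q^c$ when $-a \in [-(1/2 - \d_1), 0]$; once it is established, the contribution from the boundary band $a \in (1/2 - \d_1, 1/2 + \d_2)$ is automatically negligible in the limit because the bracket above collapses to zero.
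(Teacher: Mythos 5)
Your argument is correct and takes a genuinely different route from the paper's. The paper covers $\partial H\cap\Qdd$ by finitely many small axis-aligned cubes, estimates the contribution from points of $H\cap\Qdd$ lying farther than $\lambda$ from $\partial H$ directly (it vanishes after multiplying by $1-s$), and reduces the near-boundary contribution to a multiple of $P^1_s(H,Q)$ through a scaling and covering argument; the constant is then defined implicitly as $C(n)=\limsup_{s\to1^-}(1-s)P^1_s(H,Q)$, and $N_\lambda(2\lambda)^{n-1}=P(H,\Qdd)$ closes the computation. You instead exploit the translation invariance of the Euclidean kernel in the $n-1$ directions parallel to $\partial H$ and the axis-alignment of $Q$ with $H$ to integrate out those coordinates in closed form, collapsing the estimate to a one-variable weighted integral $\frac{C_{n,s}}{s}(1-s)\int_0^\infty a^{-s}\omega(a)\,da$ and yielding the explicit constant $\int_{\R^{n-1}}(1+|v|^2)^{-(n+1)/2}\,dv=\o_{n-1}$ upon letting $s\to1^-$ in the final display (which, incidentally, is the value the paper's $C(n)$ turns out to equal). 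The trade-off is that your reduction is tied to the specific geometry of this statement — Euclidean kernel, axis-aligned cube, half-space through the origin — whereas the paper's covering strategy is more flexible and is reused essentially verbatim in the proof of the $\Gamma$-limsup inequality for polyhedra. The step deserving the most care is indeed the slice identification $\omega(a)=P(H,\Qdd)$ for $a\in[0,1/2-\d_1]$; your justification is sound: for such $a$ the bottom face of $Q$ cannot trigger the condition $\mathrm{dist}(\cdot,Q^c)<\d_1$, and for $a<1/2$ membership of $(x',-a)$ in $Q_{\d_2}^c$ reduces to $x'\notin(-1/2,1/2)^{n-1}$ together with $\mathrm{dist}\big(x',(-1/2,1/2)^{n-1}\big)<\d_2$, which is manifestly independent of $a$.
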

\begin{proof}
    If both $\d_1$ and $\d_2$ are zero the statement is trivial, so we can assume otherwise. Let us fix $\l>0$ such that we may find a finite number of open cubes $Q_i^\l\sub \Qdd$, $i=1,...,N_\l$ with the following properties:
    \begin{enumerate}[start=1, label=\roman*.]
        \item Every $Q_i^\l$ has side length $2\l$, its barycenter lies on $H\cap\Qdd$ and its sides are either orthogonal or parallel to $H$.
        \item Setting
        \begin{equation*}
            P_\l\coloneq \bigcup_{i=1}^{N_\l}Q_i^\l,
        \end{equation*}
        we have that $\H^{n-1}((H\cap\Qdd)\setminus P_\l)=0$.
    \end{enumerate}
    For $x\in H\cap\Qdd$ set:
    \begin{equation*}
        I_s(x)\coloneq \int_{H^c\cap\Qdd}\frac{1}{\abs{x-y}^{n+s}}dy.
    \end{equation*}
    Call $H^-_\l\coloneq\{x\in\R^n:-\l<x_n\leq 0\}$.\\
    \textit{Case 1.} If $x\in (H\cap\Qdd)\setminus H^-_\l$ we have that:
    \begin{equation*}
        I_s(x)\leq\int_{B(x,\l)^c}\frac{1}{\abs{x-y}^{n+s}}dy=\frac{n\o_n}{s\l^s}.
    \end{equation*}
    Hence:
    \begin{equation*}
        \int_{(H\cap\Qdd)\setminus H^-_\l} I_s(x)dx\leq\frac{n\o_n\L^n(Q)}{s\l^s},
    \end{equation*}
    which implies:
    \begin{equation*}
        \limsup_{s\to1^-}(1-s)\int_{(H\cap\Qdd)\setminus H^-_\l} I_s(x)dx=0.
    \end{equation*}
    \textit{Case 2.} Take $x\in H^-_\l\cap\Qdd$. By construction up to a set of $\L^n$ measure zero $H^-_\l\cap\Qdd=P_\l$.\\
    For every $\e>0$ and $i=1,...,N_\l$ call $\tqil$ the dilation of the cube $Q_i^\l$ by a factor $(1+\e)$.\\
    Let us write:
    \begin{equation*}
        \begin{split}
            I_s(x)&=\int_{(H^c\cap\Qdd)\cap B(x,\e\l)^c}\frac{1}{\abs{x-y}^{n+s}}dy+\int_{(H^c\cap\Qdd)\cap B(x,\e\l)}\frac{1}{\abs{x-y}^{n+s}}dy\\
            &=I^1_s(x)+I^2_s(x).
        \end{split}
    \end{equation*}
    Arguing as above we get that:
    \begin{equation*}
        \int_{H^-_\l\cap\Qdd}I^1_s(x)dx\leq\frac{n\o_n\L^n(Q)}{s\e^s\l^s}.
    \end{equation*}
    As for $I^2_s(x)$, observe that every $\tqil$ has side length $2\l(1+\e)$. So, if $x\in Q_i^\l$ and $y\in B(x,\l\e)$, then $y\in\tqil$. This leads to the following estimate:
    \begin{equation*}
        \begin{split}
            \int_{H^-_\l\cap\Qdd}I^2_s(x)dx&\leq\sum_{i=1}^{N_\l}\int_{H\cap\tqil}\int_{H^c\cap\tqil}\frac{1}{\abs{x-y}^{n+s}}dxdy\\
            &=N_\l P_s^1(H,\tqil)=N_\l 2^{n-s}(\l+\l\e)^{n-s}P_s^1(H,Q).
        \end{split}
    \end{equation*}
    Setting now:
    \begin{equation*}
        C(n)\coloneq\limsup_{s\to1^-}(1-s)P_s^1(H,Q),
    \end{equation*}
    by the arbitrariness of $\e>0$ we get that:
    \begin{equation}
        \limsup_{s\to1^-}(1-s)P_s^1(H,\Qdd)\leq C(n)N_\l(2\l)^{n-1}=C(n)P(H,\Qdd).
    \end{equation}
\end{proof}
\begin{lemma}
    \label{lemma82}
    It holds that:
    \begin{equation}
        \lim_{s\to1^-}(1-s)\int_{H\cap Q}\int_{H^c\cap Q^c}\frac{1}{\abs{x-y}^{n+s}}dxdy=0.
    \end{equation}
\end{lemma}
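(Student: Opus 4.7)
The plan is to apply the Blaschke--Petkantschin formula to reduce the $n$-dimensional integral to a fibered integral over the affine Grassmannian $A(n,1)$, and then to conclude by dominated convergence using the uniform 1D bound \eqref{a2} from Proposition \ref{lemma1ludw}. Taking $g(x,y) = \chi_{H\cap Q}(x)\chi_{H^c\cap Q^c}(y)\abs{x-y}^{-(n+s)}$ in the Blaschke--Petkantschin lemma and cancelling the factor $\abs{x-y}^{n-1}$ in the resulting integrand, the integral in the lemma rewrites as
\begin{equation*}
\int_{A(n,1)} I_l(s)\,dL, \qquad I_l(s) := \int_{A_l}\int_{B_l} \frac{1}{\abs{t-t'}^{1+s}}\,d\H^1(t)\,d\H^1(t'),
\end{equation*}
with $A_l := l \cap (H \cap Q)$ and $B_l := l \cap (H^c \cap Q^c)$. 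Since $H \cap Q$ is convex, $A_l$ is always an interval in $l$, so $\H^0(\partial^*A_l) \leq 2$ and $\mathrm{diam}(A_l) \leq \sqrt{n}$; the inclusion $B_l \sub l \setminus A_l$ then yields $I_l(s) \leq P_s(A_l, l)$, and \eqref{a2} provides the uniform pointwise bound $(1-s)I_l(s) \leq 16\sqrt{n}$ valid for every $l$ and every $s \in [1/2,1)$.

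The geometric core of the argument, which I expect to be the main obstacle, is the pointwise claim that $(1-s)I_l(s) \to 0$ for $L$-almost every $l$. The set $\partial H \cap \partial Q$ is $(n-2)$-dimensional in $\R^n$ (the ``equator'' of the cube), so the collection of lines in $A(n,1)$ meeting it has $L$-measure zero. For any other $l$ with $A_l \neq \emptyset$, each endpoint of the interval $A_l$ belongs either to $\partial H \cap Q^\circ$ or to $H^\circ \cap \partial Q$; moving along $l$ past such an endpoint one locally enters either $l \cap H^c \cap Q$ or $l \cap H \cap Q^c$, and neither of these sets is contained in $B_l = l \cap H^c \cap Q^c$. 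Hence there exists $\delta(l) > 0$ with $\mathrm{dist}_l(A_l, B_l) \geq \delta(l)$, and the direct estimate
\begin{equation*}
I_l(s) \leq \L^1(A_l) \cdot 2 \int_{\delta(l)}^\infty \frac{dr}{r^{1+s}} \leq \frac{2\sqrt{n}}{s}\,\delta(l)^{-s}
\end{equation*}
shows that $I_l(s)$ stays bounded as $s \to 1^-$, whence $(1-s)I_l(s) \to 0$.

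Since $H \cap Q$ is bounded, the set $\{l \in A(n,1) : A_l \neq \emptyset\}$ has finite $L$-measure; combined with the uniform bound $(1-s)I_l(s) \leq 16\sqrt{n}$ this yields an integrable dominator on the fibered integral, so the dominated convergence theorem gives
\begin{equation*}
\lim_{s\to1^-}(1-s)\int_{H\cap Q}\int_{H^c\cap Q^c}\frac{dxdy}{\abs{x-y}^{n+s}} = \int_{A(n,1)} \lim_{s\to1^-}(1-s)I_l(s)\,dL = 0,
\end{equation*}
as required. This approach makes transparent the intuition that the quantity being considered captures the ``trace'' contribution of $\partial H$ against $\partial Q$, which is negligible precisely because $\partial H \cap \partial Q$ has codimension $2$ in $\R^n$.
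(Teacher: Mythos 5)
Your proposal is correct, but it proceeds by a route genuinely different from the paper's. The paper fixes a boundary strip $Q_\delta$, splits the $x$-integration into $H\cap(Q\setminus Q_\delta)$ (where $I_s(x)$ is bounded elementarily since $x$ lies at distance at least $\delta$ from $Q^c$) and $H\cap Q_\delta$ (where, after a further near/far split in $y$, the near part is dominated by $P^1_s(H,Q_{\delta,\delta})$, whose rescaled $\limsup$ is controlled via Lemma \ref{lemma81}); letting $\delta\to0^+$ then finishes. You instead apply the Blaschke--Petkantschin formula to disintegrate the double integral along lines $l\in A(n,1)$, and you show that the rescaled fiber contributions $(1-s)I_l(s)$ vanish for $L$-a.e.\ $l$: the convexity of $H\cap Q$ makes $A_l$ a single interval, and for every $l$ avoiding the codimension-two set $\partial H\cap\partial Q$ (a null set of lines), each endpoint of $A_l$ has a one-sided neighborhood lying in $H^c\cap Q$ or $H\cap Q^c$, hence disjoint from $B_l$, giving a positive separation $\delta(l)>0$; a uniform majorant follows from \eqref{a2} through $I_l(s)\le P_s(A_l,\R)$ with $\H^0(\partial^*A_l)\le 2$ and $\mathrm{diam}(A_l)\le\sqrt{n}$, and $\{l:A_l\neq\emptyset\}$ has finite $L$-measure, so dominated convergence applies. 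Your argument is correct and rather transparent: it isolates the geometric reason the term vanishes (namely that $\partial H\cap\partial Q$ has codimension two) and is consistent with the integral-geometric machinery the paper already deploys in Section 3. It avoids invoking Lemma \ref{lemma81} here, though since that lemma is also used in Lemma \ref{gammaequal} the overall dependency graph of the paper does not shrink; the paper's route in turn has the small advantage of remaining entirely within the elementary domain-decomposition framework used throughout Section 4.
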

\begin{proof}
    Fix $\d\in\Q$, $\d>0$ and for $x\in H\cap Q$ let:
    \begin{equation}
        I_s(x)\coloneq\int_{H^c\cap Q^c}\frac{1}{\abs{x-y}^{n+s}}dy.
    \end{equation}
    First notice that with the same argument used in the previous proof we get that if $x\in H\cap (Q\setminus Q_\d)$:
    \begin{equation*}
        I_s(x)\leq\frac{n\o_n}{s\d^s}.
    \end{equation*}
    If instead $x\in H\cap Q_\d$ write:
    \begin{equation*}
        \begin{split}
             I_s(x)&=\int_{H^c\cap Q^c_\d}\frac{1}{\abs{x-y}^{n+s}}dy +\int_{H^c\cap(Q^c\setminus Q^c_\d)}\frac{1}{\abs{x-y}^{n+s}}dy\\
             &\leq \int_{H^c\cap Q^c_\d}\frac{1}{\abs{x-y}^{n+s}}dy +\frac{n\o_n}{s\d^s}.
        \end{split}
    \end{equation*}
    Thus we obtain:
    \begin{equation*}
        \int_{H\cap Q}\int_{H^c\cap Q^c}\frac{1}{\abs{x-y}^{n+s}}dxdy\leq \frac{2n\o_n\L^n(Q)}{s\d^s}+P_s^1(H,Q_{\d,\d}),
    \end{equation*}
    By lemma \ref{lemma81}:
    \begin{equation*}
        \limsup_{s\to1^-}(1-s) \int_{H\cap Q}\int_{H^c\cap Q^c}\frac{1}{\abs{x-y}^{n+s}}dxdy\leq C(n)P(H,Q_{\d,\d}).
    \end{equation*}
    Letting now $\d\to0^+$ we get the thesis.
\end{proof}

\subsection{$\Gamma$-liminf Inequality}
\label{proofgammaliminfbounded}

In order to prove the $\Gamma$-liminf inequality we will make use of the gluing construction proposed in \cite{Ambrosio}. Roughly speaking, given $\d_1>\d_2>0$ and two measurable sets $E_1,E_2\sub\R^n$, the gluing technique consists in finding a new measurable set $F\sub\R^n$ that agrees with $E_1$ in $\O\setminus \O_{\d1}$ and with $E_2$ in $\O_{\d_2}$ and such that the anisotropic $s$-fractional perimeter of $F$ in $\O$ is adequately controlled by $E_1$ and $E_2$. In particular, given for $s\in(0,1)$ measurable sets $E_s\to E$ as $s\to1^-$ in $L^1_{loc}(\R^n)$, this construction allows us to find measurable sets $F_s$ such that $F_s\to E$ as $s\to1^-$, every $F_s$ agrees with $E$ in $\O_\d$ for some $\d>0$ and $(1-s)\pks^1(F_s,\O)$ is asymptotically controlled by $(1-s)\pks^1(E_s,\O)$ as $s\to1^-$.\\
We start by introducing some necessary notation.
For $s\in(0,1)$, $k_s(\cdot)$ as before and a measurable function $u:\O\to\R$ set:
\begin{equation}
    \label{ksseminorm}
    \F_{k_s}(u,\O)\coloneq \int_{\O\times \O}k_s(x-y)\abs{u(x)-u(y)}dxdy.
\end{equation}
Note that since that $k_s(\cdot)$ verifies hypothesis \eqref{h2} there exists a constant $c\geq1$ such that:
\begin{equation}
    \frac{1}{c}\F_s(u,\O)\leq \F_{k_s}(u,\O)\leq c\,\F_s(u,\O).
\end{equation}
We state here the gluing result in the anisotropic setting (for the proof cf. \cite[Proposition 11]{Ambrosio})
\begin{proposition}
    \label{gluing}
    Let $s\in(0,1)$ and $E_1$,$E_2\sub\R^n$ be measurable sets such that $\pks^1(E_j,\O)<\infty$ for $j=1,2$. Then, given $\delta_1>\delta_2>0$ there exists a measurable set $F\sub\R^n$ such that:
    \begin{enumerate}[start=1, label=\roman*.]
        \item $\norm{\chi_F-\chi_{E_1}}_{L^1(\O)}\leq\norm{\chi_{E_2}-\chi_{E_1}}_{L^1(\O)}$,
        \item $F\cap(\O\setminus\O_{\delta_1})=E_1\cap(\O\setminus\O_{\delta_{1}})$ and $F\cap\O_{\delta_2}=E_2\cap\O_{\delta_2}$,
        \item For any $\epsilon >0$ it holds that:
        \begin{equation*}
            \begin{split}
                \pks^1(F,\O)&\leq \pks^1(E_1,\O)+\pks^1(E_2,\O_{\delta_1+\epsilon})+\frac{C(\O,c)}{\epsilon^{n+s}}\\
                &+C(\O,\delta_1,\delta_2)\left[\frac{\norm{\chi_{E_1}-\chi_{E_2}}_{L^1(\O_{\delta_1}\setminus \O_{\delta_2})}}{(1-s)}+\norm{\chi_{E_1}-\chi_{E_2}}_{L^1(\O)}\right].
            \end{split}
        \end{equation*}
    \end{enumerate}
\end{proposition}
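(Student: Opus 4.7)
I would follow the standard gluing strategy of defining $F$ as a superlevel set of a cutoff interpolation between $\chi_{E_1}$ and $\chi_{E_2}$. Fix a Lipschitz function $\phi\colon\R^n\to[0,1]$ with $\phi\equiv 1$ on $\R^n\setminus\O_{\d_1}$, $\phi\equiv 0$ on $\O_{\d_2}$, and Lipschitz constant $L$ of order $(\d_1-\d_2)^{-1}$. Set
\begin{equation*}
u:=\phi\chi_{E_1}+(1-\phi)\chi_{E_2},
\end{equation*}
a $[0,1]$-valued measurable function, and let $F_t:=\{u>t\}$ for a threshold $t\in(0,1)$ to be chosen.

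Properties (i) and (ii) will hold for every $t\in(0,1)$. For (ii), $\phi\equiv 1$ on $\O\setminus\O_{\d_1}$ forces $u=\chi_{E_1}$ there, so $F_t\cap(\O\setminus\O_{\d_1})=E_1\cap(\O\setminus\O_{\d_1})$; analogously, $\phi\equiv 0$ on $\O_{\d_2}$ gives $F_t\cap\O_{\d_2}=E_2\cap\O_{\d_2}$. For (i), a pointwise case analysis on the four combinations of $(\chi_{E_1}(x),\chi_{E_2}(x))\in\{0,1\}^2$ reveals that on $E_1\cap E_2$ one has $u\equiv 1>t$ and on $E_1^c\cap E_2^c$ one has $u\equiv 0<t$; hence $F_t\Delta E_1\subseteq E_1\Delta E_2$, which after integrating yields (i).

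The threshold $t$ will be selected so that (iii) holds, via the co-area formula for the anisotropic Gagliardo-type seminorm: the layer-cake identity $|u(x)-u(y)|=\int_0^1|\chi_{F_t}(x)-\chi_{F_t}(y)|\,dt$ (valid since $u$ takes values in $[0,1]$) combined with Fubini gives
\begin{equation*}
\F_{k_s}(u,\O)=\int_\O\int_\O k_s(x-y)\abs{u(x)-u(y)}\,dxdy=2\int_0^1\pks^1(F_t,\O)\,dt,
\end{equation*}
so by averaging there exists $t\in(0,1)$ with $\pks^1(F_t,\O)\leq\tfrac{1}{2}\F_{k_s}(u,\O)$. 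I would then bound $\F_{k_s}(u,\O)$ via the algebraic decomposition
\begin{equation*}
u(x)-u(y)=\phi(x)(\chi_{E_1}(x)-\chi_{E_1}(y))+(1-\phi(x))(\chi_{E_2}(x)-\chi_{E_2}(y))+(\phi(x)-\phi(y))(\chi_{E_1}(y)-\chi_{E_2}(y)).
\end{equation*}
The first term is dominated by $2\pks^1(E_1,\O)$ using $\phi\leq 1$. For the second, $1-\phi$ is supported in $\O_{\d_1}$, so splitting the $y$-integration into $\O_{\d_1+\e}$ and its complement gives $2\pks^1(E_2,\O_{\d_1+\e})$ on the near part, whereas on the far part $|x-y|\geq\e$ and hypothesis \ref{h2} produces the $C(\O,c)/\e^{n+s}$ correction.

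The main obstacle is the third integral $T_3:=\int\int|\phi(x)-\phi(y)|\chi_{E_1\Delta E_2}(y)k_s(x-y)\,dxdy$. Applying $|\phi(x)-\phi(y)|\leq L|x-y|$ together with \ref{h2} and the standard bound $\int_\O|x-y|^{1-n-s}\,dx\leq C(\O)(1-s)^{-1}$ naturally produces the singular factor $(1-s)^{-1}$. The delicate point is to restrict this factor to the $L^1$-norm over the transition layer $A:=\O_{\d_1}\setminus\O_{\d_2}$, while the full $L^1(\O)$-norm appears with an $s$-uniform constant. I would achieve this by splitting $T_3$ according to whether at least one of $x,y$ lies in $A$, exploiting the kernel symmetry $k_s(x-y)=k_s(y-x)$ to place the indicator $\chi_{E_1\Delta E_2}$ at the variable lying in $A$ whenever possible (producing the $C(\O,\d_1,\d_2)\|\chi_{E_1}-\chi_{E_2}\|_{L^1(\O_{\d_1}\setminus\O_{\d_2})}/(1-s)$ term), while the residual case in which $x$ and $y$ lie on opposite constant regions of $\phi$ (one in $\O_{\d_2}$, one in $\O\setminus\O_{\d_1}$) forces $|x-y|\geq\d_1-\d_2$, so that $k_s$ is uniformly bounded and yields the bulk $C(\O,\d_1,\d_2)\|\chi_{E_1}-\chi_{E_2}\|_{L^1(\O)}$ contribution.
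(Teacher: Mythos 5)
Your overall scheme --- the cutoff interpolation $u=\phi\chi_{E_1}+(1-\phi)\chi_{E_2}$, the algebraic three-term split of $u(x)-u(y)$, the coarea selection of a level set, and the treatment of the first two integrals --- coincides with the paper's argument (which follows \cite[Proposition~11]{Ambrosio}) and is sound. The gap is in the cross term $T_3$. Once you commit to the decomposition that places the defect factor at $y$, relabeling the dummy variables (which is all that kernel symmetry supplies) moves the indicator $\chi_{E_1\Delta E_2}$ \emph{and} the constraint \textquotedblleft$y\in A$\textquotedblright\ simultaneously, so it cannot put the indicator on $A$ in the regime $x\in A,\ y\notin A$. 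In that regime $\abs{x-y}$ can be arbitrarily small, and the Lipschitz bound $\abs{\phi(x)-\phi(y)}\leq L\abs{x-y}$ still yields the singular factor $(1-s)^{-1}$, now multiplying the $L^1$-mass of $\chi_{E_1\Delta E_2}$ over points $y\notin A$ near $\partial A$ --- strictly more than Item~(iii) allows. Repairing this inside your framework requires using \emph{different} algebraic decompositions of $u(x)-u(y)$ on different measurable regions of $\O\times\O$ (the swapped version $(\phi(x)-\phi(y))(\chi_{E_1}(x)-\chi_{E_2}(x))$ when $x\in A$, $y\notin A$) and then re-controlling the first two terms on those regions; that is more than a symmetry observation, and your sketch does not supply it.

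The paper sidesteps the difficulty by taking $\phi\in C^\infty$, with $\norm{D\phi}_\infty\lesssim(\d_1-\d_2)^{-1}$ and $\norm{D^2\phi}_\infty\lesssim(\d_1-\d_2)^{-2}$, and applying the second-order Taylor bound $\abs{\phi(x)-\phi(y)}\leq\abs{D\phi(y)}\abs{x-y}+\tfrac12\norm{D^2\phi}_\infty\abs{x-y}^2$. Because $D\phi$ is supported in the closure of $A$, the first-order term automatically confines the $(1-s)^{-1}$ factor from $\int_\O\abs{x-y}^{1-n-s}\,dx$ to $\norm{\chi_{E_1}-\chi_{E_2}}_{L^1(A)}$, while the second-order remainder pairs with $\int_\O\abs{x-y}^{2-n-s}\,dx\leq C(\O)/(2-s)$, which stays bounded uniformly as $s\to1^-$ and gives the $s$-uniform $\norm{\chi_{E_1}-\chi_{E_2}}_{L^1(\O)}$ contribution. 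This Taylor step, which hinges on the cutoff being $C^2$ rather than merely Lipschitz, is the essential device your plan is missing.
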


The following corollary is a direct consequence of the proposition above.
\begin{corollary}
    \label{corollaryglue}
    Suppose that $E_s\sub\R^n$, $s\in(0,1)$ and $E\sub\R^n$ are measurable sets such that $\chi_{E_s}\to\chi_E$ as $s\to1^-$ in $L^1(\O)$ and $\pks^1(E_s,\O),\pks^1(E,\O)<\infty$. Then, for any $\d_1>\d_2>0$ there exist measurable sets $F_s\sub\R^n$, $s\in(0,1)$, such that:
    \begin{enumerate}[start=1, label=\roman*.]
        \item $\chi_{F_s}\to\chi_E$ in $L^1(\O)$ as $s\to\infty$,
        \item $F_s\cap(\O\setminus\O_{\d_1})=E_s\cap(\O\setminus\O_{\d_1})$ and $F_s\cap\O_{\d_2}=E\cap\O_{\d_2}$,
        \item for any $\epsilon>0$:
        \begin{equation*}
            \liminf_{s\to1^-}(1-s)\pks^1(F_s,\O)\leq \liminf_{s\to1^-}(1-s)\pks^1(E_s,\O)+\limsup_{s\to1^-}(1-s)\pks^1(E,\O_{\d_1+\epsilon})
        \end{equation*}
    \end{enumerate}
\end{corollary}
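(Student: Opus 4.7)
The plan is to apply Proposition \ref{gluing} directly with the pair $E_1 = E_s$, $E_2 = E$ for each $s \in (0,1)$ and then to pass to the limit in the three resulting estimates. Both hypotheses $\pks^1(E_s,\O) < \infty$ and $\pks^1(E,\O) < \infty$ are built into the corollary, so the proposition is applicable and produces for every $s$ a measurable set $F_s$ that inherits its three defining properties.

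Property (ii) of the corollary is then a direct restatement of Property (ii) of the proposition, with no extra work. For Property (i), I would combine Property (i) of the proposition, which reads $\norm{\chi_{F_s} - \chi_{E_s}}_{L^1(\O)} \leq \norm{\chi_E - \chi_{E_s}}_{L^1(\O)}$, with the triangle inequality to obtain
\begin{equation*}
    \norm{\chi_{F_s} - \chi_E}_{L^1(\O)} \leq \norm{\chi_{F_s} - \chi_{E_s}}_{L^1(\O)} + \norm{\chi_{E_s} - \chi_E}_{L^1(\O)} \leq 2\,\norm{\chi_{E_s} - \chi_E}_{L^1(\O)},
\end{equation*}
which vanishes as $s \to 1^-$ by hypothesis.

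The only real content lies in Property (iii). I would multiply the estimate in Property (iii) of Proposition \ref{gluing} by $(1-s)$ and take $\liminf$ as $s \to 1^-$. The key observation is that the potentially dangerous term $C(\O,\delta_1,\delta_2)\,\norm{\chi_{E_s}-\chi_E}_{L^1(\O_{\delta_1}\setminus\O_{\delta_2})}/(1-s)$ becomes, after multiplication by $(1-s)$, simply $C(\O,\delta_1,\delta_2)\,\norm{\chi_{E_s}-\chi_E}_{L^1(\O_{\delta_1}\setminus\O_{\delta_2})}$, which vanishes thanks to $L^1(\O)$-convergence. The error $(1-s)\,C(\O,c)/\epsilon^{n+s}$ is killed outright by the $(1-s)$ prefactor, and the last term $(1-s)\,C(\O,\delta_1,\delta_2)\,\norm{\chi_{E_s}-\chi_E}_{L^1(\O)}$ vanishes for the same two reasons combined. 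Applying the elementary inequality $\liminf(a_s + b_s) \leq \liminf a_s + \limsup b_s$ to the two surviving contributions on the right-hand side produces exactly the asserted bound.

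I do not anticipate any substantive obstacle. The corollary is essentially a notational repackaging of Proposition \ref{gluing} tailored to the regime $s \to 1^-$; indeed, the proposition was engineered precisely so that the a priori troublesome $1/(1-s)$ factor in its third property becomes harmless once paired with a quantity (the $L^1$-difference of $\chi_{E_s}$ and $\chi_E$ on the thin annulus $\O_{\delta_1}\setminus\O_{\delta_2}$) that itself tends to zero with $s$.
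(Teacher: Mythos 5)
Your argument is correct and is precisely the route the paper intends: the corollary is stated without proof as a ``direct consequence'' of Proposition \ref{gluing}, and applying that proposition with $E_1 = E_s$, $E_2 = E$, multiplying the third estimate by $(1-s)$, and using that the error terms vanish together with $\liminf(a_s+b_s)\le\liminf a_s+\limsup b_s$ is exactly the intended deduction. No gaps.
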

Let us introduce some constants that will be used in the proof of the main theorem of this section.
Consider sequences $s_i\to 1^-$ and measurable sets $E_i\sub\R^n$ such that $E_i\to H$ in $L^1(Q)$. Define the following quantity:
\begin{equation}
    \label{gammandef}
    \Gamma_n\coloneq \Gamma(L^1_{loc}(\R^n))\hbox{-}\liminf_{s\to1^-}(1-s)\pks^1(H,Q).
\end{equation}

For $\delta>0$ we recall that $Q_\delta\coloneq\{x\in Q:\mathrm{dist}(x,\partial Q)<\delta\}$. Let us set:
\begin{equation}
    \Tilde{\Gamma}_n\coloneq \inf \left\{\liminf_{i\to\infty}(1-s_i)\pksi^1(F_i,Q) \right \},
\end{equation}
where the infimum is taken among all sequences of measurable sets $(F_i)_i$ such that $F_i\to H$ in $L^1(Q)$ and there exist some $\d>0$ for which, for every index $i$, we have that $F_i\cap Q_\d=H\cap Q_\d$.
We use the corollary above to prove that these two quantities are, in fact, the same. The importance of this result consists in the fact that it allows to recover the $\Gamma$-liminf \eqref{gammandef} by restricting ourselves to sequences of sets which agree with $H$ in a neighborhood of $Q^c$.
\begin{lemma}
    \label{gammaequal}
    $\Gamma_n=\Tilde{\Gamma}_n$.
\end{lemma}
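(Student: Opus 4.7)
The plan is to establish the two inequalities $\Gamma_n\leq\Tilde{\Gamma}_n$ and $\Tilde{\Gamma}_n\leq\Gamma_n$ separately. The first one is immediate, since every sequence admissible for $\Tilde{\Gamma}_n$ is also admissible for $\Gamma_n$.

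For the reverse direction, I would take an arbitrary sequence $s_i\to 1^-$ together with measurable sets $E_i\to H$ in $L^1_{loc}(\R^n)$ satisfying
$$L\coloneq\liminf_{i\to\infty}(1-s_i)\pksi^1(E_i,Q)<\infty$$
(the case $L=+\infty$ being trivial). After extracting a subsequence realizing the liminf, I may assume $\pksi^1(E_i,Q)<\infty$ for every $i$, and hypothesis \ref{h2} combined with the one-dimensional computation already used in Proposition \ref{lemma1ludw} gives $\pks^1(H,Q)\leq c\,P_s^1(H,Q)<\infty$. This puts us in position to apply Corollary \ref{corollaryglue} with $\O=Q$, $E=H$, and the sequence $E_i$ playing the role of $E_s$. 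For any fixed $\d_1>\d_2>0$ and $\e>0$, the corollary produces sets $F_i$ — which I extend trivially by $H$ outside $Q$ — such that $F_i\to H$ in $L^1_{loc}(\R^n)$ and $F_i\cap Q_{\d_2}=H\cap Q_{\d_2}$ for every $i$. Hence $(F_i)_i$ is an admissible competitor for $\Tilde{\Gamma}_n$ with parameter $\d=\d_2$, and item (iii) of the corollary gives
$$\Tilde{\Gamma}_n\leq\liminf_{i\to\infty}(1-s_i)\pksi^1(F_i,Q)\leq L+\limsup_{i\to\infty}(1-s_i)\pksi^1(H,Q_{\d_1+\e}).$$

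To close the argument I would drive the residual boundary term to zero. Using hypothesis \ref{h2} once more to pass from $k_s$ to the Euclidean kernel, and invoking Lemma \ref{lemma81} with its second boundary parameter set to $0$ (approximating $\d_1+\e$ by rationals if needed), the residual is bounded by $cC(n)P(H,Q_{\d_1+\e})$. Since $P(H,Q_\d)$ equals the $(n-1)$-dimensional measure of a boundary layer of width $\d$ inside the hyperplane slice $\{x_n=0\}\cap Q$, a direct geometric estimate shows that it vanishes as $\d\to 0^+$. Sending $\d_1,\e\to 0^+$ yields $\Tilde{\Gamma}_n\leq L$, and taking the infimum over admissible sequences gives $\Tilde{\Gamma}_n\leq \Gamma_n$.

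The main analytic effort is absorbed by Corollary \ref{corollaryglue} and Lemma \ref{lemma81}, which are already available; the argument itself is essentially a careful assembly of these ingredients. The only step that genuinely deserves attention is the vanishing of the residual boundary contribution, which hinges on the specific geometric interplay between the hyperplane $H$ and the cube $Q$ rather than on any delicate analytic estimate.
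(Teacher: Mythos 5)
Your proof is correct and follows essentially the same route as the paper's: both rely on the gluing statement of Corollary \ref{corollaryglue} to replace an arbitrary competitor sequence for $\Gamma_n$ by one that agrees with $H$ near $\partial Q$, and both control the residual boundary term via hypothesis \ref{h2} and Lemma \ref{lemma81}, then send the boundary-layer parameters to zero using $P(H,Q_\delta)\to 0$. Your version is slightly more explicit on a few technical points the paper elides — keeping the two gluing parameters $\d_1>\d_2$ distinct, noting the finiteness hypotheses $\pksi^1(E_i,Q),\pks^1(H,Q)<\infty$ required by the corollary, and justifying why $P(H,Q_\delta)$ vanishes — but there is no substantive difference in approach.
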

\begin{proof}
   Clearly $\Gamma_n\leq\Tilde{\Gamma}_n$. Let us show that the converse also holds. Let $s_i\to1^-$ and $E_i\to H$ in $L^1(Q)$. For any $\delta\in\Q$, $\delta>0$, by corollary \ref{corollaryglue} there exist a sequence of sets $(F_i)_i$ such that $F_i\cap Q_\delta=H\cap Q_\delta$, $F_i\to H$ in $L^1(Q)$ and for every $\e\in \Q$, $\epsilon>0$:
   \begin{equation}
       \liminf_{i\to\infty}(1-s_i)\pksi^1(F_i,Q)\leq \liminf_{i\to\infty}(1-s_i)\pksi^1(E_i,Q)+\limsup_{i\to\infty}(1-s_i)\pksi^1(H,Q_{\d+\epsilon}).
   \end{equation}
   Since the kernels $k_s(\cdot)$ verify \eqref{h2}, by lemma \ref{lemma81} there exists a positive constant $C(n)<\infty$ such that:
   \begin{equation}
       \limsup_{i\to\infty}(1-s_i)\pksi^1(H,Q_{\d+\epsilon})\leq c\cdot C(n) P(H,Q_{\d+\epsilon}).
   \end{equation}
   Thus:
   \begin{equation*}
       \liminf_{i\to\infty}(1-s_i)\pksi^1(F_i,Q)\leq \liminf_{i\to\infty}(1-s_i)\pksi^1(E_i,Q)+c\cdot C(n)P(H,Q_{\d+\e}).
   \end{equation*}
   By the arbitrariness of $\e\in \Q$, $\e>0$:
   \begin{equation}
       \liminf_{i\to\infty}(1-s_i)\pksi^1(F_i,Q)\leq \liminf_{i\to\infty}(1-s_i)\pksi^1(E_i,Q)+c\cdot C(n)P(H,Q_\d),
   \end{equation}
   which implies:
   \begin{equation}
       \Tilde{\Gamma}_n\leq \liminf_{i\to\infty}(1-s_i)\pksi^1(E_i,Q)+c\cdot C(n)P(H,Q_\d).
   \end{equation}
    Letting now $\delta\to0^+$ we obtain:
    \begin{equation}
        \Tilde{\Gamma}_n\leq \liminf_{i\to\infty}(1-s_i)\pksi^1(E_i,Q),
    \end{equation}
    thus ending the proof.
\end{proof}
\par

Observe that for each $s\in(0,1)$ and each measurable set $F\sub\R^n$ we may write:
    \begin{equation}
        \label{phqeq}
        \begin{split}
            \pks^1(F,Q)&=\I_{k_s}(F\cap Q,F^c\cap Q)\\
            &=\I_{k_s}(F\cap Q,\R^n)-\I_{k_s}(F\cap Q,F^c\cap Q^c)-\I_{k_s}(F\cap Q, F\cap Q^c), 
        \end{split}
    \end{equation}
which gives the following decomposition:
\begin{equation}
    \label{decomposition}
    \I_{k_s}(F\cap Q,\R^n)=\pks^1(F,Q)+\I_{k_s}(F\cap Q,F^c\cap Q^c)+\I_{k_s}(F\cap Q, F\cap Q^c).
\end{equation}
We can now state and prove the main result of this section.
\begin{theorem}[$\Gamma$-liminf inequality]
\label{gammaliminf}
For every sequence $s_i\to1^-$, $(E_i)_i$ and $E$ measurable sets such that $\chi_{E_i}\to\chi_E$ in $L^1_{loc}(\R^n)$:
\begin{equation*}
    \liminf_{i\to\infty}(1-s_i)\pksi^1(E_i,\O)\geq P_{ZB_k}(E,\O).
\end{equation*}
\begin{proof}
    Assume that:
    \begin{equation*}
         \liminf_{i\to\infty}(1-s_i)\pksi^1(E_i,\O) < \infty,
    \end{equation*}
    otherwise the statement is obvious. Upon passing to a subsequence we can also assume that:
    \begin{equation*}
        \liminf_{i\to\infty}(1-s_i)\pksi^1(E_i,\O)=\lim_{i\to\infty}(1-s_i)\pksi^1(E_i,\O).
    \end{equation*}
    Passing to the limit with respect to $i$ in \eqref{tbound} with $\O$ instead of $\O'$, we get that for every $K\Subset \O$:
    \begin{equation*}
        \norm{\tau_h \chi_E-\chi_E}_{L^1(K)}\leq C(n,\O)\abs{h}
    \end{equation*}
    for $\abs{h}$ small enough, thus $E$ has finite perimeter in $\O$ (\cite[Proposition 9.3 and subsequent remark]{Brezis}). Let us call $\mu$ the anisotropic perimeter measure of $E$ in $\O$ with respect to $ZB_k$, that's to say the Radon measure on $\O$ defined on compact sets $K\sub \O$ by:
    \begin{equation*}
        \mu(K)\coloneq\int_{\partial^*E\cap K}\abs{\nu_E(x)}_{Z^*B_k}d\H^{n-1}(x).
    \end{equation*}
    Let $\mathcal{C}$ be the set of all open cubes contained in $\O$. Recalling that:
    \begin{equation*}
        Q=\left(-\frac{1}{2},\frac{1}{2}\right)^n
    \end{equation*}
    denotes the standard unit open cube in $\R^n$ centered in the origin, every cube $C\in\mathcal{C}$ can be written as:
    \begin{equation*}
        C=x+rRQ,
    \end{equation*}
    for some $x\in\O$, $r>0$ and $R\in SO(n)$.
    Since $E$ has finite perimeter in $\O$ for every $x\in\partial^*E\cap \O$ there exist a unique $R_x\in SO(n)$ such that:
    \begin{equation*}
        \frac{E-x}{r}\to R_xH
    \end{equation*}
    as $r\to0^+$ in $L^1_{loc}(\R^n)$ (\textit{blow-up})(\cite[Theorem 1, Section 5.7]{Evansgariepy}). Moreover, calling $C(x,r)=x+rR_xQ$ it holds that (\cite[Corollary 1, Section 5.7]{Evansgariepy}):
    \begin{equation*}
        \lim_{r\to0^+}\frac{\norm{\partial E}(C(x,r))}{r^{n-1}}=1.
    \end{equation*}
    Since $\norm{\partial E}=\H^{n-1}\resmes \partial^*E $,
    using the Lebesgue-Besicovitch differentiation theorem (\cite[Theorem 1, Section 1.7]{Evansgariepy}),  we get that for any $x\in \partial^*E$:
    \begin{equation*}
        \begin{split}
            \lim_{r\to0^+}\frac{\mu(C(x,r))}{r^{n-1}} & = \lim_{r\to 0^+} \frac{\norm{\partial E}(C(x,r))}{r^{n-1}}\frac{1}{\norm{\partial E}(C(x,r))}\int_{C(x,r)}\abs{\nu_E}_{Z^*B_k}d \norm{\partial E}\\
            & =\abs{\nu_E(x)}_{Z^*B_k},
        \end{split}
    \end{equation*}
    which we can rewrite as:
    \begin{equation}
        \label{mulim}
        \lim_{r\to0^+}\frac{\mu(C(x,r))}{r^{n-1}\abs{\nu_E(x)}_{Z^*B_k}}=1.
    \end{equation}
    For a cube $C\in\mathcal{C}$ define:
    \begin{equation*}
        \a_i(C)=(1-s_i)\pksi^1(E_i,C)
    \end{equation*}
    and
    \begin{equation*}
        \a(C)=\liminf_{i\to\infty}\a_i(C).
    \end{equation*}
    We claim that:
    \begin{equation}
        \label{maininequality}
        \liminf_{r\to0^+}\frac{\a(C(x,r))}{\mu(C(x,r))}\geq 1.
    \end{equation}
    To prove so, fix $x\in\partial^*E\cap\O$. Without loss of generality we can assume that $x=0$ and $R_x=I$. By \eqref{mulim}:
    \begin{equation*}
        \liminf_{r\to0^+}\frac{\a(C(x,r))}{\mu(C(x,r))}=\liminf_{r\to0^+}\frac{\a(C(x,r))}{r^{n-1}\abs{e_n}_{Z^*B_k}}=\liminf_{r\to0^+}\frac{\a(rQ)}{r^{n-1}\abs{e_n}_{Z^*B_k}}.
    \end{equation*}
    Let $r_j\to0^+$ be a sequence such that:
    \begin{equation*}
        \liminf_{r\to0^+}\frac{\a(rQ)}{r^{n-1}\abs{e_n}_{Z^*B_k}}=\lim_{j\to\infty}\frac{\a(r_j Q)}{r_j^{n-1}\abs{e_n}_{Z^*B_k}}.
    \end{equation*}
    Let us choose a sequence of integers $i(j)\to\infty$ such that for every $j\geq 1$:
    \begin{enumerate}[start=1, label=\roman*.]
        \item $r_j^{1-s_{i(j)}}\geq\left(1-\frac{1}{j}\right)$,
        \item $\a(r_jQ)\geq\a_{i(j)}(r_jQ)-r_j^n$,
        \item \begin{equation*}
                \int_Q\abs{\chi_{E_{i(j)/r_j}}-\chi_H}dx<\frac{1}{j}. 
            \end{equation*}
    \end{enumerate}
    With this choice of $i(j)$ we get:
    \begin{equation*}
        \begin{split}
            \frac{\a(r_jQ)}{r_j^{n-1}\abs{e_n}_{Z^*B_k}} &\geq \frac{\a_{i(j)}(rQ)}{r_j^{n-1}\abs{e_n}_{Z^*B_k}}-\frac{r_j}{\abs{e_n}_{Z^*B_k}}\\
            &=\frac{(1-s_{i(j)})P^1_{k_{s_{i(j)}}}(E_{i(j)}/r_j,Q)r_j^{(n-s_{i(j)})}}{r_j^{n-1}\abs{e_n}_{Z^*B_k}}-\frac{r_j}{\abs{e_n}_{Z^*B_k}}\\
            &\geq \left(1-\frac{1}{j}\right)\frac{(1-s_{i(j)})P^1_{k_{s_{i(j)}}}(E_{i(j)}/r_j,Q)}{\abs{e_n}_{Z^*B_k}}-\frac{r_j}{\abs{e_n}_{Z^*B_k}}.
        \end{split}
    \end{equation*}
    Then:
    \begin{equation*}
        \begin{split}
            \lim_{j\to\infty}\frac{\a(r_jQ)}{r_j^{n-1}\abs{e_n}_{Z^*B_k}} & \geq\frac{1}{\abs{e_n}_{Z^*B_k}}\liminf_{j\to\infty}(1-s_{i(j)})P^1_{k_{s_{i(j)}}}(E_{i(j)}/r_j,Q)\\
            & \geq \frac{1}{\abs{e_n}_{Z^*B_k}}\inf\left\{ \liminf_{i\to\infty}(1-s_i)\pksi^1(G_i,Q):G_i\to H\textit{ in }L^1(Q)\right\}.\\
            &=\frac{1}{\abs{e_n}_{Z^*B_k}} \Gamma_n.
        \end{split}
    \end{equation*}
    Thus, by lemma \ref{gammaequal}:
    \begin{equation}
        \lim_{j\to\infty}\frac{\a(r_jQ)}{r_j^{n-1}\abs{e_n}_{Z^*B_k}}\geq \frac{1}{\abs{e_n}_{Z^*B_k}} \Tilde{\Gamma}_n.
    \end{equation}
    To get \eqref{maininequality}, we have to show that:
    \begin{equation}
        \label{tildegammain}
        \Tilde{\Gamma}_n\geq \abs{e_n}_{Z^*B_k}.
    \end{equation}
    To do so, let $s_i\to1^-$ and let $F_i$ be a sequence of measurable sets as in the definition of $\Tilde{\Gamma}_n$. Since we only want to estimate $(1-s_i)\pksi^1(F_i,Q)$ we can assume that $F_i\cap Q^c=H\cap Q^c$. By exploiting decomposition \eqref{decomposition} we can write:
    \begin{equation}
        \begin{split}
            (1-s_i)\pksi(F_i\cap Q,\R^n)=&(1-s_i)\pksi^1(F_i,Q)\\
            &+(1-s_i)\int_{F_i\cap Q}\int_{F_i^c\cap Q^c}k_{s_i}(x-y)dxdy\\
            &+(1-s_i)\int_{F_i\cap Q}\int_{F_i\cap Q^c}k_{s_i}(x-y)dxdy\\
            &\coloneq I+II+III.
        \end{split}
    \end{equation}
    By theorem \ref{gammawholespace}:
    \begin{equation*}
        \liminf_{i\to\infty}(1-s_i)\pksi(F_i\cap Q,\R^n)\geq P_{ZB_k}(H\cap Q,\R^n).
    \end{equation*}
    As for $II$, since $F_i\Delta H\sub Q\setminus Q_\d$:
    \begin{equation*}
        (1-s_i)\int_{F_i\cap Q}\int_{F_i^c\cap Q^c}k_{s_i}(x-y)dxdy=(1-s_i)\int_{H\cap Q}\int_{H^c\cap Q^c}k_{s_i}(x-y)dxdy+\o_{s_i},
    \end{equation*}
    where $\o_{s_i}\to0$ as $s_i\to1^-$.
    By lemma \ref{lemma82}:
    \begin{equation*}
        \lim_{i\to\infty}(1-s_i)\int_{H\cap Q}\int_{H^c\cap Q^c}k_{s_i}(x-y)dxdy=0,
    \end{equation*}
    thus:
    \begin{equation*}
        \lim_{i\to\infty}(1-s_i)\int_{F_i\cap Q}\int_{F_i^c\cap Q^c}k_{s_i}(x-y)dxdy=0.
    \end{equation*}
    Let us now compute the limit as $i\to\infty$ of $III$. Since the kernels $k_s(\cdot)$ are origin-symmetric we can write:
    \begin{equation}
        \begin{split}
             (1-s_i)\int_Q\int_{Q^c}k_{s_i}(x-y)dxdy=&2(1-s_i)\int_{H\cap Q}\int_{H\cap Q^c}k_{s_i}(x-y)dxdy\\
             &+2(1-s_i)\int_{H\cap Q}\int_{H^c\cap Q^c}k_{s_i}(x-y)dxdy.
        \end{split}
    \end{equation}
    By lemma \ref{lemma82}, we get that the last summand goes to $0$ as $s_i\to1$. Since by theorem \ref{pointwholespace}:
    \begin{equation}
     \lim_{i\to\infty}(1-s_i)\int_Q\int_{Q^c}k_{s_i}(x-y)dxdy=P_{ZB_k}(Q,\R^n),   
    \end{equation}
    we finally get that:
    \begin{equation}
        \begin{split}
            \lim_{i\to\infty}(1-s_i)\int_{F_i\cap Q}\int_{F_i\cap Q^c}k_{s_i}(x-y)dxdy&=\lim_{i\to\infty}(1-s_i)\int_{H\cap Q}\int_{H\cap Q^c}k_{s_i}(x-y)dxdy\\
            &=\frac{1}{2}P_{ZB_k}(Q,\R^n).
        \end{split}
    \end{equation}
    Putting all together we obtain:
    \begin{equation}
        \begin{split}
            \liminf_{i\to\infty} (1-s_i)\pksi^1(F_i,Q)& \geq  P_{ZB_k}(H\cap Q,\R^n)-\frac{1}{2}P_{ZB_k}(Q,\R^n)\\
            &=\abs{e_n}_{Z^*B_k},
        \end{split}
    \end{equation}
    and \eqref{tildegammain} is proved.\\
    To conclude the proof, for any $\epsilon>0$ define:
    \begin{equation}
        \mathcal{A}_\epsilon \coloneq \left\{C(x,r)\sub\O:\mu(C(x,r))\leq(1+\epsilon)\alpha(C(x,r))\right\}.
    \end{equation}
    By the previous step, $\mathcal{A}_\epsilon$ is a fine covering of $\mu$-almost every $x\in\O$, meaning that for $\mu$-almost every $x\in \O$ it holds that $\inf\{r>0:C(x,r)\in\mathcal{A}_\epsilon\}=0$. Since $E$ has finite perimeter in $\O$, $\mu(\O)<\infty$ and by the Besicovitch covering theorem (\cite[Section 1.5.2]{Evansgariepy}) there exists a countable subset of disjoint cubes:
    \begin{equation*}
        \left\{C_j\sub\O:j\in\N\right\}\sub \mathcal{A}_\epsilon,
    \end{equation*}
    such that:
    \begin{equation*}
        \mu\left(\O\setminus\bigcup_j C_j\right)=0.
    \end{equation*}
    Thus, we may estimate:
    \begin{equation*}
        \begin{split}
            P_{ZB_k}(E,\O)=\mu(E)=\mu\bigg(\bigcup_j C_j\bigg)=\sum_j\mu(C_j) & \leq (1+\epsilon)\sum_j\a(C_j)\\
            &\leq (1+\epsilon) \sum_j\liminf_{i\to\infty}(1-s_i)\pksi^1(E_i,C_j)\\
            &\leq (1+\epsilon) \liminf_{i\to\infty}\sum_j(1-s_i)\pksi^1(E_i,C_j)\\
        \end{split}
    \end{equation*}
    Since the functional $\pksi^1(E_i,\cdot)$ is super-additive on disjoint sets we get that:
    \begin{equation*}
        P_{ZB_k}(E,\O)\leq (1+\epsilon)\liminf_{i\to\infty}(1-s_i)\pksi^1(E_i,\O).
    \end{equation*}
    By the arbitrariness of $\epsilon>0$ the proof is complete.    
\end{proof}
\end{theorem}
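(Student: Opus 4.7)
The plan is to reduce the inequality to a local blow-up estimate at each point of $\partial^* E$ and then, at each such point, compare the rescaled perimeter of the sequence on an infinitesimal cube $C(x,r)$ aligned with $\nu_E(x)$ against the anisotropic surface energy $|\nu_E(x)|_{Z^*B_k}$. First I assume the liminf is finite and pass to a subsequence realizing it; the uniform bound \eqref{tbound}, already used in the proof of Theorem \ref{equicoerc}, passes to the $L^1_{loc}$ limit and shows that $E$ has locally finite perimeter in $\Omega$, so the measure $\mu\coloneq |\nu_E|_{Z^*B_k}\,\H^{n-1}\resmes(\partial^*E\cap\Omega)$ is well defined and finite on compact subsets of $\Omega$. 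Then I introduce the set function $\alpha(C)\coloneq \liminf_i(1-s_i)P_{k_{s_i}}^1(E_i,C)$ on open cubes $C\Subset\Omega$, the goal being $\alpha(\Omega)\geq \mu(\Omega)$.

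The core of the proof is the differentiation-type inequality
\[
\liminf_{r\to 0^+}\frac{\alpha(C(x,r))}{\mu(C(x,r))}\geq 1\qquad\text{for $\mu$-a.e. }x\in\Omega,
\]
where $C(x,r)=x+rR_xQ$ is the cube aligned with $\nu_E(x)$ via $R_x\in SO(n)$. For $x\in\partial^* E$ with (after rotation) $R_x=I$, the Lebesgue--Besicovitch differentiation theorem combined with the structure of the blow-up gives $\mu(C(x,r))/r^{n-1}\to|e_n|_{Z^*B_k}$. On the other hand, rescaling by $r_j\to 0^+$ turns $E_{i(j)}/r_j$ into a sequence converging to $H$ in $L^1(Q)$, and via the homogeneity $\ref{h1}$ of $k_s$ one extracts
\[
\liminf_{r\to 0^+}\frac{\alpha(C(x,r))}{r^{n-1}|e_n|_{Z^*B_k}}\geq \frac{\Gamma_n}{|e_n|_{Z^*B_k}},
\]
where $\Gamma_n$ is the quantity in \eqref{gammandef}. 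By Lemma \ref{gammaequal} I may replace $\Gamma_n$ by $\tilde{\Gamma}_n$, the infimum over sequences $F_i\to H$ that agree with $H$ on a fixed neighborhood of $\partial Q$.

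The main obstacle is the lower bound $\tilde{\Gamma}_n\geq |e_n|_{Z^*B_k}$. For any admissible sequence $(F_i)$ one has $F_i\cap Q^c=H\cap Q^c$, and applying the decomposition \eqref{decomposition} to $F_i\cap Q$ in $\R^n$ gives
\[
P_{k_{s_i}}(F_i\cap Q,\R^n)=P_{k_{s_i}}^1(F_i,Q)+\I_{k_{s_i}}(F_i\cap Q,F_i^c\cap Q^c)+\I_{k_{s_i}}(F_i\cap Q,F_i\cap Q^c).
\]
The set $F_i\cap Q\to H\cap Q$ in $L^1_{loc}(\R^n)$, so Theorem \ref{pointwholespace} (and its whole-space $\Gamma$-liminf in Corollary \ref{gammawholespace}) yields $\liminf_i(1-s_i)P_{k_{s_i}}(F_i\cap Q,\R^n)\geq P_{ZB_k}(H\cap Q,\R^n)$. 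The second term is handled by Lemma \ref{lemma82}: since $F_i\Delta H\subset Q\setminus Q_\delta$ for some $\delta>0$, the integrand over $(F_i\cap Q)\times(F_i^c\cap Q^c)$ differs from the one over $(H\cap Q)\times(H^c\cap Q^c)$ by a vanishing quantity as $s_i\to 1^-$, and the latter tends to zero by Lemma \ref{lemma82}. For the third term, a symmetry decomposition together with Theorem \ref{pointwholespace} applied to the unit cube identifies the limit as $\tfrac12 P_{ZB_k}(Q,\R^n)$. Subtracting gives
\[
\liminf_i(1-s_i)P_{k_{s_i}}^1(F_i,Q)\geq P_{ZB_k}(H\cap Q,\R^n)-\tfrac12 P_{ZB_k}(Q,\R^n)=|e_n|_{Z^*B_k},
\]
the last equality being a direct computation on the unit cube with flat boundary $H\cap Q$.

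To finish, fix $\varepsilon>0$ and consider the fine covering $\mathcal{A}_\varepsilon$ of cubes $C(x,r)$ with $\mu(C(x,r))\leq(1+\varepsilon)\alpha(C(x,r))$; by the previous differentiation inequality $\mathcal{A}_\varepsilon$ covers $\mu$-a.e.\ point of $\Omega$. The Besicovitch covering theorem extracts a countable disjoint subfamily $\{C_j\}$ with $\mu(\Omega\setminus\bigcup_j C_j)=0$. Summing, using super-additivity of $P^1_{k_{s_i}}(E_i,\cdot)$ over disjoint open sets and the elementary inequality $\sum_j\liminf\leq\liminf\sum$, one obtains $P_{ZB_k}(E,\Omega)\leq(1+\varepsilon)\liminf_i(1-s_i)P_{k_{s_i}}^1(E_i,\Omega)$, and $\varepsilon\to 0$ concludes. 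The technically most delicate step is the identification of $\tilde{\Gamma}_n$ above: controlling the two locality-defect terms in the decomposition requires both the whole-space $\Gamma$-limit and the isotropic cube-boundary estimate of Lemma \ref{lemma82}, and is where the reduction in Lemma \ref{gammaequal} (gluing the competitors to $H$ near $\partial Q$) is essential.
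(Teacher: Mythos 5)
Your proof plan is correct and follows essentially the same route as the paper: after establishing local finiteness of the perimeter of $E$ via the translation estimate \eqref{tbound}, you perform the blow-up/differentiation argument on aligned cubes $C(x,r)$, reduce $\Gamma_n$ to $\tilde\Gamma_n$ via Lemma \ref{gammaequal}, prove $\tilde\Gamma_n\geq|e_n|_{Z^*B_k}$ through the same decomposition \eqref{decomposition} controlled by Corollary \ref{gammawholespace}, Theorem \ref{pointwholespace} and Lemma \ref{lemma82}, and conclude by a Besicovitch covering argument together with super-additivity of $P^1_{k_{s_i}}(E_i,\cdot)$. All steps, including the use of Fatou's inequality $\sum_j\liminf_i\leq\liminf_i\sum_j$ at the end, match the paper's argument.
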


\subsection{Proof of the $\Gamma$-liminf Inequality in the One-dimensional Case}
\label{sectiondim1}
We reserve this section to the proof of Proposition \ref{1dliminf}. The structure of the proof is the same as the general case but the computations appear to be simplified. We recall that we have to prove that, if $s_i\to1^-$ and $E,E_i\sub\R$ are bounded measurable sets such that $E_i\to E$ in $L^1_{loc}(\R)$, then:
\begin{equation}
    \liminf_{i\to\infty}(1-s_i)P_{s_i}(E_i,\R)\geq P(E,\R).
\end{equation}
Suppose that:
\begin{equation*}
     \liminf_{i\to\infty}(1-s_i)P_{s_i}(E_i,\R)<\infty,
\end{equation*}
otherwise the statement is obvious. Without loss of generality let us also assume that:
\begin{equation*}
     \liminf_{i\to\infty}(1-s_i)P_{s_i}(E_i,\R)= \lim_{i\to\infty}(1-s_i)P_{s_i}(E_i,\R).
\end{equation*}
By the equi-coercivity result (Theorem \ref{equicoerc}) $E$ has finite perimeter, so we may assume that $E$ is the union of a finite number of intervals lying at mutually positive distance.\\
Fix $x_0\in\partial^*E$. For $r>0$ small enough we have that $U(x_0,r)\coloneq(x_0-r/2,x_0+r/2)$ intersects $\partial E$ only in $x_0$, which means that $\norm{\partial E}(U(x_0,r))=1$. Let us assume that $x_0$ is the right endpoint of one of the intervals of $E$.
Following the steps of the general case we call
\begin{equation}
    \a_i(U(x_0,r))=(1-s_i)P^1_{s_i}(E_i,U(x_0,r)),
\end{equation}
and
\begin{equation}
    \a(U(x_0,r))=\liminf_{i\to\infty}\a_i(U(x_0,r)).
\end{equation}
We claim that:
\begin{equation*}
    \label{limalpha1}
    \liminf_{r\to 0^+}\a(U(x_0,r))\geq 1.
\end{equation*}
Let $r_j\to 0^+$ be a sequence such  that:
\begin{equation*}
    \liminf_{r\to 0^+}\a(U(x_0,r))=\lim_{j\to\infty}\a(U(x_0,r_j)).
\end{equation*}
Foer each $j$ let us choose an index $i(j)$ such that:
\begin{enumerate}[label=\roman*.]
    \item $\a(U(x_0,r_j))\geq \a_{i(j)}(U(x_0,r_j))-\frac{1}{j}$
    \item $r_j^{1-s_{i(j)}}\geq \left(1-\frac{1}{j}\right)$
    \item \begin{equation*}
        \frac{1}{r_j}\int_{U(x_0,r_j)}\abs{\chi_{E_{i(j)}}-\chi_E}dx\leq\frac{1}{j}.
    \end{equation*}
\end{enumerate}
Then we may estimate:
\begin{equation*}
    \begin{split}
        \a(U(x_0,r_j))&\geq  \a_{i(j)}(U(x_0,r_j))-\frac{1}{j}\\
        &=(1-s_{i(j)})P^1_{s_{i(j)}}\left( \frac{E_{i(j)-x_0}}{r_j},U(0,1)\right)\,r_j^{1-s_{i(j)}}-\frac{1}{j}\\
        &\geq \left(1-\frac{1}{j}\right)(1-s_{i(j)})P^1_{s_{i(j)}}\left( \frac{E_{i(j)-x_0}}{r_j},U(0,1)\right)-\frac{1}{j}.
    \end{split}
\end{equation*}
It follows that:
\begin{equation*}
    \lim_{j\to\infty}\a(U(x_0,r_j))\geq \Gamma_1,
\end{equation*}
where $\Gamma_1$ is defined in \eqref{gammandef}. By Lemma \ref{gammaequal}, we only need to prove that, for every $\d>0$ and for every sequence of measurable sets $F_i\sub\R$ such that $F_i\to H_1\coloneq(-\infty,0]$ in $L^1_{loc}(\R)$ and $F_i=H_1$ on $\{x\in\R:\mathrm{dist}(x,U(0,1)^c)<\delta\}$, there holds:
\begin{equation}
    \liminf_{i\to\infty}(1-s_i)P^1_{s_i}(F_i,U(0,1))\geq 1.
\end{equation}
By the result on the minimality of halfspaces (\cite[Proposition 17]{Ambrosio}) we may estimate:
\begin{equation*}
    \begin{split}
        (1-s_i)P^1_{s_i}(F_i,U(0,1))&=(1-s_i)P_{s_i}(F_i,U(0,1))-(1-s_i)P^2_{s_i}(F_i,U(0,1))\\
        &\geq (1-s_i)P_{s_i}(H_1,U(0,1))-(1-s_i)P^2_{s_i}(F_i,U(0,1)).
    \end{split}
\end{equation*}
By construction:
\begin{equation*}
    \lim_{i\to\infty}(1-s_i)P^2_{s_i}(F_i,U(0,1))=0.
\end{equation*}
Thus, by an easy computation we obtain:
\begin{equation*}
    \liminf_{i\to\infty}(1-s_i)P^1_{s_i}(F_i,U(0,1))\geq \lim_{i\to\infty}(1-s_i)P_{s_i}(H_1,U(0,1))=1.
\end{equation*}
Let us now write $\partial^* E=\{x_1,...,x_m\}$ and let us fix $\e>0$. It follows from the claim above that there exist $r_1,...,r_m>0$ small enough such that $U(x_j,r_j)\cap U(x_h,r_h)=\emptyset$ for $i\neq j$, $\norm{\partial E}(U(x_j,r_j))=1$ for each $j$ and:
\begin{equation*}
    \a(U(r_j,x_j))\geq\frac{1}{1+\e}.
\end{equation*}
Hence we get the following estimate:
\begin{equation*}
    \begin{split}
        P(E,\R)=\H^0(\partial^*E)&\leq (1+\e)\sum_{j=1}^m \a(U(x_j,r_j))\\
        &\leq (1+\e) \liminf_{i\to\infty}(1-s_i)\sum_{j=1}^m P^1_{s_i}(E_i,U(x_j,r_j))\\
        &\leq (1+\e) \liminf_{i\to\infty}(1-s_i)P^1_{s_i}(E_i,\R).
    \end{split}
\end{equation*}
Since this holds for every $\e>0$ the proof is complete.

\subsection{$\Gamma$-limsup Inequality}
\label{proofgammalimsupbounded}

This section is dedicated to proving the $\Gamma(L^1_{loc})$-limsup inequality \eqref{limsup}.  We have to prove that for every measurable set $E\sub\R^n$ with finite perimeter in $\O$ and for every sequence $s_i\to1^-$ there exists a sequence of measurable sets $(E_i)_i$ such that $E_i\to E$ in $L^1_{loc}(\R^n)$ and
\begin{equation}
    \label{limsup2}
    \limsup_{i\to\infty}(1-s_i)\pksi(E_i,\O)\leq \pzbk(E,\O).
\end{equation}
From theorem \ref{gammaliminf} it follows that for such a sequence we have that:
\begin{equation*}
    \pzbk(E,\O)=\lim_{i\to\infty}(1-s_i)\pksi^1(E_i,\O)=\lim_{i\to\infty}(1-s_i)\pksi(E_i,\O),
\end{equation*}
and $(1-s_i)\pksi^2(E_i,\O)\to0$ as $s_i\to1^-$.\\
We will prove the inequality for a collection of subsets which is dense in energy with respect to the anisotropic perimeter $P_{ZB_k}(\cdot,\O)$. We say that a collection $\mathcal{B}$ of subsets of $\R^n$ is dense in energy if for every measurable set with finite perimeter $E\sub\R^n$ there exist a sequence $(E_j)_j\sub\mathcal{B}$ such that $E_j\to E$ in $L^1_{loc}(\R^n)$ and:
\begin{equation}
    \label{density}
    \limsup_{j\to\infty}P_{ZB_k}(E_j,\O)=P_{ZB_k}(E,\O).
\end{equation}
If the limsup inequality is proved for such a collection $\mathcal{B}$, then it holds also for ever measurable set $E\sub\R^n$ with finite perimeter. In fact, consider an arbitrary sequence  $s_i\to1^-$ and let  $(E_j)_j\sub\mathcal{B}$ be as above. Let $d(\cdot,\cdot)$ be a distance inducing the $L^1_{loc}(\R^n)$ convergence. If \eqref{limsup2} holds for sets in $\mathcal{B}$, for every index $j$ we may find a measurable set $\hat{E}_j$ and an index $i(j)>i(j-1)$ such that $d(E_j,\hat{E}_j)<1/j$ and:
\begin{equation*}
    (1-s_{i(j)})P_{k_{s_{i(j)}}}(\hat{E}_j,\O)\leq \pzbk(E_j,\O)+\frac{1}{j}.
\end{equation*}
Then $\hat{E_j}\to E$ in $L^1_{loc}(\R^n)$ and:
\begin{equation*}
    \limsup_{j\to\infty}(1-s_{i(j)})P_{k_{s_{i(j)}}}(\hat{E}_j,\O)\leq \pzbk(E,\O).
\end{equation*}
As for the choice of $\mathcal{B}$, we may choose the collection of measurable sets $\Pi\sub\R^n$ with polyhedral boundary such that:
\begin{equation}
    \label{partialper}
    P(\Pi,\partial\O)=0.
\end{equation}
This is possible due to the following approximation result for sets with finite perimeter in $\O$ (\cite[Theorem~A.4.]{Comi}, \cite[Proposition 15]{Ambrosio}).
\begin{theorem}
    Let $\O\sub\R^n$ be a bounded open set with Lipschitz boundary and let $E\sub\R^n$ be a measurable set such that $P(E,\O)<\infty$. Then there exists a sequence of measurable sets with polyhedral boundary $(\Pi_j)_j$ such that for every $j$:
    \begin{equation*}
        P(\Pi_j,\partial\O)=0,
    \end{equation*}
    $\Pi_j\to E$ in $L^1_{loc}(\R^n)$ and $P(\Pi_j,\O)\to P(E,\O)$ as $j\to\infty$.
\end{theorem}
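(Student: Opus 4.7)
The plan is to build the polyhedral approximants in three layers: first approximate $E$ by sets with smooth boundary, then approximate those by polyhedra via triangulation, and finally arrange the triangulation to be in general position with respect to $\partial\Omega$ so that the condition $P(\Pi_j,\partial\Omega)=0$ holds by construction rather than by a posteriori correction.

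First I would invoke the classical mollification-plus-coarea scheme: for $\chi_E\in BV(\Omega)$, the mollified functions $u_\varepsilon=\chi_E*\rho_\varepsilon$ are smooth and, by the coarea formula combined with Sard's theorem, for a.e.\ $t\in(0,1)$ the superlevel sets $E_{\varepsilon,t}=\{u_\varepsilon>t\}$ have $C^\infty$ boundary. A standard estimate shows that one can select $t_\varepsilon$ so that $E_\varepsilon:=E_{\varepsilon,t_\varepsilon}$ satisfies $E_\varepsilon\to E$ in $L^1_{loc}(\mathbb{R}^n)$ and $\limsup_{\varepsilon\to 0}P(E_\varepsilon,\Omega)\le P(E,\Omega)$; lower semicontinuity of the perimeter under $L^1_{loc}$-convergence upgrades this to equality.

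Next, for each smooth $E_\varepsilon$ I would run a piecewise linear approximation of its boundary. Fix a reference simplicial triangulation $\mathcal{T}_h^0$ of $\mathbb{R}^n$ of mesh size $h$, and define $\Pi_h^0$ as the union of the simplices of $\mathcal{T}_h^0$ whose barycenter lies in $E_\varepsilon$ (equivalently, approximate $\partial E_\varepsilon$ by PL interpolation on $\mathcal{T}_h^0$). Since $\partial E_\varepsilon$ is a compact $C^\infty$ hypersurface, the standard area-formula estimates for PL approximations of smooth manifolds give $\Pi_h^0\to E_\varepsilon$ in $L^1$ and $P(\Pi_h^0,\Omega)\to P(E_\varepsilon,\Omega)$ as $h\to 0$.

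The key step is to make the polyhedra avoid $\partial\Omega$ in the $\mathcal{H}^{n-1}$-sense. Translate the reference triangulation by a small vector $v$ and set $\mathcal{T}_h=\mathcal{T}_h^0+v$, so the associated polyhedral set $\Pi_{\varepsilon,h,v}$ has its boundary contained in the finite family of hyperplanes $\{H_i+v\}$ obtained by translating the face-hyperplanes of $\mathcal{T}_h^0$ meeting a bounded region that covers $\Omega$. Since $\partial\Omega$ is Lipschitz and hence locally $(n-1)$-rectifiable with $\mathcal{H}^{n-1}(\partial\Omega)<\infty$, the coarea formula applied to the $1$-Lipschitz map $x\mapsto x\cdot\nu_i$ on $\partial\Omega$ (with $\nu_i$ the unit normal to $H_i$) gives
\begin{equation*}
\int_{\mathbb{R}}\mathcal{H}^{n-2}\bigl(\partial\Omega\cap(H_i+t\nu_i)\bigr)\,dt\;\le\;\mathcal{H}^{n-1}(\partial\Omega)<\infty,
\end{equation*}
so for Lebesgue-a.e.\ $t$ the hyperplane $H_i+t\nu_i$ meets $\partial\Omega$ in an $\mathcal{H}^{n-1}$-null set. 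Intersecting the good sets over the finitely many directions $\nu_i$ (encoded via Fubini in the full vector $v$), I obtain a set of translations $v$ of full Lebesgue measure in any ball around $0$ for which $\mathcal{H}^{n-1}(\partial\Pi_{\varepsilon,h,v}\cap\partial\Omega)=0$, i.e.\ $P(\Pi_{\varepsilon,h,v},\partial\Omega)=0$. Since $\Pi_{\varepsilon,h,v}\to\Pi_{\varepsilon,h,0}$ in $L^1$ and $P(\Pi_{\varepsilon,h,v},\Omega)\to P(\Pi_{\varepsilon,h,0},\Omega)$ as $v\to 0$ (by dominated convergence applied to $\chi_{\partial\Pi_{\varepsilon,h,0}}\cap(\Omega-v)$), such $v$ can be chosen arbitrarily small. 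A double diagonal extraction in $\varepsilon$, $h$, and $v$ then produces the required sequence $\Pi_j$.

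The main obstacle is the interplay in this last step: a naive global polyhedral approximation followed by a translation cannot succeed, because if $P(\Pi,\partial\Omega)>0$ then $P(\Pi+v,\Omega)$ jumps discontinuously as $v$ leaves $0$, so translating to kill the boundary piece destroys perimeter convergence in $\Omega$. The resolution I am using is to arrange $P(\Pi_{\varepsilon,h},\partial\Omega)=0$ at the level of each single polyhedron before letting $h\to 0$, so the translation $v$ needed to achieve general position can be taken arbitrarily small and hence does not affect $L^1_{loc}$-convergence or the value of $P(\cdot,\Omega)$ in the limit. The technical content of this step is precisely the coarea/slicing argument above, which relies crucially on the Lipschitz regularity of $\partial\Omega$ (so that $\mathcal{H}^{n-1}(\partial\Omega)$ is locally finite and the coarea formula is available).
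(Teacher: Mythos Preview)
First, note that the paper does not prove this theorem: it is quoted without proof, with attribution to \cite[Theorem~A.4]{Comi} and \cite[Proposition~15]{Ambrosio}. So there is no in-paper argument to compare against; your outline (mollify, take superlevel sets via coarea/Sard, PL-approximate the resulting smooth hypersurface, then perturb the triangulation into general position with $\partial\Omega$ by a Fubini/slicing argument) is in the spirit of the proofs in those references.

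That said, there is a genuine circularity in your translation step. You assert that $P(\Pi_{\varepsilon,h,v},\Omega)\to P(\Pi_{\varepsilon,h,0},\Omega)$ as $v\to 0$ by dominated convergence applied to $\chi_{\Omega-v}$ on $\partial\Pi_{\varepsilon,h,0}$. But $\chi_{\Omega-v}(x)$ fails to converge precisely at points $x\in\partial\Omega$, so this dominated-convergence argument goes through only when $\mathcal{H}^{n-1}\bigl(\partial\Pi_{\varepsilon,h,0}\cap\partial\Omega\bigr)=0$, i.e.\ exactly the condition the translation is supposed to enforce. You flag the discontinuity yourself (``$P(\Pi+v,\Omega)$ jumps''), yet the resolution paragraph still leans on the continuity claim you have just shown can fail. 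A clean fix is to abandon $v=0$ as a distinguished reference: the PL area estimates for triangulating a smooth compact hypersurface are uniform in the position of the mesh, so choose a generic translate from the outset (so that $P(\Pi_h,\partial\Omega)=0$ for every $h$), and prove perimeter convergence on a slightly larger open set $U\Supset\overline\Omega$ with $P(E_\varepsilon,\partial U)=0$; together with $P(\Pi_h,\partial\Omega)=0$ this squeezes the limit on $\Omega$. You should also address that $E$ is only assumed to satisfy $P(E,\Omega)<\infty$, so it may be unbounded or have infinite perimeter outside $\Omega$: one first uses the Lipschitz regularity of $\partial\Omega$ to modify $E$ outside $\overline\Omega$ into a bounded set of finite global perimeter without affecting $E\cap\Omega$ or $P(\cdot,\Omega)$, so that $\partial E_\varepsilon$ is actually a compact smooth hypersurface and the PL approximation step is available.
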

In fact, by exploiting Reshetnyak's continuity theorem (\cite[Theorem 20.12]{Maggi}), we may also deduce that for the sequence $(\Pi_j)_j$ provided by the theorem above it holds that:
\begin{equation*}
    \pzbk(\Pi_j,\O)\to\pzbk(E,\O),
\end{equation*}
as $j\to\infty$.\\
Note that property \eqref{partialper} can be rephrased as follows. We recall that for any $\d>0$ we had set in the previous section:
\begin{equation*}
    \O_\d\coloneq\{x\in\O:\mathrm{dist}(x,\O^c)<\d\},
\end{equation*}
and:
\begin{equation*}
    \O_\d^c\coloneq\{x\in\O^c:\mathrm{dist}(x,\O)<\d\}.
\end{equation*}
Then \eqref{partialper} holds if and only if:
\begin{equation*}
    \lim_{\d\to0^+}P(\P,\O_\d\cup\O_\d^c)=0.
\end{equation*}
We proceed now to state and prove the main result of this section, which is the adaptation of \cite[Lemma 8]{Ambrosio} to the anisotropic setting.

\begin{theorem}
    \label{gammalimsup}
    Let $\Pi\sub\R^n$ be a measurable set with polyhedral boundary such that $P(\Pi,\partial\O)=0$. Then:
    \begin{equation}
        \limsup_{s\to1^-}(1-s)\pks(\Pi,\O)\leq \pzbk(\Pi,\O).
    \end{equation}
\end{theorem}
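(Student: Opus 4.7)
The plan is to combine the whole-space pointwise convergence (Theorem \ref{pointwholespace}), which provides an \emph{upper} bound on the sum of an ``interior'' and an ``exterior'' contribution, with the $\Gamma$-liminf inequality on the exterior of $\O$ (Theorem \ref{gammaliminf}), which provides a \emph{lower} bound on the exterior one. Subtracting these produces the desired upper bound on the interior part $\pks(\P,\O)$.

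Since Theorem \ref{pointwholespace} requires boundedness of the set, the first step is a truncation. Fix an open ball $B=B_R$ with $\overline{\O}\subset B_{R/2}$, chosen with $R$ generic so that $P(\P,\partial B)=0$, and set $\P':=\P\cap B$. Because $\O\subset B$ one has $\P\cap\O=\P'\cap\O$ and $\P^c\cap\O=(\P')^c\cap\O$, which immediately gives $\pks^1(\P,\O)=\pks^1(\P',\O)$. A direct expansion of $\pks^2$ shows $|\pks^2(\P,\O)-\pks^2(\P',\O)|\leq 2\,\I_{k_s}(\O,B^c)$, and hypothesis \ref{h2} bounds the latter by $O(R^{-s})$. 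Multiplying by $(1-s)$ and sending $s\to1^-$ with $R$ fixed, this error vanishes, while $\pzbk(\P,\O)=\pzbk(\P',\O)$ since $\P$ and $\P'$ coincide in a neighborhood of $\overline{\O}$. It therefore suffices to prove the inequality with $\P'$ in place of $\P$.

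For the bounded set $\P'$, partition $\R^n=\O\cup\O^c$ inside $\I_{k_s}(\P',(\P')^c)$ to get
$$
\pks(\P',\R^n)=\pks(\P',\O)+\pks^1(\P',\O^c).
$$
Theorem \ref{pointwholespace} gives $(1-s)\pks(\P',\R^n)\to\pzbk(\P',\R^n)$, so taking limsup yields
$$
\limsup_{s\to1^-}(1-s)\pks(\P',\O)\leq \pzbk(\P',\R^n)-\liminf_{s\to1^-}(1-s)\pks^1(\P',\O^c).
$$
To bound the last liminf from below, apply Theorem \ref{gammaliminf} (componentwise if necessary) to the bounded Lipschitz open set $\tilde{\O}:=B\setminus\overline{\O}$ with the constant sequence $\P'$, obtaining $\liminf_{s\to1^-}(1-s)\pks^1(\P',\tilde{\O})\geq\pzbk(\P',\tilde{\O})$; trivial monotonicity gives $\pks^1(\P',\tilde{\O})\leq \pks^1(\P',\O^c)$. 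Using that $\partial^*\P'\subset\overline{B}$, that $P(\P,\partial\O)=0$ by hypothesis (so $\pzbk(\P',\partial\O)=0$ since $\pzbk\leq c\,P$), and that $P(\P,\partial B)=0$ by the choice of $R$, one concludes $\pzbk(\P',\tilde{\O})=\pzbk(\P',\R^n)-\pzbk(\P',\O)$. Substituting gives $\limsup_{s\to1^-}(1-s)\pks(\P',\O)\leq \pzbk(\P',\O)=\pzbk(\P,\O)$.

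The main obstacle is the truncation step: one has to expand $\pks^2$ carefully and exploit \ref{h2} to ensure the ``tail'' contributions are controlled by an $(1-s)$-vanishing term uniformly in $R$ large enough. Once this reduction to bounded $\P$ is in place, the remainder of the argument is essentially bookkeeping between the pointwise convergence on $\R^n$ and the exterior $\Gamma$-liminf.
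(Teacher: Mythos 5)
Your approach is genuinely different from the paper's. The paper proves the limsup inequality for $\pks^1$ directly, by covering $\partial\P\cap\O$ with small cubes aligned with the faces, splitting the interaction integral according to the distance to the boundary, and invoking Theorem \ref{pointwholespace} together with Lemma \ref{lemma82} only locally, cube by cube; then handles $\pks^2$ separately. You instead obtain the bound by a single global subtraction: decompose $\pks(\P',\R^n)=\pks(\P',\O)+\pks^1(\P',\O^c)$, use pointwise convergence on $\R^n$ for the left-hand side, and use the already-proved $\Gamma$-liminf on the exterior to bound $\pks^1(\P',\O^c)$ from below. That is a clean alternative route (with no circularity, since the paper proves the liminf before the limsup), and it buys you a shorter argument at the cost of making the limsup depend on the liminf machinery.

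However, as written there is a concrete gap in the claimed identity
\begin{equation*}
\pzbk(\P',\tilde\O)=\pzbk(\P',\R^n)-\pzbk(\P',\O),\qquad \tilde\O=B\setminus\overbar{\O}.
\end{equation*}
The truncation $\P'=\P\cap B$ creates a piece of reduced boundary on the sphere $\partial B$: up to $\H^{n-1}$-null sets, $\partial^*\P'=(\partial^*\P\cap B)\cup(\partial B\cap\P^{(1)})$, where $\P^{(1)}$ is the set of density-$1$ points of $\P$. The spherical cap $\partial B\cap\P^{(1)}$ has $\H^{n-1}$-measure of order $R^{n-1}$ whenever $\P$ is unbounded, and it lies in $\partial B$, hence in neither $\O$ nor $\tilde\O$. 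The correct decomposition is therefore $\pzbk(\P',\R^n)=\pzbk(\P',\O)+\pzbk(\P',\tilde\O)+\pzbk(\P',\partial B)$, and the subtraction only yields $\limsup(1-s)\pks(\P',\O)\leq\pzbk(\P',\O)+\pzbk(\P',\partial B)$, which is not what you want. The justification you invoke, $P(\P,\partial B)=0$ for generic $R$, concerns the original set $\P$, not the truncated set $\P'$, and does not make the cap contribution vanish.

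The fix is minor: take $\tilde\O:=B_{R'}\setminus\overbar{\O}$ with $R'>R$ strictly larger than the truncation radius, so that $\partial B_R\subset\tilde\O$ and the cap is absorbed into $\pzbk(\P',\tilde\O)$. Since $\partial^*\P'\subset\overbar{B_R}\subset B_{R'}$, the contributions on $\partial B_{R'}$ and outside $B_{R'}$ vanish, and (using $P(\P,\partial\O)=0$) you do get $\pzbk(\P',\tilde\O)=\pzbk(\P',\R^n)-\pzbk(\P',\O)$. With this adjustment, together with the "componentwise" caveat you already note for applying Theorem \ref{gammaliminf} to the possibly disconnected annular region, the argument closes.
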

\begin{proof}
    We recall that:
    \begin{equation*}
         \pks(\P,\O)=\pks^1(\P,\O)+\pks^2(\P,\O).
    \end{equation*}
    The proof is divided into two steps. First, we prove that:
    \begin{equation}
        \limsup_{s\to1^-}(1-s)\pks^1(\P,\O)\leq\pzbk(\Pi,\O).
    \end{equation}
    Then we move on to showing that:
    \begin{equation}
        \limsup_{s\to1^-}(1-s)\pks^2(\P,\O)=0.
    \end{equation}
    \textit{Step 1.} For $\e>0$ we call:
    \begin{equation*}
        \partial_\e \P\coloneq\{x\in \O:d(x,\partial\P) <\e/2 \},
    \end{equation*}
    and:
    \begin{equation*}
        \partial_\e\P^-\coloneq \partial_\e\P\cap\P.
    \end{equation*}
    For each $\e>0$ consider finite collections of $N_\e$ cubes $(Q_i^\e)_i$, $i=1,...,N_\e$, such that:
    \begin{enumerate}[start=1, label=\roman*.]
        \item $Q_i^\e$ has side length $\e$, $Q_i^\e\sub \O$ and its barycenter belongs to $\partial \P\cap \O$ for each $i=1,...,N_\e$. 
        \item If we denote by $\Tilde{Q}_i^\e$ the dilation of $Q_i^\e$ by a factor $(1+\e)$, then each $\Tilde{Q}_i^\e$ has non empty intersection with only one face $\Sigma$ of the polyhedral boundary of $\P$ and its faces are either parallel or orthogonal to $\Sigma$,
        \item Calling $P_\e\coloneq \bigcup_{i=1}^{N_\e}Q_i^\e$, it holds that $\H^{n-1}((\partial\P\cap\O)\setminus P_\e)\to 0$ as $\e\to0^+$.
    \end{enumerate}
    For every $x\in\P\cap\O$ set:
    \begin{equation}
        I_s(x)\coloneq \int_{\P^c\cap\O}k_s(x-y)dy.
    \end{equation}
    With this notation we may write:
    \begin{equation*}
        \begin{split}
            \pks^1(\P,\O)=\int_{\P\cap\O}I_s(x)dx&=\int_{(\P\cap\O)\setminus\dpe^-}I_s(x)dx
            +\int_{\dpe^-\setminus P_\e}I_s(x)dx
            +\int_{P_\e\cap \P}I_s(x)dx\\
            &=I+II+III.
        \end{split}
    \end{equation*}
    Let us estimate each of the last three integrals separately.
    \par
    $I.$ If $x\in (\P\cap\O)\setminus\dpe^-$, then for every $y\in \P^c\cap\O$ we have that $\abs{x-y}\geq\e/2$; thus, since  the kernels $k_s(\cdot)$ verify hypothesis \eqref{h2} :
    \begin{equation}
        I_s(x)\leq\int_{B(x,\e/2)^c}k_s(x-y)dy\leq c\int_{B(x,\e/2)^c}\frac{1}{\abs{x-y}^{n+s}}dy=c\frac{n\o_n 2^s}{s\e^s},
    \end{equation}
    which gives:
    \begin{equation}
        \int_{(\P\cap\O)\setminus\dpe^-}I_s(x)dx\leq c\frac{n\o_n 2^s \L^n(\P\cap\O)}{s\e^s}.
    \end{equation}
    Hence, for every $\e>0$:
    \begin{equation}
        \limsup_{s\to1^-}(1-s) \int_{(\P\cap\O)\setminus\dpe^-}I_s(x)dx=0.
    \end{equation}
    \par
    $II.$ Let us fix $x\in \dpe^-\setminus P_\e$ and set $\d_x=d(x,\P^c\cap\O)$. Then, following the same argument as above we may estimate:
    \begin{equation}
        I_s(x)\leq \int_{B(x,\d_x)^c}k_s(x-y)dy\leq c\frac{n\o_n}{s\d_x^s}.
    \end{equation}
    Let us now write:
    \begin{equation*}
        \partial\P\cap\O=\bigcup_{j=1}^M \Sigma_j,
    \end{equation*}
    where every $\Sigma_j$ is the intersection of a face of the boundary of $\P$ with $\O$. Moreover, for every $j=1,...,m$ define $\Sigma_{j,\e}$ as the sets of all points belonging to the same hyperplane as $\Sigma_j$ whose distance from $\Sigma_j$ is at most $\e$, and
    \begin{equation*}
        \dpej^-\coloneq \{x\in \dpe^-:\d_x=d(x,\Sigma_j)\}.
    \end{equation*}
    We have that for every $j=1,..,m$, calling $\nu_j$ the inward pointing normal unit vector to $\Sigma_{j,\e}$:
    \begin{equation*}
        \dpej^-\sub \{x+t\nu:x\in \Sigma_{j,\e}\textit{ and }t\in(0,\e/2)\},
    \end{equation*}
    and also:
    \begin{equation*}
        \dpe^-=\bigcup_{j=1}^m\dpej^-.
    \end{equation*}
    Thus, we may estimate:
    \begin{equation*}
        \begin{split}
            \int_{\dpe^-\setminus P_\e}I_s(x)dx&\leq c\frac{n\o_n}{s}\sum_{j=1}^m\int_{\dpej^-\setminus P_\e}\frac{1}{\d_x^s}dx\\
            &\leq c\frac{n\o_n}{s}\sum_{j=1}^m\int_{\dpej^-\setminus P_\e}\frac{1}{d(x,\Sigma_{j,\e})}dx\\
            &\leq c\frac{n\o_n}{s} \sum_{j=1}^m \int_{\Sigma_{j,\e}\setminus P_\e}\left(\int_0^{\e/2}\frac{1}{t^s}dt\right)d\H^{n-1}(x)\\
            &=c\frac{n\o_n\e^{1-s}}{2^{1-s}s(1-s)}\H^{n-1}\left(\bigcup_j \Sigma_{j,\e}\setminus P_\e\right).
        \end{split}
    \end{equation*}
    Since as $\e\to0^+$
    \begin{equation*}
        \H^{n-1}\left(\bigcup_j \Sigma_{j,\e}\setminus P_\e\right)\to 0
    \end{equation*}
    independently of $s\in(0,1)$, we finally have that:
    \begin{equation}
        \limsup_{s\to1^-}(1-s)\int_{\dpe^-\setminus P_\e}I_s(x)dx\leq \o_\e,
    \end{equation}
    where $\o_\e\to 0$ as $\e\to 0^+$.\\  
    $III.$ For $x\in \P\cap P_\e$, write:
    \begin{equation*}
        \begin{split}
             I_s(x)&=\int_{(\P^c\cap\O)\cap B(x,\e^2/2)^c}k_s(x-y)dy+\int_{(\P^c\cap\O)\cap B(x,\e^2/2)}k_s(x-y)dy\\
             &=I^1_s(x)+I^2_s(x).
        \end{split}
    \end{equation*}
    As for $I^1_s(x)$, with the same argument as in the previous cases we get:
    \begin{equation*}
        I^1_s(x)\leq c\frac{n\o_n2^s}{s\e^{2s}},
    \end{equation*}
    which gives, since every $Q_i^\e$ is contained in $\O$:
    \begin{equation}
        \int_{\P\cap P_\e}I_s^1(x)dx\leq  c\frac{n\o_n 2^s \L^n(\O)}{s\e^{2s}},
    \end{equation}
    and so:
    \begin{equation}
        \limsup_{s\to1^-}(1-s) \int_{\P\cap P_\e}I_s^1(x)dx=0.
    \end{equation}
    In order to get  an estimate of the integral of $I^2_s(x)$, note that if $x\in Q_i^\e$, then, since every $\Tilde{Q}_i^{\e}$ has side length $\e(1+\e)$,  $B(x,\e^2/2)\sub\Tilde{Q}_i^\e$. So:
    \begin{equation}
        \int_{\P\cap P_\e}I^2_s(x)dx\leq \sum_{i=1}^{N_\e}\int_{\P\cap Q_i^\e}\int_{\P^c\cap\Tilde{Q}_i^\e}k_s(x-y)dxdy\leq \sum_{i=1}^{N_\e}\int_{\P\cap \Tilde{Q}_i^\e}\int_{\P^c\cap\Tilde{Q}_i^\e}k_s(x-y)dxdy.
    \end{equation}
    For every $i=1,...,N_\e$ let $\Sigma_i$ be the only face of $\partial\P\cap\O$ with which $\tildeqi$ has nonempty intersection and let $z_i\in\Sigma_i$ be the barycenter of $\tildeqi$. Moreover, call $n_i$ the outer normal unit vector to $\Sigma_i$ and $H_i$ the halfspace:
    \begin{equation*}
        H_i\coloneq \{y\in\R^n:(y-z_i)\cdot n_i\leq 0\}.
    \end{equation*}
    We this notation we may write:
    \begin{equation*}
        \begin{split}
            \int_{\P\cap P_\e}I^2_s(x)dx&\leq\sum_{i=1}^{N_\e}\int_{\P\cap \Tilde{Q}_i^\e}\int_{\P^c\cap\Tilde{Q}_i^\e}k_s(x-y)dxdy\\
            &=\sum_{i=1}^{N_\e}\pks^1(H_i,\tildeqi),
        \end{split}
    \end{equation*}
    and so:
    \begin{equation}
        \label{i2sest}
        (1-s)\int_{\P\cap P_\e}I^2_s(x)dx\leq \sum_{i=1}^{N_\e}(1-s)\pks^1(H_i,\tildeqi).
    \end{equation}
    We need to prove now that for each $i=1,...,N_\e$ it holds that:
    \begin{equation}
        \label{limqi}
        \limsup_{s\to1^-}(1-s)\pks^1(H_i,\tildeqi)=\pzbk(H_i,\tildeqi).
    \end{equation}
    Without loss of generality, we may restrict ourselves to proving:
    \begin{equation}
        \label{limq}
        \limsup_{s\to1^-}(1-s)\pks^1(H,Q)=\pzbk(H,Q),
    \end{equation}
    where $Q$ is the origin-centered open unit cube and and $H=\{x_n\leq 0\}$ as above. Actually, we will get a stronger statement, proving that the one appearing in \eqref{limq} is, in fact, a limit.\\
   By decomposition \eqref{decomposition} we have that:
    \begin{equation}
        \label{limsupdecomposition}
        \begin{split}
            \pks^1(H,Q)=\I_{k_s}(H\cap Q,\R^n)-\I_{k_s}(H\cap Q,H^c\cap Q^c)-\I_{k_s}(H\cap Q, H\cap Q^c). 
        \end{split}
    \end{equation}
    Also:
    \begin{equation*}
       \begin{split}
            \I_{k_s}(H\cap Q,H\cap Q^c)=&\I_{k_s}(Q,\R^n)-\I_{k_s}(H^c\cap Q,H^c\cap Q^c)-I_s(H^c\cap Q,H\cap Q^c)\\
            &-I_s(H\cap Q,H^c\cap Q^c).
       \end{split}
    \end{equation*}
    Since the kernels $k_s(\cdot)$ are origin-symmetric:
    \begin{equation*}
        \I_{k_s}(H\cap Q,H\cap Q^c)=\I_{k_s}(H^c\cap Q,H^c\cap Q^c).
    \end{equation*}
    Hence we get:
    \begin{equation}
        \label{2iseq}
        2 \I_{k_s}(H\cap Q,H\cap Q^c)=\I_{k_s}(Q,\R^n)-\I_{k_s}(H^c\cap Q,H\cap Q^c)-\I_{k_s}(H\cap Q,H^c\cap Q^c).
    \end{equation}
    Taking now into account \eqref{2iseq} and multiplying each member by $(1-s)$, \eqref{limsupdecomposition} can be rewritten as:
    \begin{equation}
    \label{phqfinal}
        \begin{split}
            (1-s)\pks^1(H,Q)=&(1-s)\bigg[\pks(H\cap Q,\R^n)-\I_{k_s}(H\cap Q,H^c\cap Q^c)\\
            &-\frac{1}{2}\bigg( \pks(Q,\R^n)-\I_{k_s}(H^c\cap Q,H\cap Q^c)-\I_{k_s}(H\cap Q,H^c\cap Q^c)\bigg)\bigg]\\
        \end{split}
    \end{equation}
    By theorem \ref{pointwholespace}:
    \begin{equation}
        \lim_{s\to1^-}(1-s)\pks(H\cap Q,\R^n)=\pzbk(H\cap Q,\R^n),
    \end{equation}
    and
    \begin{equation*}
        \lim_{s\to1^-}(1-s)\pks(Q,\R^n)=\pzbk(Q,\R^n).
    \end{equation*}
    Moreover, by lemma \ref{lemma82}:
    \begin{equation*}
            \lim_{s\to1^-}(1-s)\I_{k_s}(H\cap Q,H^c\cap Q^c)=\lim_{s\to1^-}(1-s)\I_{k_s}(H^c\cap Q,H\cap Q^c)=0.
    \end{equation*}
    Thus, from \eqref{phqfinal} we get that:
    \begin{equation}
        \lim_{s\to1^-}(1-s)\pks^1(H,Q)=\pzbk(H\cap Q,\R^n))-\frac{1}{2}\pzbk(Q,\R^n)=\pzbk(H,Q).
    \end{equation}
    Going back now to \eqref{i2sest}, we have that:
    \begin{equation}
        \limsup_{s\to1^-}(1-s)\int_{\P\cap P_\e}I_s^2(x)dx\leq \sum_{i=1}^{N_\e}\pzbk(H_i,\tildeqi)=\pzbk(\Pi,\O)+ \o_\e,
    \end{equation}
    where $\o_\e\to0$ as $\e\to0^+$.
    Combining now together the estimates for $I.$, $II.$ and $III.$ we obtain:
    \begin{equation}
        \limsup_{s\to1^-}(1-s)\pks^1(\Pi,\O)\leq \pzbk(\Pi,\O)+\o_\e.
    \end{equation}
    By the arbitrariness of $\e>0$ we conclude.\\
    \textit{Step 2.} The argument we use here is the same as the one used in the proof of lemma \ref{lemma82}. We wish to prove that:
    \begin{equation*}
           \limsup_{s\to1^-}(1-s)\pks^2(\P,\O)=0.
    \end{equation*}
    First, consider the integral:
    \begin{equation*}
        \int_{\P\cap \O}\int_{\P^c\cap \O^c}k_s(x-y)dxdy.
    \end{equation*}
    By following the same steps of the proof of lemma \ref{lemma82} we obtain that:
    \begin{equation*}
        \int_{\Pi\cap\O}\int_{\P^c\cup\O^c}k_s(x-y)dxdy\leq c\frac{2n\o_n\L^n(\O)}{s\d^s}+\int_{\P\cap(\O_\d\cup\O_\d^c)}\int_{\P^c\cap(\O_\d\cup\O_\d^c)}k_s(x-y)dxdy.
    \end{equation*}
    By swapping the role of $\P$ and $\P^c$ and combining both estimates we get that:
    \begin{equation*}
        \pks^2(\Pi,\O)\leq c\frac{4n\o_n\L^n(\O)}{s\d^s}+2\pks^1(\Pi,\O_\d\cup\O_\d^c),
    \end{equation*}
    which, by the result of the previous step, yields:
    \begin{equation*}
        \limsup_{s\to1^-}(1-s)\pks^2(\P,\O)\leq 2\pzbk(\P,\O_\d\cup\O_\d^c).
    \end{equation*}
    Finally, since $P(\Pi,\partial\O)=0$ we have that:
    \begin{equation*}
        \lim_{\d\to0^+}\pzbk(\P,\O_\d\cup\O_\d^c)=0.
    \end{equation*}
\end{proof}

\section{Convergence of Local Minimizers}

We show in this section that local minimizers for the anisotropic fractional perimeters $\pks(\cdot,\O)$, $s\in(0,1)$, converge in $L^1_{loc}(\R^n)$ to local minimizers of $\pzbk(\cdot, \O)$. \\
The proof of the convergence of local minimizers in the anisotropic case is essentially same as the one for the isotropic case (\cite[Theorem 3]{Ambrosio}), only minor changes being necessary. We reproduce it here for completeness. The proof of the theorem relies on some properties of monotone set functions, which we will briefly illustrate in section \ref{monsetfunct}. For a detailed presentation of the topic we refer to \cite[Chapter 14]{DalMaso}. Section \ref{proofconvlocmin} is dedicated to the proof of Theorem \ref{convlocmin}.

\subsection{Monotone Set Functions}
\label{monsetfunct}

Let us denote with $\mathcal{P}(\O)$ the set of all subsets of $\O$, and with $\mathcal{A}(\O)$ and $\mathcal{K}(\O)$ respectively the set of all open and compact subsets of $\O$.\\
A set function $\a:\mathcal{P}(\O)\to [0,+\infty]$ is said to be \textit{monotone} if, for every $A,B\sub \mathcal{P}(\O)$ such that $A\sub B$, it holds that:
\begin{equation*}
    \a(A)\leq \a(B).
\end{equation*}
The set function $\a$ is said to be \textit{regular} if for every $A\in\mathcal{A}(\O)$ we have:
\begin{equation*}
    \a(A)\coloneq \sup\{\a(K):K\in \mathcal{K}(\O),\,K\sub A\},
\end{equation*}
and for every $F\in \mathcal{P}(\O)$:
\begin{equation*}
    \a(F)\coloneq\inf\{\a(A):A\in\mathcal{A}(\O),\,F\sub\mathcal{A}\}.
\end{equation*}
Let $\a_i:\mathcal{P}(\O)\to[0,+\infty]$ be a regular monotone set function for $i\in\N$ and let $\a:\mathcal{P}(\O)\to[0,+\infty]$ be a regular monotone set function. We say that $(a_i)_i$ \textit{weakly converges} to $\a$ if the following two conditions holds:
\begin{enumerate}[label=\roman*.]
    \item for every $A\in\mathcal{A}(\O)$:
    \begin{equation*}
        \liminf_{i\to\infty}\a_i(A)\geq \a(A),
    \end{equation*}
    \item for every $K\in \mathcal{K}(\O)$:
    \begin{equation*}
        \limsup_{i\to\infty}\a_i(K)\leq \a(K).
    \end{equation*}
\end{enumerate}
The next theorem, first proved by De Giorgi and Letta in \cite{DeGiorgiLetta}, provides a compactness result with respect to the weak convergence for sequences of equi-bounded monotone set functions. We omit the proof here, for which we refer to \cite[Theorem 21]{Ambrosio}.
\begin{theorem}[De Giorgi, Letta]
    \label{Letta}
    Let $\a_i:\mathcal{P}(\O)\to[0,+\infty]$ be a sequence of regular monotone set functions. Suppose that for every $A\in\mathcal{A}(\O)$ such that $A\Subset \O$ there holds:
    \begin{equation*}
        \limsup_{i\to\infty}\a_i(A)<\infty.
    \end{equation*}
    Then, there exist a regular monotone set function $\a:\mathcal{P}(\O)\to [0,+\infty]$ and a subsequence $(\a_{i_j})_j$ such that $(\a_{i_j})_j$ weakly converges to $\a$. Moreover, if $(\a_i)_i$ is super-additive on disjoint opens sets, the so is $\a$.
\end{theorem}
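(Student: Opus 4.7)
The plan is to combine a diagonal extraction over a countable family of "test" open sets with an inner/outer regular envelope construction, the standard strategy for De Giorgi--Letta type compactness. First I would fix a countable family $\mathcal{C}=\{C_m\}_{m\in\N}$ of open subsets of $\O$ with $\overline{C_m}\Subset\O$, chosen so that (a) $\mathcal{C}$ is closed under finite unions, and (b) every compact $K\Subset\O$ and every open $A\supset K$ admit a member $C\in\mathcal{C}$ with $K\sub C\sub\overline{C}\sub A$. Concrete model: take $\mathcal{C}$ to be all finite unions of open balls with rational centres and rational radii whose closures lie in $\O$. For each $C\in\mathcal{C}$ the hypothesis gives $\sup_i\a_i(C)<\infty$, so by a standard diagonal argument one extracts a subsequence (still denoted $(\a_{i_j})_j$) such that the limit $\ell(C):=\lim_{j\to\infty}\a_{i_j}(C)$ exists in $[0,+\infty)$ for every $C\in\mathcal{C}$.

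Next I would define the candidate limit by an inner-regular envelope on open sets:
\begin{equation*}
\a(A):=\sup\bigl\{\ell(C):C\in\mathcal{C},\ \overline{C}\sub A\bigr\}\qquad\text{for }A\in\mathcal{A}(\O),
\end{equation*}
and then extend it to arbitrary subsets of $\O$ by outer regularity,
\begin{equation*}
\a(F):=\inf\bigl\{\a(A):A\in\mathcal{A}(\O),\ F\sub A\bigr\}\qquad\text{for }F\in\mathcal{P}(\O).
\end{equation*}
Monotonicity of $\a$ is immediate from the construction and the monotonicity of the $\a_{i_j}$. Outer regularity on all subsets is built in; inner regularity on open sets follows from property (b) of $\mathcal{C}$ together with the fact that compact $K\sub A$ can be sandwiched in some $C\in\mathcal{C}$ with $\overline{C}\sub A$, so $\a(K)\le\ell(C)\le\a(A)$ and the sup of $\a(K)$ over compact $K\sub A$ equals $\a(A)$.

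The weak convergence is then checked in two steps. For open $A$ and any $C\in\mathcal{C}$ with $\overline{C}\sub A$ monotonicity gives $\a_{i_j}(C)\le \a_{i_j}(A)$, so $\ell(C)\le\liminf_j\a_{i_j}(A)$, and taking the sup over $C$ yields $\a(A)\le\liminf_j\a_{i_j}(A)$. For compact $K$ and any open $A\supset K$ with $\overline{A}\Subset\O$, property (b) produces $C\in\mathcal{C}$ with $K\sub C\sub\overline{C}\sub A$, whence $\a_{i_j}(K)\le \a_{i_j}(C)$ and so $\limsup_j\a_{i_j}(K)\le\ell(C)\le\a(A)$; taking the infimum over such $A$ and invoking the outer regularity defining $\a(K)$ gives $\limsup_j\a_{i_j}(K)\le\a(K)$. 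Super-additivity on disjoint open sets passes to the limit: for disjoint $A_1,A_2$ and any $C_i\in\mathcal{C}$ with $\overline{C_i}\sub A_i$, the $C_i$ are disjoint and $C_1\cup C_2\in\mathcal{C}$, so $\a_{i_j}(C_1\cup C_2)\ge\a_{i_j}(C_1)+\a_{i_j}(C_2)$; passing to $\ell$ and using that $\ell(C_1\cup C_2)\le\a(A_1\cup A_2)$, then taking the sup over $C_1$ and $C_2$ independently, yields $\a(A_1\cup A_2)\ge\a(A_1)+\a(A_2)$.

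The main obstacles I anticipate are bookkeeping rather than conceptual. The first is the choice of $\mathcal{C}$: one needs enough sets to approximate every compact set from outside and every open set from inside, while keeping $\mathcal{C}$ countable and closed under finite unions so that both the diagonal extraction and the super-additivity argument go through. The second subtlety is verifying that outer regularity of $\a$ on all of $\mathcal{P}(\O)$ is genuinely compatible with the inner-regular definition used on open sets; this requires showing $\inf\{\a(A):A\supset A_0\text{ open}\}=\a(A_0)$ when $A_0$ is itself open, which reduces to the set-theoretic fact that $\mathcal{C}$ provides an inner approximation of any open set and that monotonicity propagates the sup upward. Once these two points are settled, the verifications above produce $\a$ with all the required properties.
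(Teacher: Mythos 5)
Your overall strategy is the standard De Giorgi--Letta compactness argument and matches the reference the paper cites in lieu of a proof (\cite[Theorem 21]{Ambrosio}): diagonal extraction over a countable base of compactly contained open sets closed under finite unions, followed by inner and outer regular envelopes. The diagonal step, the outer extension to $\mathcal{P}(\O)$, the $\liminf$ inequality on open sets, the $\limsup$ inequality on compacts, and the passage of super-additivity are all structurally sound.

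There is, however, a genuine gap in your verification of inner regularity. Your chain $\a(K)\le\ell(C)\le\a(A)$, valid whenever $K\sub C\sub\overline{C}\sub A$ with $C\in\mathcal{C}$, only establishes the easy inequality $\sup\{\a(K):K\text{ compact},\ K\sub A\}\le\a(A)$, which already follows from monotonicity. What inner regularity actually requires is the reverse inequality, and that hinges on an observation that nowhere appears in your write-up: for any $C\in\mathcal{C}$ with $\overline{C}\sub A$, the compact set $\overline{C}$ satisfies $\a(\overline{C})\ge\ell(C)$. This holds because every open $A'\supset\overline{C}$ has $\overline{C}\sub A'$, so $C$ contributes to the supremum defining $\a(A')$ and hence $\a(A')\ge\ell(C)$; taking the infimum over such $A'$ gives $\a(\overline{C})\ge\ell(C)$. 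Only then can one take the supremum over $C$ to conclude $\sup\{\a(K):K\sub A\}\ge\a(A)$. A smaller, related point: in the $\limsup$ step on compacts you restrict the infimum to open $A\supset K$ with $\overline{A}\Subset\O$; as written this yields $\limsup_j\a_{i_j}(K)$ bounded by an infimum over a \emph{smaller} family than the one defining $\a(K)$, which a priori gives a number $\ge\a(K)$, not $\le\a(K)$. The restriction is unnecessary (your property (b) already applies to any open $A\supset K$), and dropping it closes that loose end without further argument.
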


\subsection{Proof of theorem \ref{convlocmin}}
\label{proofconvlocmin}

\textit{Step 1.} We start by proving \eqref{minbound}. For $\d>0$ define $\O_\d$ as in the previous sections and for every index $i$ set $F_i\coloneq E_i\cap(\O^c\cup\O_\d)$. By the additivity of the integral with respect to the integration domain we get:
\begin{equation*}
    (1-s_i)\pksi(E_i,\O\setminus\O_\d)\leq(1-s_i)\pksi(E_i,\O)-(1-s_i)\pksi^1(E_i,\O_\d).
\end{equation*}
Since every $E_i$ is a local minimizer for $\pksi(\cdot,\O)$ and $E_i=F_i$ in $\O_\d$:
\begin{equation*}
    \begin{split}
        \limsup_{i\to\infty} (1-s_i)\pksi(E_i,\O\setminus\O_\d)&\leq \limsup_{i\to\infty}(1-s_i)\left(\pksi(E_i,\O)-\pksi^1(E_i,\O_\d)\right)\\
        &\leq \limsup_{i\to\infty}(1-s_i)\left(\pksi(F_i,\O)-\pksi^1(F_i,\O_\d)\right)\\
        &=\limsup_{i\to\infty}(1-s_i)\left(\pksi^1(F_i,\O)-\pksi^1(F_i,\O_\d)+\pksi^2(F_i,\O)\right).
    \end{split}
\end{equation*}
Since $F_i\sub(\O^c\cup\O_\d)$ a direct computation shows that:
\begin{equation*}
    \limsup_{i\to\infty}(1-s_i)\left(\pksi^1(F_i,\O)-\pksi^1(F_i,\O_\d)\right)\leq \limsup_{i\to\infty}(1-s_i)\pksi^1(\O\setminus\O_\d,\O).
\end{equation*}
Moreover, recalling property $\eqref{h2}$ of the kernels $k_s(\cdot)$:
\begin{equation*}
    \limsup_{i\to\infty}(1-s_i)\pksi^1(\O\setminus\O_\d,\O)\leq c \cdot \limsup_{i\to\infty}(1-s_i)P^1_{s_i}(\O\setminus\O_\d,\O).
\end{equation*}
As we pointed out in the previous sections (see proof of theorem \ref{equicoerc}), the following relation between fractional perimeters and fractional Gagliardo semi-norms holds:
\begin{equation*}
    2 P^1_{s_i}(\O\setminus\O_\d,\O)=\F_{s_i}(\chi_{\O\setminus\O_\d},\O).
\end{equation*}
Thus, applying the estimate provided in \cite[Proposition 16]{Ambrosio} we get:
\begin{equation*}
    \begin{split}
        \limsup_{i\to\infty}(1-s_i)\left(\pksi^1(F_i,\O)-\pksi^1(F_i,\O_\d)\right)&\leq c \cdot \limsup_{i\to\infty}(1-s_i)P^1_{s_i}(\O\setminus\O_\d,\O)\\
        &=\frac{c}{2}\limsup_{i\to\infty} (1-s_i)\F_{s_i}(\chi_{\O\setminus\O_\d},\O)\\
        &\leq \frac{cn\o_n P(\O\setminus\O_\d,\R^n)}{2}.
    \end{split}
\end{equation*}
Arguing in a similar way we also get:
\begin{equation*}
    \begin{split}
        \limsup_{i\to\infty}(1-s_i)\pksi(F_i,\O)&\leq \limsup_{i\to\infty}(1-s_i)\pksi(\O,\R^n)\\
        &\leq c\cdot \limsup_{i\to\infty}(1-s_i)P_{s_i}^2(\O,\R^n)\\
        &\leq \frac{c\o_{n-1}P(\O,\R^n)}{2}.
    \end{split}
\end{equation*}
Thus, \eqref{minbound} is proved for every $\O'\sub (\O\setminus \O_\d)$, and then, since $\d>0$ is arbitrary, for every $\O'\Subset\O$.\\
\textit{Step 2.} Let us define for every index $i$ and every $A\sub\O$ open:
\begin{equation}
    \a_i(A)\coloneq(1-s_i)\pksi^1(E_i,A).
\end{equation}
Let us extend each $\a_i(\cdot)$ to every $F\sub\O$ in the following way:
\begin{equation}
    \a_i(F)\coloneq \inf\{\a_i(A):F\sub A\sub \O,\textit{ A open.}\}
\end{equation}
Every $\a_i(\cdot)$ defined this way is a regular monotone set function on $\mathcal{P}(\O)$ and by its definition is super-additive on disjoint open sets.
By \eqref{minbound} and Theorem \ref{Letta}, up to extracting a subsequence we have that $(\a_i)_i$ weakly converges in the sense of monotone set functions to a regular monotone set function $\a$ on $\mathcal{P}(\O)$, which is also super-additive on disjoint open sets.
We will now prove that for ever ball $B(x,R)\Subset\O$, $r>0$ such that $\a(\partial B(x,R))=0$ we have that $E$ is a local minimizer for $\pzbk(\cdot, B(x,R))$ and:
\begin{equation}
    \lim_{i\to\infty}(1-s_i)\pksi(E_i,B(x,\R))=\pzbk(E,B(x,R)).
\end{equation}
Let us fix $x\in\O$ and call for simplicty $B(x,R)\coloneq B_R$. Let $F\sub\O$ be a measurable set such that $E\Delta F\Subset B_r$. Choose $0<r<R$ such that
$E\Delta F\Subset B_R$. By the limsup inequality \eqref{limsup} we may find a sequence of measurable sets $F_i\to F$ in $L^1(B_R)$ such that:
\begin{equation}
    \label{recoverybr}
    \limsup_{i\to\infty}(1-s_i)\pksi(F_i,B_R)\leq\pzbk(F,B_R).
\end{equation}
Using the gluing argument over the sets $E_i$ and $F_i$ (see Proposition \ref{gluing}), for any given $\r,t>0$ such that $r<\rho<t<R$ there exist a sequence of measurable sets $G_i\sub\R^n$ such that $G_i=E_i$ in $\R^n\setminus B_t$, $G_i=F_i$ in $B_\r$ and for all $\e>0$ it holds that:
\begin{equation}
    \label{minest1}
    \begin{split}
       \pksi^1(G_i,B_R)\leq &\pksi^1(F_i,B_R)+\pksi^1(E_i,B_R\setminus \overbar{B}_{\r-\e})+\frac{C}{\e^{n+s_i}}\\
       &+C\frac{\L^n((E_i\Delta F_i)\cap(B_t\setminus B_\r))}{(1-s_i)}+C\L^n((F_i\Delta E_i)\cap B_R).
    \end{split}
\end{equation}
By Theorem \ref{gammaliminf}, using the local minimality of $E_i$ we get:
\begin{equation}
    \label{minestmain}
   \begin{split}
       \pzbk(E,B_R)&\leq \liminf_{i\to\infty}(1-s_i)\pksi^1(E_i,B_R)\\
       &\leq \liminf_{i\to\infty}(1-s_i)\pksi(E_i,B_R)\\
       &\leq\liminf_{i\to\infty}(1-s_i)\pksi(G_i,B_R)\\
       &\leq\liminf_{i\to\infty}(1-s_i)\pksi^1(G_i,B_R)+\limsup_{i\to\infty}(1-s_i)\pksi^2(G_i,B_R).
   \end{split}
\end{equation}
We now estimate:
\begin{equation*}
    \limsup_{i\to\infty}(1-s_i)\pksi^2(G_i,B_R).
\end{equation*}
Let us write:
\begin{equation*}
    \begin{split}
        \pksi^2(G_i,B_\R)&=\int_{G_i\cap B_R}\int_{G_i^c\cap B_R^c}k_{s_i}(x-y)dxdy+\int_{G_i^c\cap B_R}\int_{G_i\cap B_R^c}k_{s_i}(x-y)dxdy\\
        &=I+II.
    \end{split}
\end{equation*}
As for $I$, keeping in mind that $E_i$ and $G_i$ agree outside $B_t$, we have that for every $R<R'<\mathrm{dist}(x,\partial\O)$:
\begin{equation*}
    \begin{split}
        I=&\int_{G_i\cap B_t}\int_{E_i^c\cap B_R^c}k_{s_i}(x-y)dxdy+\int_{E_i\cap(B_R\setminus B_t)}\int_{E_i^c\cap B_R^c}k_{s_i}(x-y)dxdy\\
        \leq & c\cdot \int_{G_i\cap B_t}\int_{E_i^c\cap B_R^c}\frac{1}{\abs{x-y}^{n+s_i}}dxdy+\int_{E_i\cap(B_R\setminus B_t)}\int_{E_i^c\cap B_R^c}k_{s_i}(x-y)dxdy\\
        \leq & c\cdot \frac{C\L^n(G_i\cap B_t)}{s_i(R-t)^{s_i}}+\int_{E_i\cap(B_R\setminus B_t)}\int_{E_i^c\cap(B_{R'}\setminus B_R)}k_{s_i}(x-y)dxdy\\
        &+ c\cdot \int_{E_i\cap(B_R\setminus B_t)}\int_{E_i^c\cap B^c_{R'}}\frac{1}{\abs{x-y}^{n+s_i}}dxdy\\
        \leq &\pksi^1(E_i,B_{R'}\setminus \overbar{B}_t)+ c\cdot \frac{C}{s_i}\left(\frac{1}{(R-t)^{s_i}}+\frac{1}{(R'-R)^{s_i}}\right)
    \end{split}.
\end{equation*}
Integral $II$ is estimated in the same way (it is enough to switch the roles of $G_i$ and $G_i^c$). Hence we obtain:
\begin{equation}
    \label{minest2}
    \limsup_{i\to\infty}(1-s_i)\pksi^2(G_i,B_R)\leq 2\limsup_{i\to\infty}\pksi^1(E_i,B_{R'}\setminus \overbar{B}_t).
\end{equation}
Using now estimates \eqref{minest1} and \eqref{minest2} we get from \eqref{minestmain} that:
\begin{equation}
    \begin{split}
        \pzbk(E,B_R)\leq & \liminf_{i\to\infty}(1-s_i)\pksi^1(F_i,B_R)+3\limsup_{i\to\infty}(1-s_i)\pksi^1(E_i,B_{R'}\setminus \overbar{B}_{\r-\e})\\
        &+C\lim_{i\to\infty}\L^n((E_i\Delta F_i)\cap(B_t\setminus B_\r)).
    \end{split}
\end{equation}
Since $E_i\to E$ and $F_i\to F$ in $L^1(B_R)$, and $E=F$ in $B_t\setminus B_\r$, we have that:
\begin{equation*}
    \lim_{i\to\infty}\L^n((E_i\Delta F_i)\cap(B_t\setminus B_\r))=0.
\end{equation*}
Moreover, since $\a(\partial B_R)=0$ passing to the limit as $R'\to R$, $\r\to R$ and $\e\to 0^+$ we obtain (see \cite[Proposition 22]{Ambrosio}):
\begin{equation*}
    \lim_{R',\r,\e}\limsup_{i\to\infty}(1-s_i)\pksi^1(E_i,B_{R'}\setminus \overbar{B}_{\r-\e})=\lim_{\d\to 0^+}\limsup_{i\to\infty}\a_i(B_{R+\d}\setminus\overbar{B}_{R_\d})=0.
\end{equation*}
Hence, it follows from \eqref{recoverybr} that:
\begin{equation}
    \label{locmin}
    \begin{split}
        \pzbk(E,B_R)& \leq \liminf_{i\to\infty}(1-s_i)\pksi^1(E_i,B_R)\\
        &\leq \liminf_{i\to\infty} (1-s_i)\pksi(E_i,B_R)\\
        &\leq \liminf_{i\to\infty}(1-s_i)\pksi^1(F_i,B_R)\\
        &\leq\limsup_{i\to\infty}(1-s_i)\pksi^1(F_i,B_R)=\pzbk(F,B_R).
        \end{split}
\end{equation}
Choosing now $F=E$, inequalities \eqref{minest1} , \eqref{minestmain} and \eqref{minest2} give that for every subsequence of indexes $(i_j)_j$:
\begin{equation*}
    \label{minconv}
    \pzbk(E,B_R)=\liminf_{j\to\infty}(1-s_{i_j})P_{k_{s_{i_j}}}^1(E_{i_j},B_R)=\liminf_{j\to\infty}(1-s_{i_j})P_{k_{s_{i_j}}}(E_{i_j},B_R).
\end{equation*}
Since we may extract a further subsequence that converges to the liminf, by Urysohn's principle for sequences we get:
\begin{equation}
    \pzbk(E,B_R)=\lim_{i\to\infty}(1-s_i)\pksi^1(E_i,B_R)=\lim_{i\to\infty}(1-s_i)\pksi(E_i,B_R).
\end{equation}
\textit{Step 3.} We first notice that we may repeat the same argument of the previous step for every $\O'\Subset\O$ with Lipschitz boundary and such that $\a(\partial \O')=0$. To do so, set for any small enough $\d>0$, $J_\d(\O')\coloneq \{x\in\O:\mathrm{dist}(x,\O')<\d\}$ and $J_{-\d}(\O')\coloneq\{x\in\O':\mathrm{dist}(x,\partial\O')>\d\}$. Then, it is enough to replace $B_{R+\d}$ with $J_\d(\O')$ and $B_{R-\d}$ with $J_{-\d}(\O')$.\\
In particular, we have that for any open set $\O'\Subset\O$ with Lipschitz boundary such that $\a(\partial \O')=0$:
\begin{equation}
     \pzbk(E,\O')=\lim_{i\to\infty}(1-s_i)\pksi^1(E_i,\O')=\lim_{i\to\infty}(1-s_i)\pksi(E_i,\O').
\end{equation}
Then, by definition of weak convergence of monotone set functions we have that:
\begin{equation}
    \label{alfaleq}
    \begin{split}
        \a(\O')&\leq \liminf_{i\to\infty}\a_i(\O')\\
        &=\liminf_{i\to\infty}(1-s_i)\pksi^1(E_i,\O)\\
        &=\lim_{i\to\infty}(1-s_i)\pksi^1(E_i,\O')\\
        &=\lim_{i\to\infty}(1-s_i)\pksi(E_i,\O')=\pzbk(E,\O').
    \end{split}
\end{equation}
Let now $A\sub\O$ be any open set and let $K\Subset A$ be a compact set. Let $V$ be an open set such that $K\Subset V\Subset A$. As a consequence of Morse-Sard lemma (\cite[Lemma 13.15, Theorem 13.8]{Maggi}) we may find an open set with smooth boundary $\Tilde{V}$ such that $K\Subset \Tilde{V}\Subset A$.
Since for $\e>0$ small enough the set of all $\d\in(-\e,\e)$ such that $\a(\partial J_\d(\Tilde{V}))>0$ is at most countable, we may assume that $\a(\partial\Tilde{V})=0$.
For such $\Tilde{V}$ it holds by \eqref{alfaleq} that:
\begin{equation*}
    \a(\Tilde{V})\leq\pzbk(E,\Tilde{V}).
\end{equation*}
Since both $\a(\cdot)$ and $\pzbk(E,\cdot)$ are regular monotone set functions, this implies that:
\begin{equation*}
    \a(A)\leq\pzbk(E,A).
\end{equation*}
As a consequence, again by regularity we have that for every subset $D\sub\O$:
\begin{equation*}
    \a(D)\leq\pzbk(E,D).
\end{equation*}
In particular for every $\O'\Subset\O$ with Lipschitz boundary such that $\pzbk(E,\partial\O)=0$ we also have that $\a(\partial \O)=0$.\\
To complete the prof it remains to show that $E$ is a local minimizer for $\pzbk(\cdot,\O).$ To do so it is enough to exploit an approximation argument analogous to the one used above. Let $F\sub\R^n$ be a measurable set such that $F\Delta E\Subset \O$. Let $\O_k\Subset \O$ be a sequence of open sets with Lipschitz boundary such that $\a(\partial \O_k)=0$ and $\O_k\Supset\{x\in\O:\mathrm{dist}(x,\partial \O)\geq 1/k\}$. For $k$ big enough it holds that:
\begin{equation*}
    \pzbk(E,\O_k)\leq\pzbk(F,\O_k),
\end{equation*}
which, by regularity, implies:
\begin{equation*}
    \pzbk(E,\O)\leq\pzbk(F,\O).
\end{equation*}
Hence, $E$ is a local minimizer for $\pzbk(\cdot,\O)$ and the proof is complete.  

\section{Acknowledgments}       
I would like to thank Matteo Focardi for his support and his precious insights and the Department of Mathematics and Information Technology "Ulisse Dini" of the University of Florence, in which a great part of this work was carried out. This research was partially funded
by the Austrian Science Fund (FWF) project 10.55776/F65. For
open-access purposes, the authors have applied a CC BY public copyright
license to any author-accepted manuscript version arising from this
submission.

\bibliographystyle{plain}
\bibliography{Results/bibliography}

\end{document}